\let\@internalcite\cite
\def\cite{\def\citeauthoryear##1##2{##1, ##2}\@internalcite}
\def\shortcite{\def\citeauthoryear##1{##2}\@internalcite}
\def\@biblabel#1{\def\citeauthoryear##1##2{##1, ##2}[#1]\hfill}
\newcommand{\R}{\mathbb{R}}
\newcommand{\N}{\mathbb{N}}
\newcommand{\E}{\mathbb{E}}
\newcommand{\M}{\mathcal{M}}
\newcommand{\A}{\mathcal{A}}
\DeclareMathOperator{\sign}{sign}
\newcommand\inp[2]{\langle #1, #2 \rangle}
\newcommand{\inv}{^{-1}}
\newcommand{\ord}{\mathcal{O}}
\newcommand{\thupper}{^{\text{th}}}
\newcommand{\p}[1]{\phi_{#1}}
\newcommand{\ddd}{\cdot\cdot\cdot}
\newtheorem{theorem}{Theorem}[section]
\newtheorem{prop}[theorem]{Proposition}
\theoremstyle{definition}
\newtheorem{definition}[theorem]{Definition}
\theoremstyle{definition}
\theoremstyle{definition}
\newtheorem{example}[theorem]{Example}
\theoremstyle{remark}
\newtheorem{remark}[theorem]{Remark}
\title{Diffusion Geometry}
\author{Iolo Jones \\ Durham University}
\date{July 2024}
\begin{document}

\maketitle

\begin{abstract}

We introduce diffusion geometry as a new framework for geometric and topological data analysis. Diffusion geometry uses the Bakry-Emery $\Gamma$-calculus of Markov diffusion operators to define objects from Riemannian geometry on a wide range of probability spaces. We construct statistical estimators for these objects from a sample of data, and so introduce a whole family of new methods for geometric data analysis and computational geometry. This includes vector fields and differential forms on the data, and many of the important operators in exterior calculus. Unlike existing methods like persistent homology and local principal component analysis, diffusion geometry is explicitly related to Riemannian geometry, and is significantly more robust to noise, significantly faster to compute, provides a richer topological description (like the cup product on cohomology), and is naturally vectorised for statistics and machine learning. We find that diffusion geometry outperforms multiparameter persistent homology as a biomarker for real and simulated tumour histology data and can robustly measure the manifold hypothesis by detecting singularities in manifold-like data.

\end{abstract}

\tableofcontents

\section{Introduction}

Many important problems in science and engineering are closely related to the study of \textit{shape}. 
This is especially true in physical sciences like biology, medicine, materials science, and geology, where the role or function of tissues, organs, materials, and crystals can be largely determined by their shape. 
The recent proliferation of better measures of this shape, such as more sensitive and accurate imagery, CT and MRI scanning, and microscopy, has led to a huge increase in the quality and quantity of \textit{geometric data}. The data have shape, and the shape has meaning. How can we measure it?

This work introduces \textit{diffusion geometry} as a new framework for geometric and topological data analysis. 
The main idea is to take constructions from the mathematical theory of Riemannian geometry (where notions like connectedness, length, perimeter, angles, and curvature are defined and well-understood) and define them on the probability space from which our data are sampled. 
We justify this with Figure \ref{fig:manifold_vs_prob}.

\begin{figure}[h!]
    \centering
    \includegraphics[width=0.62\textwidth]{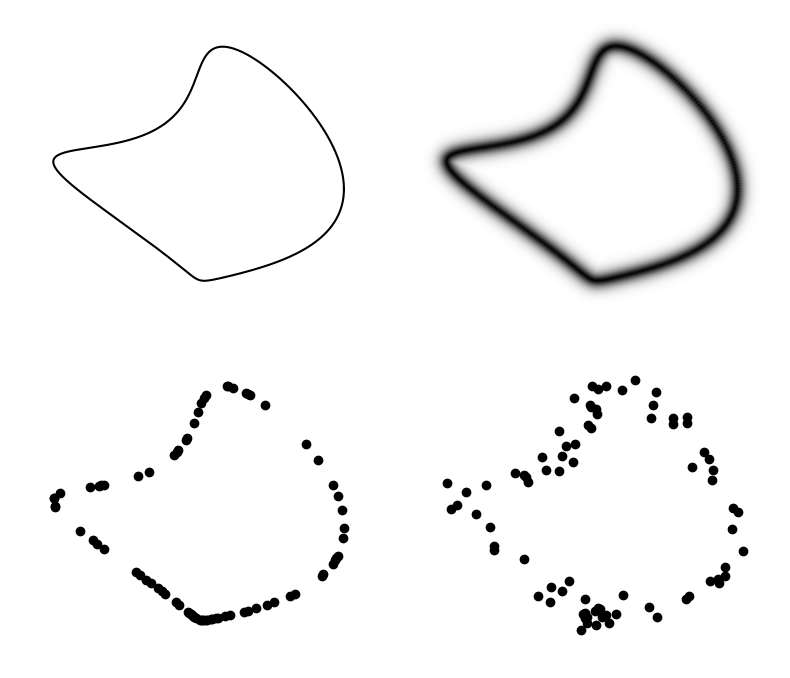}
    \caption{Why probability spaces?}
    \label{fig:manifold_vs_prob}
\end{figure}

In the top left of Figure \ref{fig:manifold_vs_prob} is a manifold, $\M$, whose shape is described by Riemannian geometry. Bottom left is a finite sample of points drawn from $\M$, in which we can clearly see much of the same geometry as $\M$. It stands to reason that we should be able to estimate these features statistically, given such a sample. Top right is a probability density function, $q$, taking large values near $\M$, and perhaps represents a \q{manifold with noise}. It is also clear that most of what can be said about the geometry of $\M$ can be said for the density $q$. Bottom right is a sample drawn from $q$, where again we recognise the same \q{geometry} as $q$, and hope to use this sample to construct estimators for it.

These observations lead to two questions:
\begin{enumerate}
    \item How much of the Riemannian geometry of a manifold is a special case of the \q{Riemannian geometry} of a probability space?
    \item How can the geometry of manifolds and probability spaces be estimated from finite samples of data?
\end{enumerate}
We propose diffusion geometry as a natural extension of Riemannian geometry to certain probability spaces. We show how to estimate it from data as a general framework for geometric and topological machine learning and statistics, as well as computational geometry and topology.
We now briefly highlight the main aspects of diffusion geometry (see Section \ref{section_comparison} for a full comparison with related work).

\subsection{Why define a new theory?}

Applied geometry and topology are often motivated by the idea that \q{data have shape}.
However, data come from probability spaces.
In this paper, we will argue that really it is the \textit{underlying probability space} that has shape.
We will use \textit{Markov diffusion operators}, which generalise the heat flow on a manifold, to develop a theory of Riemannian geometry on a broad class of probability spaces.

Crucially, this theory will lead to a natural statistical framework for estimating this geometry from data.
All the objects we define in the theory have corresponding algorithms that follow straight from the definitions.
While our theory does generalise a variety of existing approaches to metric and measure-theoretic geometry, the main motivation is that it leads \textit{directly} to computational tools for data.

\subsection{Machine learning and statistics}

There are centuries of work in differential and Riemannian geometry that can be exploited for geometric machine learning and statistics.
We can compute objects from this theory to create \textit{shape descriptors} for the data, known as \textit{feature vectors} in machine learning.
These encode a rich geometric representation (measuring things like curvature, connectedness, and the spectrum of the Hodge Laplacian), which can then be used for supervised and unsupervised learning.
As we will explore, these feature vectors are robust to noise and outliers, fast to compute, and explainable, so that the output of the final model can be interpreted and used to learn more about the data.

A fundamental observation of \textit{geometric deep learning} \cite{bronstein2021geometric} is that geometric machine learning models should be invariant under various group actions, such as translations and rotations of the input data.
Our feature vectors, and hence our models, will be invariant under translation, rotation, reflection, and, if required, also rescaling.

However, group invariance alone is not enough.
Geometry is also preserved by adding noise and outliers, and, perhaps, changing the sampling density of the data.
Models should therefore be both group invariant \textit{and} statistically invariant.
The diffusion geometry features are highly robust to noise, and, if required, can be made density-invariant as well.

We test diffusion geometry for supervised and unsupervised learning on tumour histology data and find that it outperforms existing methods.

\subsection{Computational and numerical geometry and topology}

When the data are drawn from a manifold, diffusion geometry agrees with the usual theory of Riemannian geometry.
As such, we can use it as a computational tool for investigating the properties of manifolds, and using optimisation to test examples and conjectures in Riemannian geometry.
This means we can view diffusion geometry as a novel tool for numerical geometry that, uniquely, only requires a sample of points from the manifold, which need not be uniform, and no triangulation.

Many applications of geometry and topology to data assume the \textit{manifold hypothesis}: that the data are drawn from a manifold.
We can also use diffusion geometry to test this hypothesis and develop tools for finding singularities in otherwise manifold-like data.

\subsection{How to read this paper}
The theory of diffusion geometry is developed in Sections
\begin{enumerate}
    \item[(\ref{section_markov})] where we show that Markov diffusion operators give a suitable generalisation of the Laplacian operator on a manifold,
    \item[(\ref{section_theory})] where we define an analogue of Riemannian geometry using this alternative Laplacian, and
    \item[(\ref{section_estimation})] where we show how to estimate this geometry from data.
\end{enumerate}

\textbf{If you are mainly interested in using diffusion geometry for data analysis} then read Sections
\begin{enumerate}
    \item[(\ref{section_ml})] where we test diffusion geometry as a tool for computational geometry and topology, machine learning, and statistics,
    \item[(\ref{section_complexity})] where we test the computational complexity of diffusion geometry, and
    \item[(\ref{section_comparison})] where we review existing methods and compare them with diffusion geometry.
\end{enumerate}

\section{Markov Diffusion Operators and Markov Triples}\label{section_markov}

Much of the Riemannian geometry of a manifold can be described in terms of its Laplacian operator $\Delta$, via the \q{carré du champ} identity
\begin{equation}\label{cdc identity}
\frac{1}{2} \big( f \Delta h + h \Delta f - \Delta (fh) \big) = g(df,dh),
\end{equation}
which gives a formula for the Riemannian metric of 1-forms $df$ and $dh$ in terms of $\Delta$. 
The main idea behind diffusion geometry is to replace the manifold Laplacian $\Delta$ with some other operator $L$ on a more general space so that (\ref{cdc identity}) can become a \textit{definition} for Riemannian geometry on that space. 
In this section, we will propose the use of \textit{Markov diffusion operators} as an appropriate generalisation of the Laplacian.

\subsection{Examples of Markov diffusion operators}

The theory of Markov diffusion operators is very well developed, and we will only state the essential properties here (refer to \cite{bakry2014analysis} for a comprehensive reference). As a motivating example, we consider the heat diffusion in the Euclidean space $\R^n$. The \textit{Gaussian kernel}
\begin{equation}
\label{heat kernel eq}
p_t(x,y) = \frac{1}{(4 \pi t)^{n/2}} \exp\Big(- \frac{\| x-y\|^2}{4t} \Big)
\end{equation}
can be used to define the \textit{heat diffusion operator} $P_t$ as
$$
P_t f(x) = \int p_t(x,y) f(y) dy.
$$
Setting $u(t,x) = P_t f(x)$ then solves the \textit{heat equation}
$$
\frac{\partial u}{\partial t} = \Delta u 
\qquad
u(0, x) = f(x).
$$
So if $f$ is an initial distribution of heat in space, then $P_t f$ is the heat distribution after $t$ time, and $\Delta$ measures the rate of diffusion. 
The operator $P_t$ satisfies
\begin{equation}\label{inf generator}
\lim_{t\rightarrow 0} \frac{P_t f - f}{t} = -\Delta f,
\end{equation}
in $L^2(\R^n)$, so we can interpret the Laplacian $\Delta$ as the derivative of the diffusion operator $P_t$ at $t=0$. 
In the language of Markov diffusion operators, we say that $(P_t)_{t\geq 0}$ is a \textit{Markov semigroup} and $\Delta$ is its \textit{infinitesimal generator}.
By \q{semigroup} we mean that $P_t$ is an operator acting on a function space, and $P_t \circ P_s = P_{t+s}$, and $P_0 = \text{Id}$. 
A semigroup has an infinitesimal generator $L$ defined as the limit in (\ref{inf generator}).
We say that $L$ \q{generates} $P_t$, meaning $P_t = \exp(-tL)$ in the sense of the spectral mapping theorem.
Equivalently, if $\phi$ is an eigenfunction of $L$ with eigenvalue $\lambda$, then it is also an eigenfunction of $P_t$ with eigenvalue $\exp(-t\lambda)$.
There are many important examples of semigroups.

\begin{example}
If $\M$ is a Riemannian manifold, then the Laplacian $\Delta$ defines a heat equation as above, and the associated heat diffusion operator $P_t$ is a semigroup. 
The infinitesimal generator is the Laplacian. 
This is the central example that motivates the rest of this work.
\end{example}

\begin{example}\label{eg weighted manifold}
If $\M$ is a Riemannian manifold with a density $q(x)$, known as a \textit{weighted manifold}, then the heat kernel can be used to define a weighted heat diffusion operator
$$
P_t f(x) = \int p_t(x,y) f(y) q(y) dy.
$$
The operator $P_t$ is also a semigroup, and its infinitesimal generator is given by
$$
\Delta_q = \Delta -  \frac{\Delta q}{q}.
$$
Weighted manifolds will also be an important source of motivation for this work, and are well explained in \cite{grigor2006heat}.
\end{example}

\begin{example}
If $(X_t)_{t\geq0}$ is a Markov process on a measurable space $E$, then
$$
P_t f(x) = \E(f(X_t) : X_0 = x)
$$
is a semigroup acting on some space of measurable functions. 
The two above examples are a special case of this one, corresponding to $X_t$ being a standard or weighted Brownian motion.
If $E$ is finite (or countable) then $P_t$ is called a \textit{Markov chain}.
\end{example}

These examples are all specifically \textit{Markov} semigroups: see Sections 1 and 2 of \cite{bakry2014analysis} for a formal definition and many other examples. 
The important point here is that the Laplacian on a manifold is a special example of an infinitesimal generator of a Markov semigroup.

\subsection{Markov triples}

Markov semigroups are often analysed indirectly through their associated \textit{carré du champ operators}, which are inspired by the carré du champ identity (\ref{cdc identity}).

\begin{definition}
If $L$ is the infinitesimal generator of a Markov semigroup defined on a function space $\A$, the bilinear map $\A \times \A \rightarrow \A$ given by
$$
\Gamma(f,h) = \frac{1}{2} \big( f L h + h L f - L (fh) \big)
$$
is called the \textbf{carré du champ operator}.
\end{definition}

So if $L = \Delta$ on a manifold, then $\Gamma(f,h) = g(df,dh)$. 
In the next section, we will use this property of the carré du champ $\Gamma$ to generalise manifold geometry: it will act in place of the metric where none exists.
The general setup we require is defined as follows (where a \q{good} measurable space is subject to loose but technical conditions explained in \cite{bakry2014analysis}, Section 1.1).

\begin{definition}\label{markov triple def}
A \textbf{Markov triple} $(E, \mu, \Gamma)$ consists of
\begin{enumerate}
    \item a \q{good} measurable space $(E, \mathcal{F})$ (where $E$ is a set with $\sigma$-algebra $\mathcal{F}$) with a $\sigma$-finite measure $\mu$,
    \item an algebra of real-valued bounded functions $\A$ that is dense in all the $L^p(E)$ spaces, and
    \item a symmetric bilinear map $\Gamma: \A \times \A \rightarrow \A$ (the carré du champ operator),
\end{enumerate}
such that
\begin{enumerate}
    \item[(a)] $\Gamma(f,f) \geq 0$ for all $f \in \A$,
    \item[(b)] the fundamental identity
    $$
    \int_E \Gamma(h, f^2) d\mu + 2\int_E h \Gamma(f,f) d\mu = 2 \int_E \Gamma(fh, f) d\mu
    $$
    holds for all $f,h \in \A$,
    \item[(c)] there exists an increasing sequence of functions in $\A$ converging $\mu$-almost everywhere to the constant function $1$.
\end{enumerate}
\end{definition}

Condition (a) means that $\Gamma$ is pointwise positive definite, so behaves like a metric.
We are only interested in probability spaces, so always normalise $\mu(E) = 1$.
The following canonical example justifies the rest of this work.

\begin{example}
If $\M$ is a compact manifold with Riemannian metric $g$ and induced Riemannian volume form $dx$, and $\Gamma(f,h) = g(df, dh)$ then $(\M, dx, \Gamma)$ is a Markov triple with $\A = C^\infty(\M)$. 
When we say that a Markov triple \q{is a manifold}, this is what we mean.
\end{example}

This is a special example of a very general class of Markov triples arising from Markov semigroups.

\begin{example}
Let $(X_t)_{t\geq0}$ be a Markov process on a measurable space $E$, where the Markov semigroup
$$
P_t f(x) = \E(f(X_t) : X_0 = x)
$$
acts on a space $\A$ of measurable functions satisfying the conditions in Definition \ref{markov triple def}.
Suppose that $\mu$ is an \textit{invariant measure} for $P_t$, meaning that $\int_E P_t(f) d\mu = \int_E f d\mu$ for all $f \in L^1(\mu)$, and let $\Gamma$ be the carré du champ of $P_t$.
Then $(E, \mu, \Gamma)$ is a Markov triple.
The previous example is a special case of this one, corresponding to $X_t$ being a Brownian motion on a manifold.
\end{example}

\begin{table}[h]
    \centering
    \begin{tabular}{c|c}
        \textbf{Manifolds} & \textbf{Markov triples}  \\
        \hline
        $\M$ & $E$ \\
        $dx$ & $\mu$ \\
        $C^\infty(\M)$ & $\A$ \\
        heat diffusion operator & Markov semigroup \\
        $\Delta$ & infinitessimal generator $L$ \\
        $g(df,dh)$ & $\Gamma(f,h)$ \\
    \end{tabular}
    \label{tab:my_label}
\end{table}

This class includes many classical examples (see \cite{bakry2014analysis} chapter 2).
The important idea is that Markov triples (corresponding to Markov semigroups) generalise Riemannian manifolds (corresponding to heat diffusion operators).
We can use them to define geometry on a broad class of measure spaces.
For example, a probability density function on $\R^n$ (such as in Figure \ref{fig:manifold_vs_prob}) defines a weighted heat diffusion (Example \ref{eg weighted manifold}) that defines the appropriate geometry.
We especially note the case where $E$ is finite, so $P_t$ is a Markov chain.

\begin{example}
Let $X$ be a finite set with $\sigma$-algebra given by all its subsets, and $\A$ be the space of real-valued functions on $X$ (so if $|X| = n$, we can identify $\A \cong \R^n$ with component-wise multiplication).
Let $P$ be a Markov chain on $X$ (i.e. a stochastic matrix), with stationary distribution $\mu$.
We can form a \textit{weighted graph Laplacian} from $P$ as $I - P$, and use it to define a carré du champ $\Gamma$.
Then $(X,\mu,\Gamma)$ is a Markov triple (see \cite{bakry2014analysis} Section 1.9.1).
To be explicit, if $L$ is the generator of $P_t$ and $x_i \in E$, we get the formula
$$
\Gamma(f,h)(x_i) = -\sum_{j=1}^n L(i,j)\big(f_i-f_j\big)\big(h_i-h_j\big).
$$
\end{example}

Suppose $M = (E, \mu, \Gamma)$ is a Markov triple arising from a Markov semigroup $P_t$, and $X \subset E$ is a finite sample of data.
As explained in Section \ref{section_estimation}, we can construct a finite Markov triple for $X$ that approximates $M$ by constructing a Markov chain on $X$ that approximates $P_t$.

We will consider the special class of Markov triples called \textit{diffusion} Markov triples, where the carré du champ satisfies an analogue of the Leibniz rule (when the Markov triple is a manifold, the diffusion property is equivalent to the Leibniz rule for $d$).

\begin{definition}\label{diffusion property}
Let $(E, \mu, \Gamma)$ be a Markov triple with function algebra $\A$.
Suppose $\A$ is closed under composition with smooth functions $\R^n \rightarrow \R$ that vanish at $0$, so if $f_1,...,f_n \in \A$ and $\phi:\R^n \rightarrow \R$ then $\phi(f_1,...,f_n) \in \A$.
The carré du champ operator $\Gamma$ has the \textbf{diffusion property} if
$$
\Gamma(\phi(f_1,...,f_n), h) = \sum_{i=1}^n \partial_i \phi(f_1,...,f_n) \Gamma(f_i,h)
$$
for all $f_1,...,f_n,h \in \A$ and smooth $\phi:\R^n \rightarrow \R$ vanishing at 0.
The Markov triple is then called a \textbf{diffusion Markov triple}.
\end{definition}

\section{Theory of Diffusion Geometry}\label{section_theory}

We can now construct a theory of Riemannian geometry using only the properties of diffusion Markov triples, which we will term \q{diffusion geometry}. 
The approach is essentially algebraic: we may not have the usual notions of tangent and cotangent bundles, sections, or connections, but we can instead use the various algebraic identities between them (like the carré du champ) as alternative definitions.
This section is expository, and all proofs are deferred to Appendix A (Section \ref{appendix proofs}).
We illustrate the definitions and concepts with examples of Markov triples corresponding to density functions on $\R^2$, which we approximate by finite data (this computation is explained in Section \ref{section_estimation}).
The Python code for all the examples is available at \url{https://github.com/Iolo-Jones/DiffusionGeometry}.

In this section, we will cover
\begin{itemize}
    \item[(\ref{subsec differential forms})] differential forms and the exterior algebra,
    \item[(\ref{subsec first order})] first-order calculus: vector fields and their action on functions,
    \item[(\ref{subsec second order})] second-order calculus: the Hessian (second derivative), covariant derivative, and Lie bracket,
    \item[(\ref{subsec ext derivative})] the exterior derivative and codifferential on forms,
    \item[(\ref{theory_cohomology_section})] differential algebra: differential operators, de Rham cohomology, and the Hodge Laplacian, and
    \item[(\ref{subsec third order})] third-order calculus: the second covariant derivative and curvature.
\end{itemize}

\subsection{Differential forms}
\label{subsec differential forms}

We start by defining differential forms on a diffusion Markov triple $M = (E, \mu, \Gamma)$, with function algebra $\A$.
While we do not have a cotangent bundle on Markov triples, we observe that, on a manifold $\M$,
$$
\Omega^1(\M) = \text{span}\{ f dh : f,h \in C^\infty(\M) \}.
$$
Using this, along with the fact that we expect $\Gamma(f,h) = g(df, dh)$, we make the following definition.

\begin{definition}
Define a (possibly degenerate) inner product $\inp{\cdot}{\cdot}$ on the tensor product $\A \otimes \A$ by
$$
\inp{f \otimes h}{f' \otimes h'} := \int ff' \Gamma(h, h') d\mu.
$$
Let $D^1 = \{\omega \in \A \otimes \A : \inp{\omega}{\omega} = 0\}$ be the vector subspace on which $\inp{\cdot}{\cdot}$ is degenerate. We define the \textbf{space of differential 1-forms} as the quotient space
$$
\Omega^1(M) := \frac{\A \otimes \A}{D^1}
$$
with the inner product $\inp{\cdot}{\cdot}$ and induced norm $\|\cdot\|$, and where the equivalence classes $[f \otimes h]$ are denoted $f dh$.
\end{definition}

Although this definition of forms is purely formal, we hope to recover enough of the geometry by imposing the right inner product structure on it.
In other words, we want to construct \q{manifold-free} Riemannian geometry using only the metric: even though the cotangent bundle does not exist (so we cannot define forms as its sections), the space of forms \textit{as an inner product space} is somehow more general.
We take a similar approach for the spaces of higher order $k$-forms. 
On a manifold $\M$, their metric is given by
$$
g(\alpha_1 \wedge \ddd \wedge \alpha_k, \beta_1 \wedge \ddd \wedge \beta_k) = \det(g(\alpha_i, \beta_j))
$$
which suggests the following definition.
\begin{definition}
Define a (possibly degenerate) inner product $\inp{\cdot}{\cdot}$ on $\A \otimes \bigwedge^k \A$ by
$$
\inp{f \otimes ( h_1 \wedge \ddd \wedge h_k)}{f' \otimes ( h_1' \wedge \ddd \wedge h_k')} := \int ff' \det(\Gamma(h_i, h_j')) d\mu.
$$
Let $D^k = \{\omega \in \A \otimes \bigwedge^k \A : \inp{\omega}{\omega} = 0\}$ be the vector subspace on which $\inp{\cdot}{\cdot}$ is degenerate. We define the \textbf{space of differential $k$-forms} as the quotient space
$$
\Omega^k(M) := \frac{\A \otimes \bigwedge^k \A}{D^k}
$$
with the inner product $\inp{\cdot}{\cdot}$ and induced norm $\|\cdot\|$, and where the equivalence classes $[f \otimes ( h_1 \wedge \ddd \wedge h_k)]$ are denoted $f dh_1 \wedge \ddd \wedge dh_k$.
\end{definition}

Notice that $\Omega^k(M)$ is an $\A$-module, and that $\Omega^0(M) = \A$ with the inner product from $L^2(M)$.
We can define operators between these spaces of forms by defining them on the space $\A \otimes \bigwedge^k \A$.
To check that they are well defined, we need to check that they are zero on forms of zero norm, and so descend to the quotient $\Omega^k(M)$.

\begin{figure}[!ht]
  \centering
  \captionsetup{width=0.66\linewidth}
  \subfloat[][$\alpha$]{\includegraphics[width=.48\textwidth]{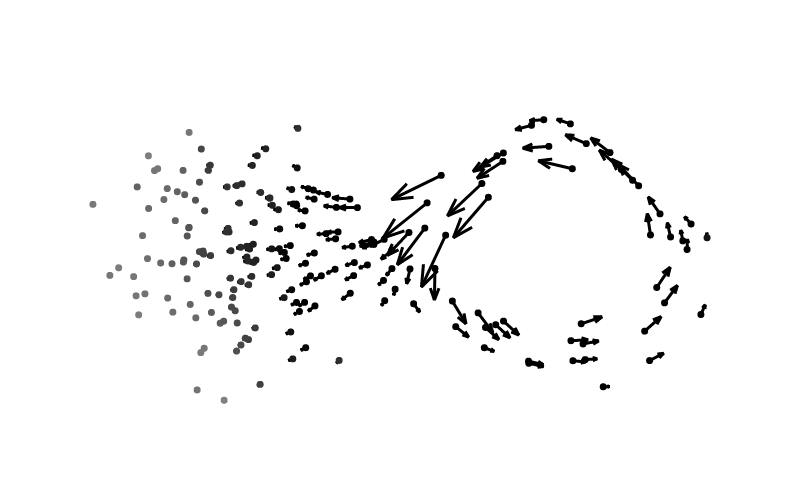}}\quad
  \subfloat[][$g(\alpha,\alpha)$]{\includegraphics[width=.48\textwidth]{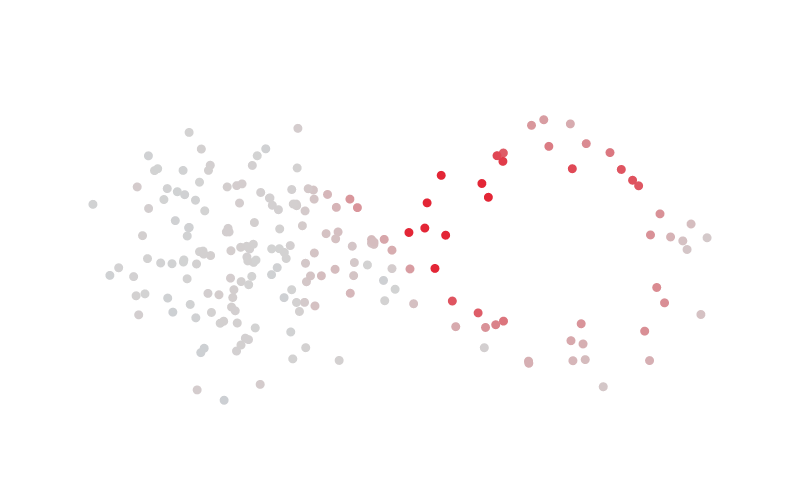}}\\
  \subfloat[][$\beta$]{\includegraphics[width=.48\textwidth]{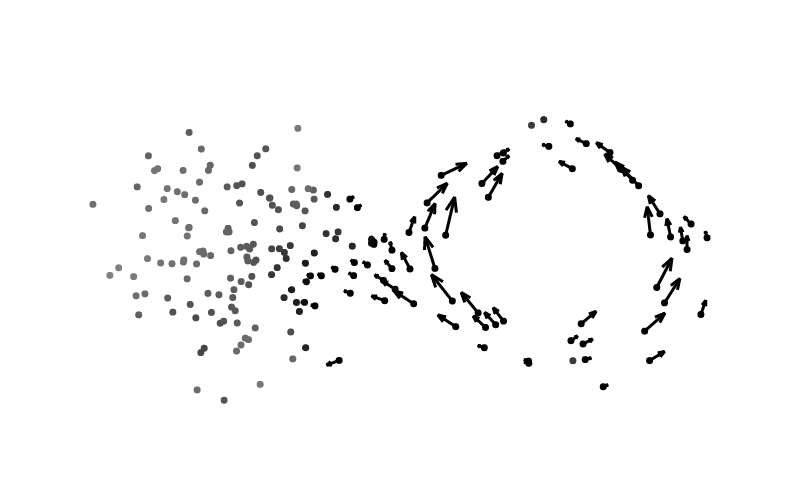}}\quad
  \subfloat[][$g(\alpha,\beta)$]{\includegraphics[width=.48\textwidth]{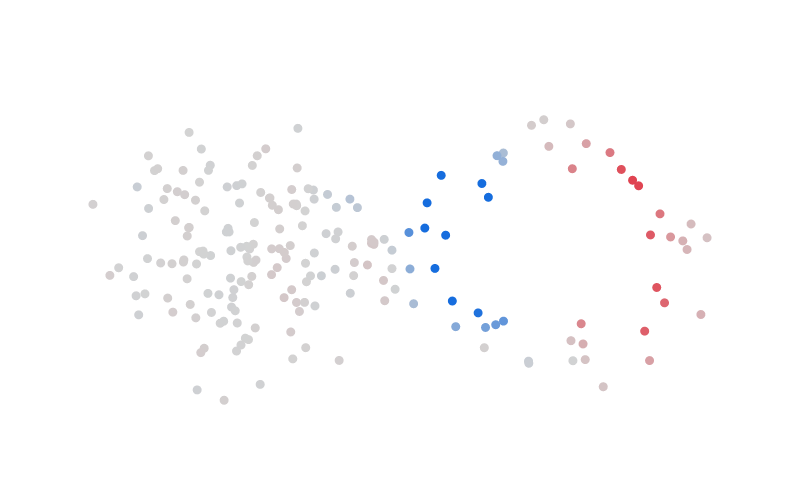}}\\
  \subfloat[][$\gamma$]{\includegraphics[width=.48\textwidth]{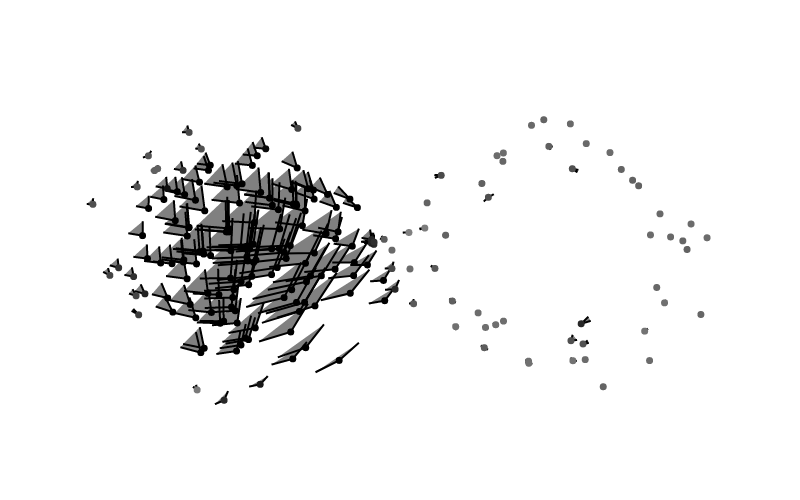}}\quad
  \subfloat[][$g(\gamma,\gamma)$]{\includegraphics[width=.48\textwidth]{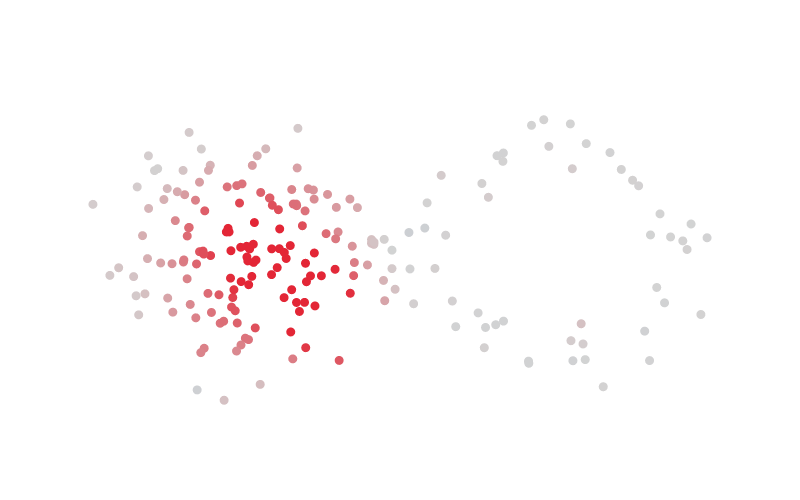}}
  \caption{\textbf{Forms and the metric.}
  The 1-forms $\alpha$ and $\beta$ are represented by their dual vector fields (a, c) (see Remark \ref{visualisation remark}), and the 2-form $\gamma$ is (non-uniquely) represented by its dual bi-vector field (e). 
  $g(\alpha,\alpha)$ and $g(\gamma,\gamma)$ measure the supports of $\alpha$ and $\gamma$ (b, f), and $g(\alpha,\beta)$ measures the alignment of $\alpha$ and $\beta$ (d). 
  Negative values are blue, and positive values are madder rose.
  }
  \label{fig:f2}
\end{figure}

We visualise forms in Figure \ref{fig:f2} using their dual vector fields (explained in Remark \ref{visualisation remark}), as well as the metric which we now define.

\begin{definition}
The \textbf{metric} is the bilinear function $g:\Omega^k(M) \times \Omega^k(M) \rightarrow \A$ given by
$$
g(f dh_1 \wedge \ddd \wedge dh_k, f' dh_1' \wedge \ddd \wedge dh_k') := ff' \det(\Gamma(h_i, h_j')).
$$
\end{definition}

Note that $g(\alpha, \alpha') \in \A$ because the determinant is a polynomial (and $\A$ is closed under multiplication), and the tensor product $\A \otimes \bigwedge^k \A$ comprises \textit{finite} linear combinations of the irreducible elements on which $g$ is defined, and so $g(\alpha, \alpha')$ is just a finite sum of functions in $\A$.
We then see that
$$
\inp{\alpha}{\beta} = \int g(\alpha, \beta) d\mu < \infty
$$
for all $\alpha, \beta \in \Omega^k(M)$, as functions in $\A$ have finite integral, so the inner product is finite.
We now verify that the metric satisfies all the properties we would like, so the notion of differential forms introduced above is well defined.

\begin{restatable}{reprop}{positivemetric}
\label{positive_metric}
The metric $g$ on $\Omega^k(M)$ is symmetric, bilinear, and positive semi-definite. In particular, it satisfies the Cauchy-Schwarz inequality pointwise, and the inner product $\inp{\cdot}{\cdot}$ and metric $g$ on $\Omega^k(M)$ are well defined.
\end{restatable}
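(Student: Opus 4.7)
The plan is to make pointwise positive semi-definiteness of $g$ the central step, from which symmetry, the Cauchy-Schwarz inequality, and well-definedness on the quotient all follow by standard arguments.

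First I would verify that the defining formula descends from simple tensors to a bilinear map $g : (\A \otimes \bigwedge^k \A)^2 \to \A$. Bilinearity in the coefficient functions $f,f'$ is immediate; multilinearity in each $h_i$ and $h_j'$ comes from the multilinearity of the determinant in its rows and columns combined with the bilinearity of $\Gamma$; and the required alternating property in the $h_i$'s and the $h_j'$'s is inherited from the determinant, since swapping two $h_i$'s swaps two rows of the Gram matrix. Symmetry $g(\alpha, \beta) = g(\beta, \alpha)$ then follows from the symmetry of $\Gamma$ together with $\det A = \det A^T$ applied to the Gram matrix $\Gamma(h_i, h_j')$.

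The key step is pointwise positive semi-definiteness: $g(\alpha, \alpha)(x) \geq 0$ for every $\alpha \in \A \otimes \bigwedge^k \A$ and every $x \in E$. Fix $x$ and regard $B_x(h, h') := \Gamma(h, h')(x)$ as a real symmetric bilinear form on $\A$, which is positive semi-definite by property (a) in Definition \ref{markov triple def}. If $\alpha = \sum_l f_l \otimes (h_1^l \wedge \ddd \wedge h_k^l)$, then setting $\omega_x := \sum_l f_l(x)\, h_1^l \wedge \ddd \wedge h_k^l \in \bigwedge^k \A$ one has $g(\alpha, \alpha)(x) = B_x^{\wedge k}(\omega_x, \omega_x)$, where $B_x^{\wedge k}$ is the Gram-determinant extension of $B_x$ to $\bigwedge^k \A$. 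The classical fact that the $k$-th exterior power of a positive semi-definite symmetric bilinear form is again positive semi-definite then yields the claim. To establish that fact, one quotients $\A$ by the radical $\{h : B_x(h,h) = 0\}$ (which coincides with the null space of $B_x$ by the scalar Cauchy-Schwarz inequality for $B_x$), chooses any $B_x$-orthonormal family in the quotient, and observes that the corresponding simple wedges are orthonormal under the induced form on $\bigwedge^k$ of the quotient, so $B_x^{\wedge k}$ is positive definite there and hence positive semi-definite on all of $\bigwedge^k \A$.

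Given symmetry and pointwise non-negativity, the pointwise Cauchy-Schwarz inequality $g(\alpha, \beta)(x)^2 \leq g(\alpha, \alpha)(x)\, g(\beta, \beta)(x)$ follows from the usual discriminant argument applied to the non-negative quadratic $t \mapsto g(\alpha + t\beta, \alpha + t\beta)(x)$. Well-definedness on the quotient then comes for free: if $\alpha \in D^k$ then $\int g(\alpha, \alpha)\, d\mu = 0$ together with pointwise non-negativity forces $g(\alpha, \alpha) = 0$ $\mu$-almost everywhere, so pointwise Cauchy-Schwarz gives $g(\alpha, \beta) = 0$ $\mu$-almost everywhere and hence $\inp{\alpha}{\beta} = 0$ for every $\beta$. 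Thus $D^k$ is the radical of $\inp{\cdot}{\cdot}$, the inner product descends to a non-degenerate inner product on $\Omega^k(M)$, and $g$ descends up to $\mu$-almost everywhere equivalence. The main obstacle is proving pointwise positive semi-definiteness for non-simple elements; this requires the wedge-power fact above, and everything else is essentially formal.
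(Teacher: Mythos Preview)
Your proposal is correct and takes essentially the same approach as the paper: both fix a point $x$, view $\Gamma(\cdot,\cdot)(x)$ as a positive semi-definite bilinear form, and reduce positivity of $g(\alpha,\alpha)(x)$ to the fact that the $k$-th exterior power of a positive semi-definite form is again positive semi-definite, proved by orthonormalising modulo the radical. The paper carries this out more explicitly by first restricting to the finite-dimensional span $A_\alpha$ of the functions appearing in $\alpha$ before doing Gram--Schmidt and computing the determinants directly, whereas you package the same step as an abstract lemma; the deductions of pointwise Cauchy--Schwarz and well-definedness on the quotient are identical.
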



We abbreviate $1 dh_1 \wedge \ddd \wedge dh_k$ to $dh_1 \wedge \ddd \wedge dh_k$.
Using these definitions for differential forms, we can collect other objects from Riemannian geometry for our theory (see Figure \ref{fig:f3}).

\begin{definition}
The \textbf{wedge product} is the bilinear map $\wedge : \Omega^k(M) \times \Omega^l(M) \rightarrow \Omega^{k+l}(M)$ defined by
$$
(f dh_1 \wedge \ddd \wedge dh_k, f' dh_1' \wedge \ddd \wedge dh_l') \mapsto ff' dh_1 \wedge \ddd \wedge dh_k \wedge dh_1' \wedge \ddd \wedge dh_l'.
$$
\end{definition}

The wedge product induces an isometric isomorphism $\Omega^k(M) \cong \bigwedge^k_\A\Omega^1(M)$ as an $\A$-tensor product of $\A$-modules.

\begin{restatable}{reprop}{wedgewelldefined}
\label{wedge_well_defined}
If $\alpha \in \Omega^k(M)$ and $\beta \in \Omega^l(M)$ then $g(\alpha\wedge\beta, \alpha\wedge\beta) \leq g(\alpha,\alpha)g(\beta,\beta)$. In particular, the wedge product is well defined on $\Omega^k(M) \times \Omega^l(M)$, and is a bounded linear operator in each argument.
\end{restatable}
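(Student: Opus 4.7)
The plan is to prove the inequality pointwise on $E$ by embedding everything at a point into a Hilbert-space exterior algebra, and then to read off the two ``in particular'' consequences as formal corollaries. Both sides of the displayed inequality lie in $\A$, so it amounts to a pointwise inequality of real numbers.

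Fix $x \in E$ and consider the bilinear form $\Gamma_x(f,h) := \Gamma(f,h)(x)$ on $\A$; this is symmetric (from the defining formula for $\Gamma$) and positive semidefinite by axiom (a) of Definition \ref{markov triple def}. Quotienting $\A$ by its null space equips $H_x := \A / \{f : \Gamma_x(f,f) = 0\}$ with a genuine inner product. The assignment $f\,dh_1 \wedge \cdots \wedge dh_k \mapsto f(x)\, [h_1]_x \wedge \cdots \wedge [h_k]_x$ extends linearly to $\pi_x : \A \otimes \bigwedge^\bullet \A \rightarrow \bigwedge^\bullet H_x$, and two routine checks show (i) $g(\alpha, \alpha')(x) = \langle \pi_x(\alpha), \pi_x(\alpha') \rangle$ under the determinant inner product on $\bigwedge^\bullet H_x$, and (ii) $\pi_x(\alpha \wedge \beta) = \pi_x(\alpha) \wedge \pi_x(\beta)$. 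So the proposition reduces to the purely algebraic statement
\[
\|u \wedge v\|^2 \leq \|u\|^2 \, \|v\|^2
\]
for $u \in \bigwedge^k H_x$ and $v \in \bigwedge^l H_x$ in any real inner-product space.

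For simple multivectors this is Fischer's inequality: $\|u_1 \wedge \cdots \wedge u_k \wedge v_1 \wedge \cdots \wedge v_l\|^2$ is the determinant of a positive semidefinite block Gram matrix whose diagonal blocks have determinants $\|u\|^2$ and $\|v\|^2$, and such a determinant is bounded by the product of the diagonal-block determinants. The general case would then follow by expanding $u$ and $v$ in an orthonormal basis of the finite-dimensional subspace they span and tracking the shuffle expansion of $u \wedge v$, or equivalently by bounding the operator norm of the left-wedge operator $L_v : \bigwedge^k H_x \to \bigwedge^{k+l} H_x$ through its adjoint (interior multiplication by $v$). I expect this step---extending the simple-wedge bound to arbitrary sums---to be the main technical obstacle, since a naive Cauchy--Schwarz on the shuffle expansion only delivers a combinatorial factor $\binom{k+l}{k}$, and extracting the sharp constant requires using the sign cancellations in the shuffle product.

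With the pointwise inequality in hand, the two consequences are immediate. For well-definedness, if $\alpha \in \A \otimes \bigwedge^k \A$ satisfies $\inp{\alpha}{\alpha} = 0$ then Proposition \ref{positive_metric} forces $g(\alpha, \alpha) = 0$ $\mu$-almost everywhere; the pointwise inequality then gives $g(\alpha \wedge \beta, \alpha \wedge \beta) = 0$ $\mu$-almost everywhere, and therefore $\alpha \wedge \beta \in D^{k+l}$, so $\wedge$ descends to a well-defined map $\Omega^k(M) \times \Omega^l(M) \to \Omega^{k+l}(M)$. For boundedness, $g(\beta, \beta)$ is a finite linear combination of determinants of entries $\Gamma(h', h'') \in \A$ and is therefore a bounded function on $E$ (using that $\A$ consists of bounded functions); integrating the pointwise inequality yields $\|\alpha \wedge \beta\|^2 \leq \|g(\beta,\beta)\|_\infty \, \|\alpha\|^2$, and by symmetry the analogous bound holds in the other argument.
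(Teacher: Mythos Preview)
Your reduction to the pointwise exterior-algebra inequality $\|u\wedge v\|^2 \le \|u\|^2\|v\|^2$ in $\bigwedge^\bullet H_x$ is exactly the same route the paper takes: the paper also fixes $x$, builds a $\Gamma_x$-orthonormal basis on the finite-dimensional span of the functions appearing in $\alpha$ and $\beta$, and then expands everything in that basis. So the approaches coincide. The place where they diverge is the step you flagged as ``the main technical obstacle'' --- and your instinct there is right, while the paper's argument is not.

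The sharp inequality $\|u\wedge v\|^2 \le \|u\|^2\|v\|^2$ is \emph{false} for non-simple multivectors, so the displayed inequality in the proposition fails as stated. Take $E=\R^6$ with the standard Laplacian, so $\Gamma(x_i,x_j)=\delta_{ij}$, and let
\[
\alpha \;=\; dx_1\wedge dx_2 + dx_3\wedge dx_4 + dx_5\wedge dx_6 \in \Omega^2(M).
\]
Then $g(\alpha,\alpha)=3$ at every point, while $\alpha\wedge\alpha = 2\big(dx_{1234}+dx_{1256}+dx_{3456}\big)$ gives $g(\alpha\wedge\alpha,\alpha\wedge\alpha)=12 > 9 = g(\alpha,\alpha)^2$. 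In your exterior-algebra language this is the standard fact that the symplectic form has $\|\omega\wedge\omega\| > \|\omega\|^2$ once $\dim \ge 6$. The paper's proof goes wrong precisely at the cross-term claim $\det\big[\Gamma((e^i\wedge e^j)_p,(e^{i'}\wedge e^{j'})_q)(x)\big]=\delta_{ii'}\delta_{jj'}$ for $(i,j),(i',j')\in S_+$: when the multisets $\{e^i,e^j\}$ and $\{e^{i'},e^{j'}\}$ agree as sets but $(i,j)\neq(i',j')$, that determinant is $\pm 1$, not $0$, and these surviving cross terms are exactly what push $g(\alpha\wedge\beta,\alpha\wedge\beta)$ above the product.

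The good news is that everything you actually need survives. Your observation that $g(\alpha,\alpha)(x)=0$ forces $\pi_x(\alpha)=0$ in $\bigwedge^k H_x$, hence $\pi_x(\alpha\wedge\beta)=0$, already gives well-definedness directly, with no inequality required. And your ``naive Cauchy--Schwarz'' bound $\|u\wedge v\|^2 \le \binom{k+l}{k}\,\|u\|^2\|v\|^2$ is correct (and sharp, by the example above), and after integration yields $\|\alpha\wedge\beta\|^2 \le \binom{k+l}{k}\,\|g(\beta,\beta)\|_\infty\,\|\alpha\|^2$, which is all that ``bounded in each argument'' needs. So your proof is complete for the ``in particular'' clauses once you replace the constant $1$ by $\binom{k+l}{k}$; the headline inequality itself should be amended accordingly.
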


See Figures \ref{fig:f3} and \ref{fig:cohom product} for examples of the wedge product.
We also define the exterior derivative, but for now only on functions\footnote{The derivative of $k$-forms is given in Subsection \ref{subsec ext derivative}.}.

\begin{definition}
The \textbf{exterior derivative} is the linear map $d_0 : \A \rightarrow \Omega^{1}(M)$ defined by
$$
f \mapsto df,
$$
i.e. $f$ maps to the equivalence class $[1 \otimes f]$.
\end{definition}

We will usually suppress the subscript in $d_0$ and just write $d$ where there is no ambiguity.
We immediately see that these definitions agree with the notation, for example
$$
f dh_1 \wedge dh_2 = f \wedge \big( d_0(h_1) \wedge d_0(h_2) \big).
$$

\begin{figure}[!ht]
  \centering
  \captionsetup{width=0.66\linewidth}
  \subfloat[][$\alpha$]{\includegraphics[width=.48\textwidth]{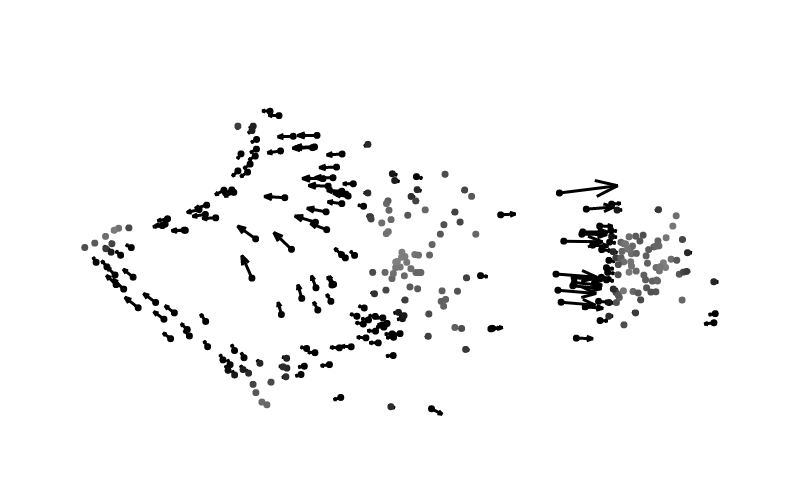}}\quad
  \subfloat[][$f$]{\includegraphics[width=.48\textwidth]{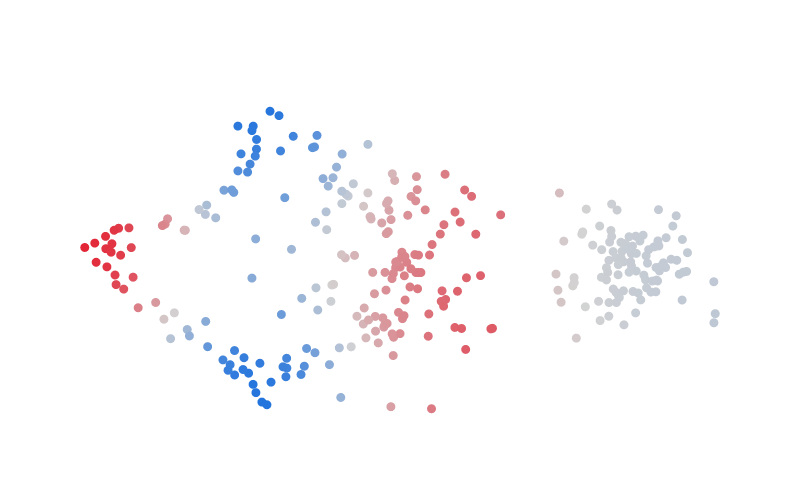}}\\
  \subfloat[][$f \alpha$]{\includegraphics[width=.48\textwidth]{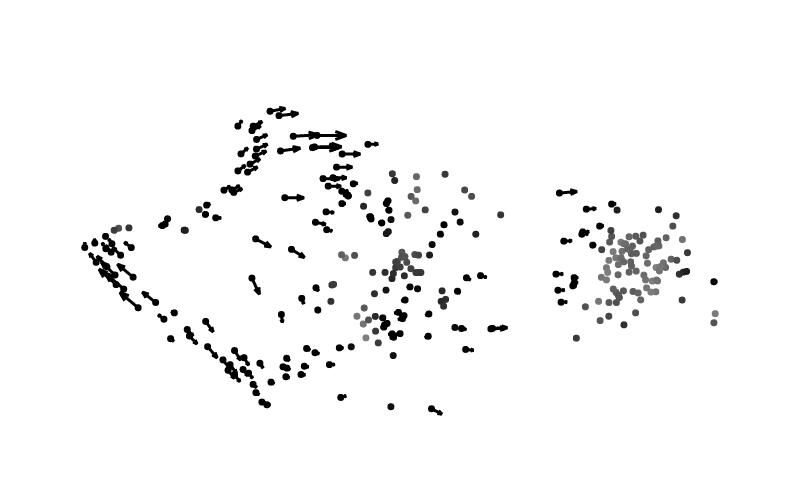}}\quad
  \subfloat[][$df$]{\includegraphics[width=.48\textwidth]{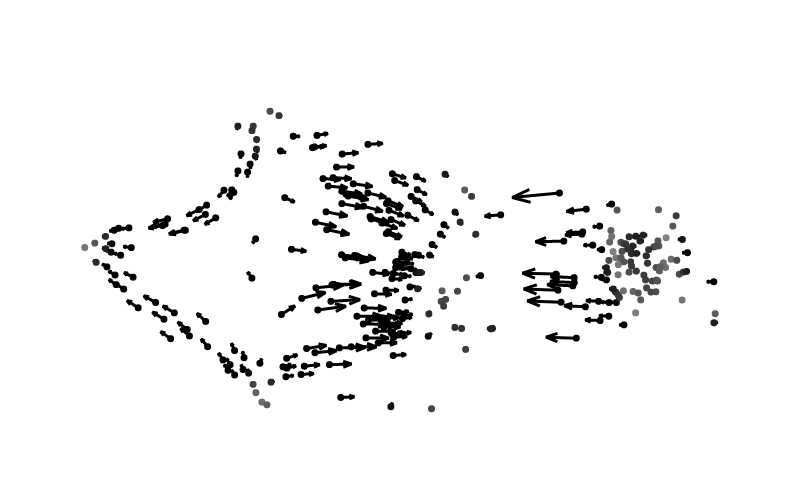}}
  \caption{\textbf{Wedge product and exterior derivative.}
  We can multiply a 1-form $\alpha$ (a) by a function $f$ (b).
  The product $f\alpha$ (c) restricts $\alpha$ to the support of $f$ (the left \q{component} of the data), and the negative values of $f$ (coloured blue) reverse the orientation of $\alpha$. $df$, visualised as its dual vector field $\nabla f$ (d), shows the direction of steepest increase of $f$.
  See Figure \ref{fig:cohom product} for a wedge product of two 1-forms.
  }
  \label{fig:f3}
\end{figure}

The fact that $M$ is a \textit{diffusion} Markov triple means the Leibniz rule holds for $d$, as an example of the chain rule.

\begin{restatable}[Calculus rules for $d_0$]{reprop}{calculusrulesd}
\label{calculus_rules_d0}
The following properties hold for $d = d_0$:
\begin{itemize}
    \item Chain rule: if $f_1,...,f_k \in \A$ and $\phi:\R^k \rightarrow \R$ is smooth then
    $$
    d\big(\phi(f_1,...,f_k)\big) = \sum_{i=1}^k \partial_i\phi(f_1,...,f_k) df_i
    $$
    \item Leibniz rule: if $f,h \in \A$ then $d(fh) = fd(h) + hd(f)$.
\end{itemize}
\end{restatable}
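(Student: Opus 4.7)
The plan is to verify both identities directly in the quotient $\Omega^1(M)$ by showing that the relevant representatives in $\A \otimes \A$ have zero norm, hence lie in $D^1$. Since the inner product $\inp{\cdot}{\cdot}$ is positive semi-definite by Proposition \ref{positive_metric}, Cauchy--Schwarz forces any element of zero self-norm to pair trivially with everything, so belonging to $D^1$ is equivalent to vanishing in the quotient.

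For the chain rule, abbreviate $\phi_* = \phi(f_1,\dots,f_k)$ and $\partial_i\phi_* = \partial_i\phi(f_1,\dots,f_k)$, and consider the candidate representative
$$
\omega \;=\; 1 \otimes \phi_* \;-\; \sum_{i=1}^k \partial_i\phi_* \otimes f_i \;\in\; \A \otimes \A.
$$
I would expand $\|\omega\|^2 = \inp{\omega}{\omega}$ by bilinearity, producing a diagonal term $\int \Gamma(\phi_*,\phi_*)\,d\mu$, a cross term $-2\sum_i \int \partial_i\phi_*\,\Gamma(\phi_*,f_i)\,d\mu$, and a quadratic term $\sum_{i,j}\int \partial_i\phi_*\,\partial_j\phi_*\,\Gamma(f_i,f_j)\,d\mu$. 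The diffusion property (Definition \ref{diffusion property}), applied in the second slot of $\Gamma$ via symmetry, gives
$$
\Gamma(\phi_*, f_i) \;=\; \sum_j \partial_j\phi_*\,\Gamma(f_j, f_i), \qquad \Gamma(\phi_*, \phi_*) \;=\; \sum_{i,j} \partial_i\phi_* \partial_j\phi_*\,\Gamma(f_i, f_j).
$$
Substituting these expressions rewrites all three terms as the same integral $\int \sum_{i,j}\partial_i\phi_*\,\partial_j\phi_*\,\Gamma(f_i,f_j)\,d\mu$ with signs $+1$, $-2$, $+1$, which sum to $0$. Hence $\omega \in D^1$, and $d\phi_* = \sum_i \partial_i\phi_*\, df_i$ in $\Omega^1(M)$.

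The Leibniz rule follows as the special case $\phi(x_1, x_2) = x_1 x_2$, which is smooth and vanishes at the origin (so the composition constraint in Definition \ref{diffusion property} is met). Since $\partial_1\phi(f,g) = g$ and $\partial_2\phi(f,g) = f$, the chain rule immediately yields $d(fg) = g\,df + f\,dg$.

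The only genuine step is the chain rule computation, and the main obstacle there is purely notational: lining up the three expansions of $\|\omega\|^2$ so that the diffusion property produces transparent cancellation. Conceptually nothing is hard, because the diffusion property is precisely the statement that $\Gamma$ acts as a derivation in each argument, which is exactly what the chain rule for $d$ says after integrating against a test function; the Leibniz rule is then immediate.
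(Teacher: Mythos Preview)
Your proof is correct and rests on the same ingredient as the paper's, namely the diffusion property of $\Gamma$. The only difference is packaging: the paper pairs $d\phi_*$ against an arbitrary spanning element $h'\,dh$ and applies the diffusion property once to $\Gamma(\phi_*,h)$, then invokes non-degeneracy of the inner product on the quotient; you instead compute $\|\omega\|^2$ directly, which forces you to apply the diffusion property twice (to $\Gamma(\phi_*,f_i)$ and then to $\Gamma(\phi_*,\phi_*)$). Both arrive at the same place with equal rigour, and your observation that zero self-norm forces membership in $D^1$ via Cauchy--Schwarz is exactly what the paper means by ``non-degeneracy on the quotient''.
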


\subsection{First-order calculus: vector fields and duality}
\label{subsec first order}

We now construct the first-order calculus in diffusion geometry, including vector fields and their duality with 1-forms.
We first consider the larger set of \textit{derivations}, which define a sort of directional derivative of functions.

\begin{definition}\label{derivation def}
A \textbf{derivation} is a linear map $X:\A \rightarrow \A$ which satisfies $X(fh) = fX(h) + hX(f)$ (which we call the Leibniz rule).
\end{definition}

Contraction with the metric $g$ gives a natural pairing between forms and derivations. 
If $\alpha \in \Omega^1(M)$ is a 1-form, then $\alpha^\sharp(f) := g(\alpha,df)$ defines a derivation (where the Leibniz rule for $\alpha^\sharp$ is inherited from $d$).
We define the \textit{vector fields} as the derivations in the image of this map (see Figure \ref{fig:f5}).

\begin{definition}
A \textbf{vector field} is a derivation $X$ that satisfies $X = \alpha^\sharp$ for some $\alpha \in \Omega^1(M)$.
$X$ is then called the \textbf{dual vector field} of $\alpha$.
We denote the space of vector fields by $\mathfrak{X}(M)$.
Each vector field $X$ has a \textbf{dual one form} $X^\flat$ defined by $g(X^\flat,df) = X(f)$
for all $f\in\A$. 
We extend the \textbf{metric} to vector fields as $g(X,Y) = g(X^\flat, Y^\flat)$.
\end{definition}

This definition of $X^\flat$ specifies it uniquely because the terms $fdh$ span $\Omega^1(M)$, and we can evaluate $\inp{X^\flat}{fdh} = \int fg(X^\flat,dh) d\mu$. 
It is clear to see that the map $\alpha \mapsto \alpha^\sharp$ is an isometric isomorphism between $\Omega^1(M)$ and $\mathfrak{X}(M)$, with inverse $X \mapsto X^\flat$ (both called \textit{musical isomorphisms}).
We can use this duality to define the usual action of 1-forms on vector fields.

\begin{definition}
A 1-form $\alpha$ defines an $\A$-linear map $\mathfrak{X}(M) \rightarrow \A$ by $\alpha(X) = g(\alpha, X^\flat)$.
\end{definition}

\begin{figure}[!h]
  \centering
  \captionsetup{width=0.66\linewidth}
  \subfloat[][$X$]{\includegraphics[width=.3\textwidth]{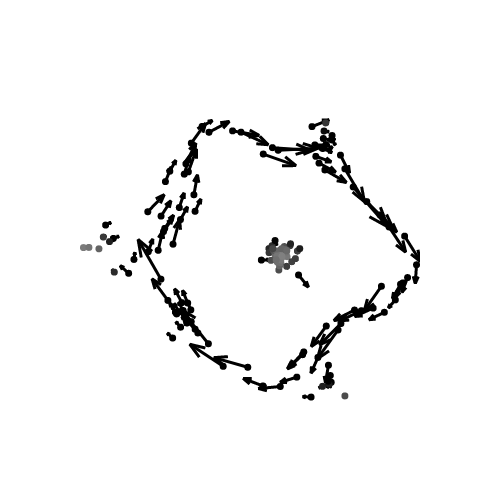}}
  \subfloat[][$f$]{\includegraphics[width=.3\textwidth]{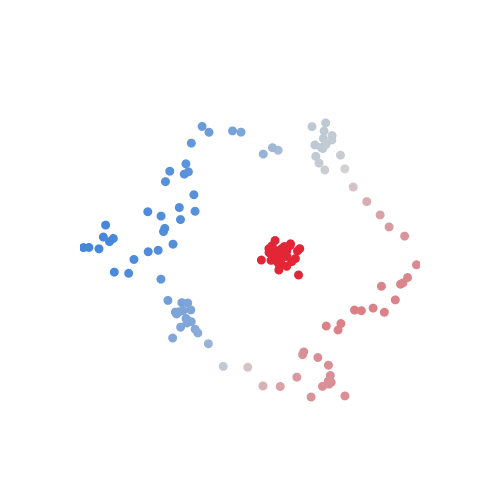}}
  \subfloat[][$X(f)$]{\includegraphics[width=.3\textwidth]{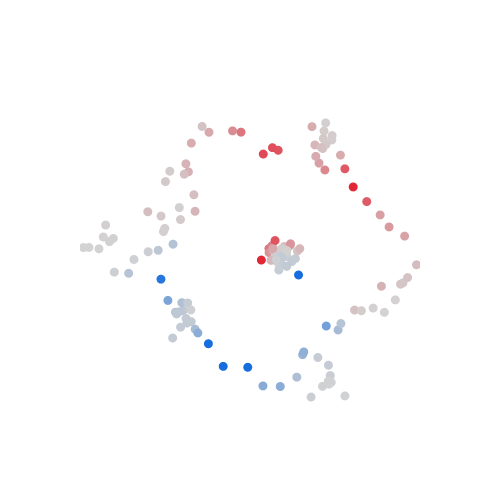}}\\
  \subfloat[][$h$]{\includegraphics[width=.3\textwidth]{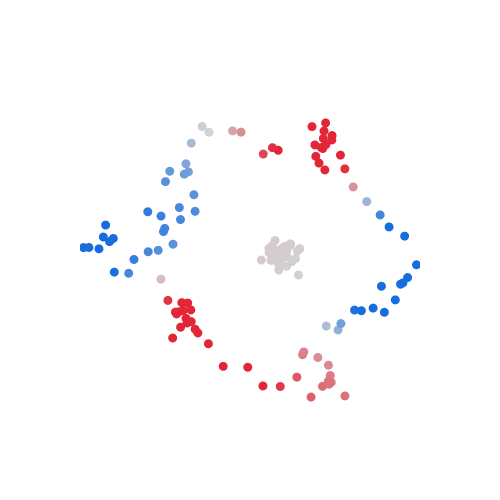}}
  \subfloat[][$X(h)$]{\includegraphics[width=.3\textwidth]{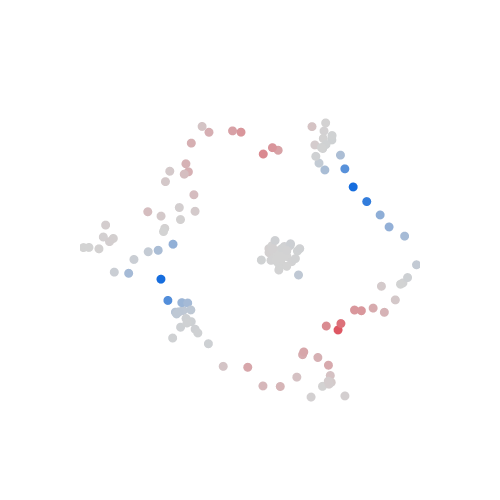}}
  \caption{\textbf{Vector fields act on functions.}
  The vector field $X$ (a) acts on $f$ and $h$ (b, d) by differentiation clockwise (c, e). 
  }
  \label{fig:f5}
\end{figure}

\begin{remark}\label{visualisation remark}
We can visualise vector fields by their action on the coordinate functions of data.
In all the examples in this section, the data are drawn from probability distributions on $\R^2$, and so have two coordinate functions $x$ and $y$.
We can calculate the \q{direction} of a vector field $X$ at a point $p$, with position $\big(x(p),y(p)\big)$, as the vector $\big(X(x)(p),X(y)(p)\big)$.
We visualise 1-forms by their dual vector fields, i.e. $(fdh)^\sharp(x) = f\Gamma(h,x)$.
\end{remark}

We can also define the gradient operator by duality.

\begin{definition}
The \textbf{gradient} $\nabla:\A \rightarrow \mathfrak{X}(M)$ is the dual operator to the exterior derivative $d_0$, defined by $\nabla(f) = (df)^\sharp$.
\end{definition}

Given this definition of gradient, we see that we could offer precisely the same definition for $\mathfrak{X}(M)$ as of $\Omega^1(M)$, except with $d$ exchanged for $\nabla$ (hence the musical isomorphism).
The duality of forms and vector fields lets us define the interior product, which satisfies the usual properties.

\begin{definition}[Interior product]
If $X \in \mathfrak{X}(M)$, we define the \textbf{interior product} $i_X : \Omega^k(M) \rightarrow \Omega^{k-1}(M)$ inductively by
\begin{enumerate}
    \item if $\alpha \in \Omega^0(M)$, $i_X(\alpha) := 0$
    \item if $\alpha \in \Omega^1(M)$, $i_X(\alpha) := \alpha(X)$
    \item if $\alpha \in \Omega^k(M)$, $i_X(\alpha \wedge \beta) := i_X(\alpha) \wedge \beta + (-1)^k \alpha \wedge i_X(\beta)$.
\end{enumerate}
\end{definition}

\begin{restatable}{reprop}{interiorproductprop}
\label{interior product prop}
$i_X(\alpha)$ a well defined bilinear operator in $X$ and $\alpha$, and satisfies $i_Y i_X = -i_X i_Y$.
\end{restatable}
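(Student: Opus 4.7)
The plan is to first unwind the inductive definition into an explicit formula on the tensor space $\A \otimes \bigwedge^k \A$, then address well-definedness on the quotient $\Omega^k$, and finally verify anticommutation by direct calculation. Applying rules (2) and (3) recursively to a pure tensor by peeling off one-forms in order yields the familiar antisymmetrization
\[
i_X(f\, dh_1 \wedge \cdots \wedge dh_k) \;=\; \sum_{j=1}^{k} (-1)^{j-1} f\, X(h_j)\, dh_1 \wedge \cdots \widehat{dh_j} \cdots \wedge dh_k,
\]
which I take as the definition of $i_X$ on $\A \otimes \bigwedge^k \A$ and extend by linearity. Bilinearity in $X$ (via linearity of $X \mapsto X(h_j)$) and in the form argument are then immediate at this tensor-space level.

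The substantive step is showing that the map descends to the quotient, i.e., that $i_X$ sends $D^k$ into $D^{k-1}$. I plan to establish this through the adjoint identity
\[
\inp{i_X \omega}{\eta} \;=\; \inp{\omega}{\alpha \wedge \eta},
\]
valid for $\omega \in \A \otimes \bigwedge^k \A$ and $\eta \in \A \otimes \bigwedge^{k-1} \A$, where $\alpha \in \A \otimes \A$ is any representative of $X^\flat$. By bilinearity it suffices to check this on pure tensors, where both sides expand as integrals of Gram determinants and the equality reduces to a Laplace cofactor expansion of $\det(\Gamma(h_i, \tilde h_l))$ along the column arising from $\alpha$, combined with the defining identity $X(h_j) = g(\alpha, dh_j)$. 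Once the adjoint identity is in hand, Cauchy-Schwarz for positive semi-definite symmetric bilinear forms gives that $\inp{\omega}{\omega} = 0$ implies $\inp{\omega}{\xi} = 0$ for every $\xi$; taking $\xi = \alpha \wedge \eta$ yields $\inp{i_X \omega}{\eta} = 0$ for every $\eta$, and then choosing $\eta = i_X\omega$ forces $i_X\omega \in D^{k-1}$.

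For the anticommutation $i_Y i_X = -i_X i_Y$, I apply the explicit formula twice. The resulting double sum runs over ordered pairs $(j, l)$ with $j \neq l$; grouping by the unordered pair $\{j, l\}$ and tracking the signs from the two successive antisymmetrizations, each unordered pair contributes a term proportional to $X(h_j)Y(h_l) - X(h_l)Y(h_j)$, which is manifestly antisymmetric under $X \leftrightarrow Y$. This gives the identity on pure tensors, and linearity extends it to all of $\Omega^k$.

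The main obstacle will be the well-definedness step, and within it the combinatorial identity between the antisymmetrized formula for $i_X$ and the Laplace expansion of the Gram determinant; once that bridge is secured, bilinearity is visible from the formula and the anticommutation reduces to sign-tracking.
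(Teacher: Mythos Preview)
Your proposal is correct and takes a genuinely different route from the paper. The paper works directly with the recursive rule (3) of the definition, peeling off $df_k$ one step at a time: it writes $i_X(f_0\,df_1\wedge\cdots\wedge df_k)$ in terms of $i_X$ applied to the $(k-1)$-form $f_0\,df_1\wedge\cdots\wedge df_{k-1}$, and then proves anticommutation by induction on $k$, using the inductive hypothesis on $\Omega^{k-1}$ together with the recursion. By contrast, you unwind the recursion once and for all into the closed antisymmetrization formula, establish well-definedness on the quotient via the adjoint identity $\inp{i_X\omega}{\eta}=\inp{\omega}{X^\flat\wedge\eta}$ and Cauchy--Schwarz, and then get anticommutation by direct sign-tracking in the double sum.

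Your approach buys a cleaner and more explicit treatment of well-definedness: the adjoint identity, verified by a single Laplace expansion, immediately shows $i_X$ kills $D^k$ because the wedge with $X^\flat$ is already known to be well-defined on $\Omega^{k-1}$ (Proposition~\ref{wedge_well_defined}). The paper's proof is comparatively terse on this point, relying on the fact that each piece of the recursion is already a well-defined operation on the quotient. The paper's inductive anticommutation argument, on the other hand, avoids the bookkeeping of the double sum and stays closer to the axiomatic definition, which is stylistically consistent with how the interior product was introduced.
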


We are focusing, for simplicity, on a function space $\A$ which satisfies several nice properties.
The next section will require more delicate functional analysis, so we define the following \textit{complete} spaces of forms and vector fields.

\begin{definition}
We define $L^2\Omega^1(M)$ and $L^2\mathfrak{X}(M)$ to be the completions of $\Omega^1(M)$ and $\mathfrak{X}(M)$ respectively.
\end{definition}

\subsection{Second-order calculus: Hessian, covariant derivative, and Lie bracket}
\label{subsec second order}

We can now introduce the second-order objects from Riemannian geometry, and start by introducing the necessary tensor products of forms and vector fields.

\begin{definition}
We define the \textbf{space of (0,2)-tensors}
$$\Omega^1(M)^{\otimes2} := \Omega^1(M) \otimes_\A \Omega^1(M),$$
so elements of $\Omega^1(M)^{\otimes2}$ are finite sums $\sum_i h_i da_i \otimes db_i$. 
We extend the metric to these tensors by
$$
g(h_1 da_1 \otimes db_1, h_2 da_2 \otimes db_2) := h_1h_2 g(a_1,a_2) g(b_1,b_2).
$$
Equivalently, $g(\alpha_1\otimes\beta_1, \alpha_2\otimes\beta_2) = g(\alpha_1,\alpha_2) g(\beta_1,\beta_2)$.
The pointwise norm $\sqrt{g(A,A)}$ is known as the \textbf{(pointwise) Hilbert-Schmidt norm} of the tensor and is denoted $|A|_{HS}$.
This metric induces a norm by integration, and the \textbf{complete space of (0,2)-tensors}, denoted $L^2(\Omega^1(M)^{\otimes2})$, is the completion of $\Omega^1(M)^{\otimes2}$.
\end{definition}

\begin{definition}
We define the \textbf{space of (2,0)-tensors}
$$\mathfrak{X}(M)^{\otimes2} := \mathfrak{X}(M) \otimes_\A \mathfrak{X}(M),$$
so elements of $\mathfrak{X}(M)^{\otimes2}$ are finite sums $\sum_i h_i \nabla a_i \otimes \nabla b_i$. 
We extend the metric analogously to the (0,2)-tensors, and define the corresponding norm.
The \textbf{complete space of (2,0)-tensors}, denoted $L^2(\mathfrak{X}(M)^{\otimes2})$, is the completion of $\mathfrak{X}(M)^{\otimes2}$.
We obtain the usual musical isomorphisms $\sharp$ and $\flat$ by
$$
(f dh_1 \otimes dh_2)^\sharp = f \nabla h_1 \otimes \nabla h_2
\qquad
(f \nabla h_1 \otimes \nabla h_2)^\flat = f dh_1 \otimes dh_2,
$$
which are isometric isomorphisms $\Omega^1(M)^{\otimes2} \cong \mathfrak{X}(M)^{\otimes2}$ and $L^2(\Omega^1(M)^{\otimes2}) \cong L^2(\mathfrak{X}(M)^{\otimes2})$.
There is an action of $(0,2)$-tensors on $(2,0)$-tensors: if $A \in \Omega^1(M)^{\otimes2}$ or $A \in L^2(\Omega^1(M)^{\otimes2})$ and $\xi \in \mathfrak{X}(M)^{\otimes2}$ then 
$$
A(\xi) := g(A, \xi^\flat).
$$
If $A \in \Omega^1(M)^{\otimes2}$ or $A \in L^2(\Omega^1(M)^{\otimes2})$, we also define
$$
A(X,Y) := A(X\otimes Y)
$$
for $X,Y \in \mathfrak{X}(M)$.
\end{definition}

\subsubsection{Hessian}

We can use this tensor technology to define the second-order geometric objects, starting with the Hessian.
For functions $f: \R^n \rightarrow \R$, the Hessian is the matrix $\big(\frac{\partial^2 f}{\partial x_i \partial x_j}\big)$, and so generalises the \textit{second derivative} of a function (see Figure \ref{fig:f7}).
On manifolds, we get the same expression in geodesic normal coordinates.
The formula we use here as a definition is derived from standard properties of the manifold Hessian in Appendix A, Proposition \ref{manifold hessian}.

\begin{definition}\label{hessian def}
We define the \textbf{Sobolev space} $W^{2,2}(M) \subseteq \A$ as the set of $f \in \A$ for which there exists an $A \in L^2(\Omega^1(M)^{\otimes2})$ satisfying
$$
A(\nabla a,\nabla b) = \frac{1}{2}\big( \Gamma(a, \Gamma(f,b)) + \Gamma(b, \Gamma(f,a)) - \Gamma(f, \Gamma(a,b)) \big).
$$
for all $a,b\in\A$. This tensor, called the \textbf{Hessian}, is uniquely determined and is denoted $H(f)$.
\end{definition}

\begin{figure}[!ht]
  \centering
  \captionsetup{width=0.66\linewidth}
  \subfloat[][$f$]{\includegraphics[width=.3\textwidth]{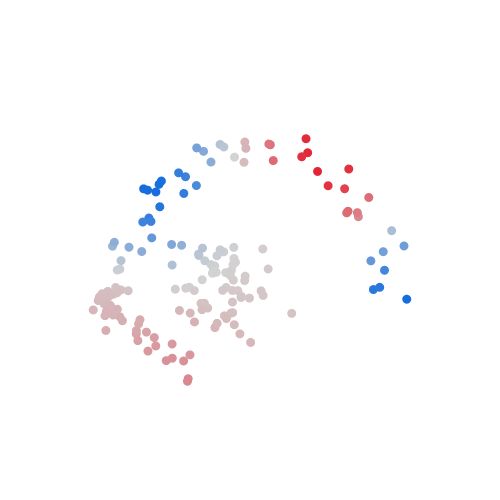}}
  \subfloat[][$X$]{\includegraphics[width=.3\textwidth]{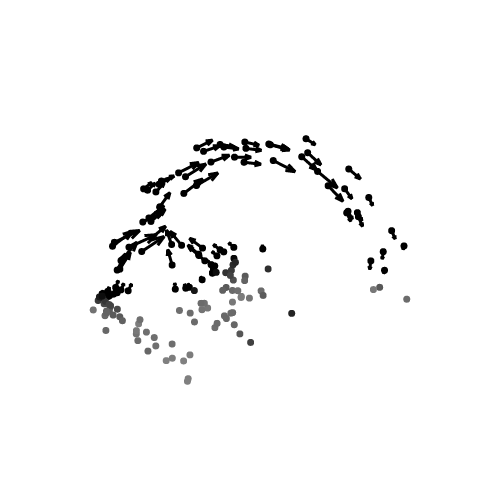}}
  \subfloat[][$Y$]{\includegraphics[width=.3\textwidth]{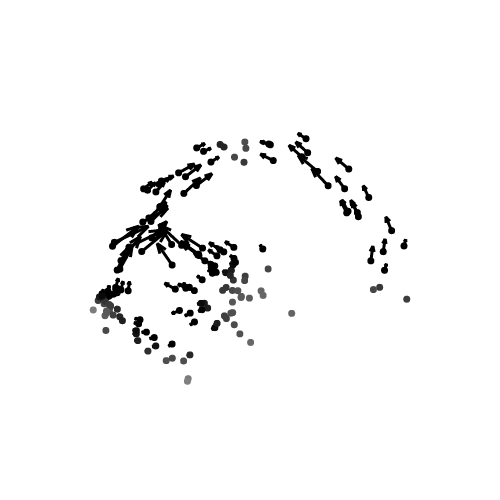}}\\
  \subfloat[][$g(df,df) = \Gamma(f,f)$]{\includegraphics[width=.3\textwidth]{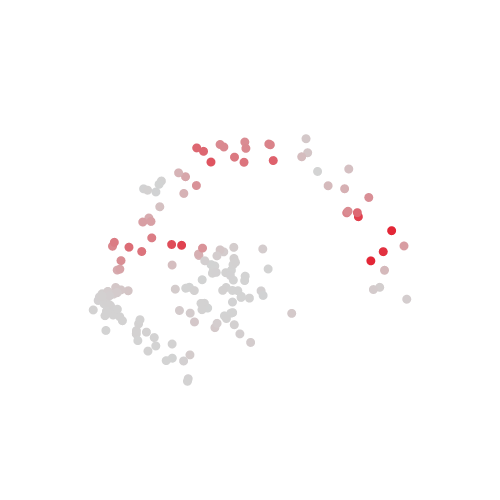}}
  \subfloat[][$H(f)(X,X)$]{\includegraphics[width=.3\textwidth]{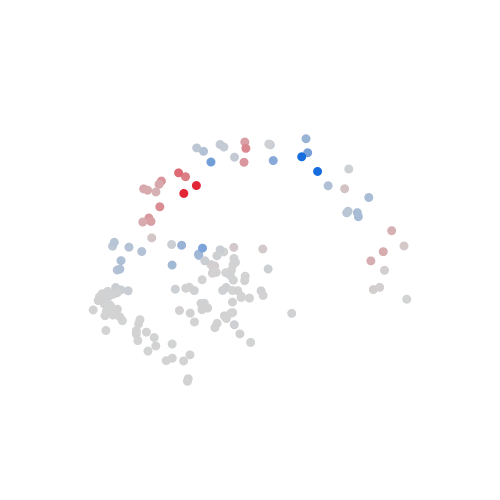}}
  \subfloat[][$H(f)(X,Y)$]{\includegraphics[width=.3\textwidth]{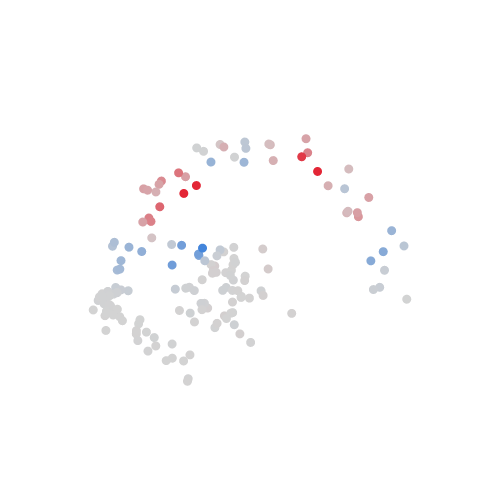}}\\
  \caption{\textbf{The Hessian (second derivative of functions).}
  A function $f$ has a maximum at the top right of the data and a minimum at the top left (a), which correspond to points where its derivative $df$ is zero (d). 
  Given a vector field $X$, which is supported near these local extrema (b), we can compute the Hessian $H(f)(X,X)$ to classify the extrema \textit{relative to the direction of $X$} (e). 
  The Hessian is negative at the maximum and positive at the minimum. 
  If we use a different vector field $Y$ (c) as the second argument, the Hessian changes sign in the areas where $X$ and $Y$ are unaligned (f).}
  \label{fig:f7}
\end{figure}

It is clear to see that the Hessian is a trilinear operator in $f$, $a$, and $b$, and is symmetric in $a$ and $b$.
The formula above dually defines $H(f)$ as an operator $\mathfrak{X}(M)^{\otimes 2}\rightarrow \A$ for any $f \in \A$, but this is not necessarily a \textit{bounded} operator. 
If it were bounded, the Reisz representation theorem would guarantee the existence of a $H(f) \in L^2(\Omega^1(M)^{\otimes2})$. 
Instead, we have to make do with $W^{2,2}(M)$, which may be a proper subset of $\A$.
In fact, in the most general setting, we cannot guarantee which, if any, functions from $\A$ are in $W^{2,2}(M)$.
Although, in the special case of RCD spaces\footnote{RCD spaces place a lower bound on the \q{Ricci curvature} of a metric measure space in a synthetic sense. A space being $RCD(K, \infty)$ is equivalent to the Ricci curvature being bounded below by $K$, and imposes strong regularity properties on the heat flow.}, the following result follows from a theorem of Gigli.

\begin{theorem}[Follows from 6.2.22. in $\protect\cite{gigli2020lectures}$]
\label{W22 A0 equivalence gigli}
If $M$ is an RCD$(K,\infty)$ space then $\A = W^{2,2}(M)$.
\end{theorem}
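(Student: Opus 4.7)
The plan is to reduce the statement to Gigli's existence theorem for the Hessian on RCD$(K,\infty)$ spaces (6.2.22 in \cite{gigli2020lectures}). Note that $W^{2,2}(M) \subseteq \A$ holds tautologically by Definition \ref{hessian def}, so the content of the theorem is the reverse inclusion: every $f \in \A$ admits some $H(f) \in L^2(\Omega^1(M)^{\otimes 2})$ satisfying the polarised Bakry-Emery identity of that definition.

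The first step is a notational translation. I would verify that the defining identity
$$
2 A(\nabla a, \nabla b) = \Gamma(a,\Gamma(f,b)) + \Gamma(b,\Gamma(f,a)) - \Gamma(f,\Gamma(a,b))
$$
coincides with Gigli's formulation of the Hessian on a metric measure space, once the Markov-triple carré du champ $\Gamma$ is identified with the one arising from the Cheeger energy in the RCD framework. Both sides are polarisations of the same Bochner bilinear form, so this amounts to matching symbols rather than performing a new computation.

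The second step is to invoke the cited theorem. On an RCD$(K,\infty)$ space, Gigli proves that every function in a suitable test algebra admits a Hessian in $L^2$. The quantitative engine behind this result is the integrated Bakry-Emery $\Gamma_2$ estimate, in which the RCD lower Ricci bound controls the Hilbert-Schmidt norm of the Hessian in terms of the infinitesimal generator and $\Gamma(f,f)$; this bound is what allows one to realise the trilinear form
$$
(a,b) \mapsto \tfrac{1}{2}\big(\Gamma(a,\Gamma(f,b)) + \Gamma(b,\Gamma(f,a)) - \Gamma(f,\Gamma(a,b))\big)
$$
as an honest element of $L^2(\Omega^1(M)^{\otimes 2})$ via a Riesz representation argument on the space of gradient tensors.

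The main obstacle, and the only content beyond the citation, is the compatibility check: one must confirm that the algebra $\A$ satisfying Definition \ref{markov triple def} sits inside Gigli's test algebra, so that his result applies to every $f \in \A$ and not merely to a dense subclass. The diffusion property and the density condition built into the Markov-triple framework are precisely what is needed to make this identification. Once it is in place, $H(f) \in L^2(\Omega^1(M)^{\otimes 2})$ exists for every $f \in \A$, yielding $\A \subseteq W^{2,2}(M)$ and hence equality.
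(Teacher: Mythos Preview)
Your proposal is correct and matches the paper's approach: the paper does not supply its own proof of this theorem but simply records it as a consequence of Gigli's result 6.2.22 in \cite{gigli2020lectures}, and your sketch of why that citation yields $\A \subseteq W^{2,2}(M)$ (together with the tautological reverse inclusion) is exactly the intended reduction.
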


The Sobolev space $W^{2,2}(M)$ contains the functions whose Hessian exists as an $L^2(\Omega^1(M)^{\otimes2})$ tensor.
In the following, we would like to work with Hessians that are actually in $\Omega^1(M)^{\otimes2}$, and so define the following space of \q{twice differentiable} functions.

\begin{definition}
We define the \textbf{$\infty$-Sobolev space}
$$
W^{2,\infty}(M) := \{ f \in W^{2,2} : H(f) \in \Omega^1(M)^{\otimes2} \}.
$$
\end{definition}

In other words, $W^{2,\infty}(M)$ contains the functions whose Hessian exists and can be written as a \textit{finite} sum
$$
H(f) = \sum_i h_i da_i \otimes db_i,
$$
unlike the other functions in $W^{2,2}(M)$ whose Hessian may be a limit of such sums.

\begin{remark}
The fact that the formula for the Hessian defines $H(f)$ as an operator $\mathfrak{X}(M)^{\otimes 2}\rightarrow \A$ for any $f \in \A$, even if $H(f)$ is not a well-defined tensor in $L^2(\Omega^1(M)^{\otimes2})$, represents a strong theme in this work.
For many of the objects defined below, we can construct a more general \q{weak} version that acts on functions, but can only define a tensorial version on a delicately specified subspace.
\end{remark}

The other second-order differential objects can be built from the Hessian, and so, by considering just the functions from $W^{2,\infty}(M)$, we can guarantee that these other objects are well defined.
Specifically, we can now define a notion of \q{differentiable} vector fields.

\begin{definition}
We define the space of \textbf{differentiable vector fields}
$$
\mathfrak{X}^1(M) := \left\{\sum f_i \nabla h_i \in \mathfrak{X}(M) : h_i \in W^{2,\infty}(M) \text{ for all } i \right\}.
$$
\end{definition}

These vector fields are \q{differentiable} because the functions $h_i$ have already been differentiated once by $\nabla$, and so we also need their second derivative $H(h_i)$ to be a well behaved tensor.
We can derive the usual calculus rules for $H$.

\begin{restatable}{reprop}{hessianleibnizrule}
\label{hessian leibniz rule}
The Hessian satisfies the (second order) Leibniz rule
$$
H(fh) = f H(h) + h H(f) + df \otimes dh + dh \otimes df
$$
for all $f,h \in W^{2,2}(M)$.
In particular, $W^{2,2}(M)$ and $W^{2,\infty}(M)$ are closed under multiplication (and so are subalgebras of $\A$).
\end{restatable}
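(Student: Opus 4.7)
The plan is to verify the identity by testing both sides against pairs $\nabla a \otimes \nabla b$ with $a, b \in \A$. The Hessian of $fh$, if it exists as an element of $L^2(\Omega^1(M)^{\otimes 2})$, is uniquely determined by the formula in Definition \ref{hessian def}, so it suffices to show that the candidate tensor $T := fH(h) + hH(f) + df \otimes dh + dh \otimes df$ lies in $L^2(\Omega^1(M)^{\otimes 2})$ and satisfies
\[
T(\nabla a, \nabla b) = \tfrac{1}{2}\big(\Gamma(a, \Gamma(fh,b)) + \Gamma(b, \Gamma(fh,a)) - \Gamma(fh, \Gamma(a,b))\big)
\]
for all $a,b \in \A$. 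This will simultaneously establish $fh \in W^{2,2}(M)$ and the formula for $H(fh)$.

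The core of the argument is a direct expansion via the diffusion property (Definition \ref{diffusion property}) applied to $\phi(x,y) = xy$, which vanishes at $0$ as required, giving $\Gamma(fh, c) = f\Gamma(h,c) + h\Gamma(f,c)$ for every $c \in \A$. Iterating this inside $\Gamma(a, \Gamma(fh, b))$, and using the symmetrized form $\Gamma(a, fY) = f\Gamma(a, Y) + Y\Gamma(a, f)$, produces four summands; analogously for $\Gamma(b, \Gamma(fh, a))$. The outer term $\Gamma(fh, \Gamma(a,b))$ splits cleanly as $f\Gamma(h, \Gamma(a,b)) + h\Gamma(f, \Gamma(a,b))$ with no cross-terms. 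Collecting, the $f$-weighted contributions reassemble into $2fH(h)(\nabla a, \nabla b)$, the $h$-weighted ones into $2hH(f)(\nabla a, \nabla b)$, and the four remaining cross-terms combine (using the symmetry of $\Gamma$) into $2\big(\Gamma(f,a)\Gamma(h,b) + \Gamma(h,a)\Gamma(f,b)\big)$, which, by the definition of the $(0,2)$-tensor metric, is exactly $2(df \otimes dh + dh \otimes df)(\nabla a, \nabla b)$.

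To conclude that $fh \in W^{2,2}(M)$ I would verify that $T \in L^2(\Omega^1(M)^{\otimes 2})$. Since $f, h$ are bounded (as $\A$ consists of bounded functions), $fH(h)$ and $hH(f)$ are square-integrable. The elementary tensors $df \otimes dh$ and $dh \otimes df$ have pointwise Hilbert-Schmidt norm $\sqrt{\Gamma(f,f)\Gamma(h,h)}$, which lies in $\A$ and hence is bounded; it is therefore $\mu$-square-integrable because $\mu(E) = 1$. The closure of $W^{2,\infty}(M)$ under multiplication is then immediate from the formula: if $H(f), H(h) \in \Omega^1(M)^{\otimes 2}$ are finite sums of elementary tensors, every summand of $T$ is as well, so $H(fh) \in \Omega^1(M)^{\otimes 2}$.

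The main obstacle is purely the bookkeeping: tracking the eight intermediate terms produced by applying the diffusion rule twice, and recognizing that the four surviving cross-terms reassemble into precisely the symmetrized tensor $df \otimes dh + dh \otimes df$ rather than, say, $2\,df \otimes dh$ alone. No genuine analytic difficulty arises, since every intermediate quantity stays in the algebra $\A$ and no limits are taken in the core computation.
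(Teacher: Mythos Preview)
Your proposal is correct and follows essentially the same approach as the paper: expand the defining formula for $H(fh)(\nabla a,\nabla b)$ using the diffusion property $\Gamma(fh,c)=f\Gamma(h,c)+h\Gamma(f,c)$, then regroup into $fH(h)+hH(f)$ plus the symmetric cross-terms. The only cosmetic difference is that the paper computes on the diagonal $(\nabla a,\nabla a)$ and invokes polarisation, whereas you work directly with general $(\nabla a,\nabla b)$; you also spell out the $L^2$ membership of $T$ more carefully than the paper does (one minor quibble: $\sqrt{\Gamma(f,f)\Gamma(h,h)}$ need not lie in $\A$ since $\sqrt{\cdot}$ is not smooth at $0$, but its square $\Gamma(f,f)\Gamma(h,h)\in\A$ is bounded, which is all that is needed).
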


\begin{restatable}{reprop}{hessianproductrule}
\label{hessian product rule}
The Hessian satisfies the product rule for the carré du champ
$$
d \big( \Gamma(f_1, f_2) \big) = H(f_1)(\nabla f_2,\cdot) + H(f_2)(\nabla f_1,\cdot)
$$
for all $f_1,f_2 \in W^{2,2}(M)$.
\end{restatable}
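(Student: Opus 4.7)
The plan is to verify the identity by pairing both sides with the test 1-form $db$ for arbitrary $b \in \A$ and reducing to an algebraic identity in the function algebra. Both sides of the claimed equation live in $L^2\Omega^1(M)$: the left-hand side is in $\Omega^1(M)$ since $\Gamma(f_1,f_2) \in \A$, while for the right-hand side each $H(f_i) \in L^2(\Omega^1(M)^{\otimes 2})$ and contraction with the bounded vector field $\nabla f_j$ yields an $L^2$ 1-form. So it suffices to show the inner products with $db$ agree for all $b \in \A$, since the forms $\{f\,dh : f,h\in\A\}$ are dense in $L^2\Omega^1(M)$ and the pairing with $f\,dh$ reduces to pairing with $dh$ by pulling out $f$ under the integral.

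For the left-hand side, the definition of the inner product on 1-forms gives $g(d(\Gamma(f_1,f_2)), db) = \Gamma(\Gamma(f_1,f_2), b)$ directly. For each term on the right-hand side, the tensor-vector contraction satisfies $g(H(f_i)(\nabla f_j, \cdot), db) = H(f_i)(\nabla f_j, \nabla b)$ by construction of the metric on (0,2)-tensors. So the identity reduces to proving
\[
\Gamma(\Gamma(f_1,f_2), b) \;=\; H(f_1)(\nabla f_2, \nabla b) + H(f_2)(\nabla f_1, \nabla b) \quad \text{in } \A, \text{ for all } b \in \A.
\]

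The verification is then a direct substitution using Definition \ref{hessian def}. Expanding,
\[
2H(f_1)(\nabla f_2, \nabla b) = \Gamma(f_2, \Gamma(f_1, b)) + \Gamma(b, \Gamma(f_1, f_2)) - \Gamma(f_1, \Gamma(f_2, b)),
\]
\[
2H(f_2)(\nabla f_1, \nabla b) = \Gamma(f_1, \Gamma(f_2, b)) + \Gamma(b, \Gamma(f_2, f_1)) - \Gamma(f_2, \Gamma(f_1, b)).
\]
Adding these, the $\Gamma(f_2, \Gamma(f_1, b))$ terms cancel and the $\Gamma(f_1, \Gamma(f_2, b))$ terms cancel, leaving $\Gamma(b, \Gamma(f_1,f_2)) + \Gamma(b, \Gamma(f_2,f_1)) = 2\Gamma(b, \Gamma(f_1,f_2))$ by symmetry of $\Gamma$. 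Dividing by $2$ and using symmetry again gives $\Gamma(\Gamma(f_1,f_2), b)$, exactly the left-hand side.

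The computation itself is mechanical, so the only real subtlety, and thus the main obstacle, is the $L^2$ bookkeeping: one must confirm that $H(f_i)(\nabla f_j, \cdot)$ is a legitimate element of $L^2\Omega^1(M)$ when $f_i$ lies in $W^{2,2}(M) \setminus W^{2,\infty}(M)$ (so that $H(f_i)$ is only an $L^2$-limit of finite tensor sums), and that the contraction commutes with taking that limit against a fixed $\nabla f_j$ with bounded pointwise norm $\sqrt{\Gamma(f_j,f_j)} \in \A$. This is standard — boundedness of $|\nabla f_j|$ makes the contraction map a bounded linear operator $L^2(\Omega^1(M)^{\otimes 2}) \to L^2\Omega^1(M)$ — but it is the only point where care is needed beyond the symmetric-$\Gamma$ manipulation.
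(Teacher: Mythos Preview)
Your proof is correct and takes essentially the same approach as the paper: test both sides against $\nabla b$ (equivalently, pair with $db$), expand the Hessians using Definition~\ref{hessian def}, and observe the pairwise cancellation leaving $\Gamma(b,\Gamma(f_1,f_2))$. The paper's version is terser and omits the $L^2$ bookkeeping you flag at the end, but the computational core is identical.
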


\subsubsection{Covariant derivative}

We now define the covariant derivative, which is usually defined as a $(1,1)$ tensor field. 
Here, we instead define it as a $(0,2)$ field to make use of the space $\mathfrak{X}^1(M)$.
The formula we use comes from the fact that, on a manifold,
$$
\nabla_Y (f \nabla h) = df(Y) \nabla h + f \nabla_Y (\nabla h)
$$
and $g(\nabla_Y (\nabla h), Z) = H(h) (Y,Z)$ by definition, so
$$
g(\nabla_Y (f \nabla h), Z) = df(Y) dh(Z) + f H(h) (Y,Z).
$$

\begin{definition}\label{(0,2) nabla}
If $X = \sum f_i \nabla h_i \in \mathfrak{X}^1(M)$, we define $\nabla X \in \Omega^1(M)^{\otimes2}$ by
$$
\nabla X := \sum \big( d f_i \otimes d h_i + f_i H(h_i) \big),
$$
which we call the \textbf{covariant derivative} of $X$.
\end{definition}

We need $X \in \mathfrak{X}^1(M)$ to ensure that $H(h_i) \in \Omega^1(M)^{\otimes2}$ for all $i$.

\begin{restatable}{reprop}{covariantwelldefined}
\label{covariant well defined}
The covariant derivative $\nabla$ is a well defined map $\mathfrak{X}^1(M) \rightarrow \Omega^1(M)^{\otimes2}$.
\end{restatable}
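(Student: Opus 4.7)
My plan is to establish a representation-free formula for the pairing of $\nabla X$ with a sufficiently rich family of test tensors, which forces $\nabla X$ to depend only on $X$ and not on the particular expansion $\sum f_i \nabla h_i$ chosen. The key identity I aim for, valid for any $Y \in \mathfrak{X}(M)$ and any $f \in W^{2,\infty}(M)$, is
$$
g(\nabla X, Y^\flat \otimes df) \;=\; Y(X(f)) \;-\; H(f)(Y, X),
$$
whose right-hand side is manifestly intrinsic to $X$. This mirrors the manifold identity $Y\langle X,\nabla f\rangle = \langle \nabla_Y X,\nabla f\rangle + H(f)(Y,X)$, so the strategy is to reverse-engineer that computation using only the algebraic tools available in the Markov triple setting.

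First I will unfold the definitions: from $\nabla X = \sum(df_i\otimes dh_i + f_i H(h_i))$ and the bilinear form on $\Omega^1(M)^{\otimes 2}$, one obtains directly
$$
g(\nabla X, Y^\flat \otimes df) = \sum_i Y(f_i)\,\Gamma(h_i,f) + \sum_i f_i H(h_i)(Y,\nabla f).
$$
Next I expand $Y(X(f))$ using that $Y$ is a derivation (Definition \ref{derivation def}) applied to $X(f)=\sum f_i\Gamma(h_i,f)$, and invoke the Hessian product rule (Proposition \ref{hessian product rule}) to rewrite $Y(\Gamma(h_i,f)) = H(h_i)(Y,\nabla f) + H(f)(Y,\nabla h_i)$, which is legal because $h_i, f \in W^{2,\infty}(M)$. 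Substituting back, the $H(h_i)(Y,\nabla f)$ terms cancel, and the remaining sum $\sum_i f_i H(f)(Y,\nabla h_i)$ collapses by $\A$-bilinearity of $H(f)$ in its slots to $H(f)(Y, \sum_i f_i\nabla h_i) = H(f)(Y,X)$. This produces the desired formula.

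Once the identity is established, well-definedness follows: if $A$ and $A'$ are the candidate tensors arising from two representations of the same $X$, then $g(A-A', Y^\flat \otimes df) = 0$ pointwise for every $Y \in \mathfrak{X}(M)$ and every $f \in W^{2,\infty}(M)$. Extending by $\A$-linearity in the first slot and integrating against arbitrary $h \in \A$ in the second slot via the inner product on $\Omega^1(M)^{\otimes 2}$, the pairing $\langle A-A', \xi\rangle$ vanishes for $\xi$ in the submodule spanned by $\{h\, Y^\flat \otimes df : h \in \A, f \in W^{2,\infty}(M)\}$. Assuming this submodule is dense in $\Omega^1(M)^{\otimes 2}$ with respect to the induced norm — which uses the standing regularity that $W^{2,\infty}(M)$ approximates $\A$ in the first-order Sobolev seminorm, e.g. under the RCD hypothesis noted in Theorem \ref{W22 A0 equivalence gigli} — Cauchy-Schwarz gives $\|A-A'\|=0$, and therefore $A = A'$ modulo the zero-norm subspace in $\Omega^1(M)^{\otimes 2}$.

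The main obstacle is the final density/spanning step: algebraic equality in the tensor product $\Omega^1(M)\otimes_\A \Omega^1(M)$ is strictly stronger than equality after quotienting by the zero-norm subspace, and the representation-free formula only handles the second slot when it lies in $W^{2,\infty}(M)$. The clean way around this is to interpret "well-defined" as equality in the Hausdorff completion (which is how the norm is ultimately used downstream), so that vanishing on a dense subspace of test tensors suffices; otherwise one would need to verify directly, by a symmetrisation argument using Proposition \ref{hessian product rule} again, that the difference $A-A'$ has zero Hilbert-Schmidt norm pointwise, which is the more delicate route.
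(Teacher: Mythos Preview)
Your approach is essentially the paper's: both derive a representation-free formula for the pairing of $\nabla X$ against elementary test tensors by invoking the Hessian product rule (Proposition~\ref{hessian product rule}), and then conclude via Cauchy--Schwarz. The paper tests against $f'\,da\otimes db$ with $a,b\in\A$ arbitrary and arrives at
\[
\inp{\nabla X}{f'\,da\otimes db}=\int f'\Big[\Gamma\big(g(X,\nabla a),b\big)-H(a)(X,\nabla b)\Big]\,d\mu,
\]
where $H(a)$ is understood in the weak sense, i.e.\ as an operator $\mathfrak{X}(M)^{\otimes 2}\to\A$, which is available for \emph{every} $a\in\A$ by the remark immediately after Definition~\ref{hessian def}. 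Since these test tensors already span $\Omega^1(M)^{\otimes 2}$, no density argument is needed; if $\|X\|=0$ then $g(X,\nabla a)=0$ and $H(a)(X,\nabla b)=0$ almost everywhere, so $\|\nabla X\|=0$.

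The gap you flag in your own argument is therefore self-inflicted. You restrict the second slot to $f\in W^{2,\infty}(M)$ so that $H(f)$ is a genuine tensor, but your identity
\[
g(\nabla X,\,Y^\flat\otimes df)=Y(X(f))-H(f)(Y,X)
\]
only uses $H(f)$ as a bilinear map on vector fields, and that weak version is defined for every $f\in\A$ (and Proposition~\ref{hessian product rule} holds in that weak sense, as its proof is a direct computation from Definition~\ref{hessian def}). Once you allow $f\in\A$, the test tensors $h\,Y^\flat\otimes df$ span $\Omega^1(M)^{\otimes 2}$ on the nose, the RCD-type density hypothesis becomes unnecessary, and your proof coincides with the paper's up to which tensor slot carries the weak Hessian.
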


We saw earlier that $H(f)$ is well defined as an operator $\mathfrak{X}(M)^{\otimes 2}\rightarrow \A$ for all $f \in \A$, even if that operator is not a tensor in $L^2(\Omega^1(M)^{\otimes2})$.
By extension, the formula for $\nabla X$ above defines a \q{weak} derivative $\nabla X: \mathfrak{X}(M)^{\otimes 2} \rightarrow \A$ by
$$
\nabla X (Y,Z) = \sum \big( Y(f_i) Z(h_i) + f_i H(h_i)(Y,Z) \big)
$$
for all $X = \sum f_i \nabla h_i \in \mathfrak{X}(M)$, although we cannot guarantee that this operator defines a tensor in $L^2(\Omega^1(M)^{\otimes2})$ unless $X \in \mathfrak{X}^1(M)$.
We can now define the $(1,1)$ tensor field version of the covariant derivative (see Figure \ref{fig:f6} (d)).

\begin{definition}\label{(1,1) nabla}
If $X = \sum f_i \nabla h_i \in \mathfrak{X}^1(M)$, we define $\nabla_Y X \in \mathfrak{X}(M)$ by
$$
\nabla_Y X := \sum \big( Y (f_i) \nabla h_i + f_i [H(h_i)(Y, \cdot)]^\sharp \big)
$$
for all $Y \in \mathfrak{X}(M)$.
If $f\in\A$, we also define $\nabla_Y(f) = Y(f)$.
\end{definition}

Notice that $g(\nabla_Y X, Z) = \nabla X (Y,Z)$ for all $Z \in \mathfrak{X}(M)$, so $X \mapsto \nabla_Y X$ is well defined by Proposition \ref{covariant well defined}.
As with the Hessian and $\nabla$, we can define a more general \q{weak} derivative $\nabla_Y X$ as a derivation $\A \rightarrow \A$ by
$$
\nabla_Y X (f) = g(\nabla_Y X, \nabla f) = \nabla X (Y,\nabla f)
$$
for all $X \in \mathfrak{X}(M)$, although this may not define a vector field $\nabla_Y X \in \mathfrak{X}(M)$ unless $X \in \mathfrak{X}^1(M)$.
We check that this definition for $\nabla$ satisfies the standard properties.

\begin{restatable}{reprop}{nablaaffineconnection}
\label{nabla affine connection}
The covariant derivative $\nabla$ is an affine connection, meaning $\nabla X(Y,Z)$ is $\A$-linear in $Y$ and $Z$, linear in $X$, and satisfies the Leibniz rule
$$
\nabla(fX) = df \otimes X^\flat + f\nabla X.
$$
In particular, $\nabla_Y X$ is $\A$-linear in $Y$, and $\nabla_Y (fX) = Y(f) X + f \nabla_Y X$.
\end{restatable}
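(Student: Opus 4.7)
The plan is to read each claimed property directly off the defining formula $\nabla X = \sum_i(df_i \otimes dh_i + f_i H(h_i))$, using Proposition \ref{covariant well defined} to guarantee that identities derived in a particular representation $X = \sum_i f_i \nabla h_i$ actually hold at the level of the tensor. Once well-definedness is in hand, I expect the proof to be essentially bookkeeping rather than genuine content.

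First I would handle the $\A$-linearity of $\nabla X(Y,Z) = g(\nabla X, Y^\flat \otimes Z^\flat)$ in $Y$ and $Z$. This should be automatic: $\nabla X$ lies in the tensor module $\Omega^1(M)^{\otimes 2}$, and the metric on this module is $\A$-linear in each slot by construction (on elementary tensors, $g(f\alpha \otimes \beta, \alpha' \otimes \beta') = f\,g(\alpha,\alpha')\,g(\beta,\beta')$), while $(fY)^\flat = f Y^\flat$ from the musical isomorphism. Linearity in $X$ is even more direct: concatenating representations of $X$ and $X'$ yields a representation of $X+X'$ for which the formula visibly respects additivity, and scaling $X$ by $c \in \R$ scales each term of $\nabla X$ by $c$.

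The one real calculation is the Leibniz rule. Writing $fX = \sum_i (f f_i) \nabla h_i$, the definition of $\nabla$ together with the Leibniz rule for $d_0$ (Proposition \ref{calculus_rules_d0}) gives
\begin{align*}
\nabla(fX) &= \sum_i \bigl( d(f f_i) \otimes dh_i + (f f_i) H(h_i) \bigr) \\
&= \sum_i \bigl( (f\,df_i + f_i\,df) \otimes dh_i + f f_i H(h_i) \bigr) \\
&= f \sum_i \bigl( df_i \otimes dh_i + f_i H(h_i) \bigr) + df \otimes \sum_i f_i\, dh_i \\
&= f \nabla X + df \otimes X^\flat,
\end{align*}
after recognising $X^\flat = \sum_i f_i\, dh_i$ via the musical isomorphism. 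Notice that the Hessian's own Leibniz rule is not needed here, because the functions $h_i$ are untouched when $X$ is multiplied by $f$: the term $f_i H(h_i)$ simply picks up a scalar factor.

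For the corollaries on $\nabla_Y X$, the defining relation $g(\nabla_Y X, Z) = \nabla X(Y, Z)$ together with non-degeneracy of $g$ on vector fields transfers both $\A$-linearity in $Y$ and the Leibniz rule: pairing the identity $\nabla(fX) = df \otimes X^\flat + f \nabla X$ against $(Y,Z)$ and using $df(Y) = Y(f)$ together with $X^\flat(Z) = g(X,Z)$ yields $g(\nabla_Y(fX), Z) = g(Y(f) X + f \nabla_Y X, Z)$ for all $Z$, so $\nabla_Y(fX) = Y(f) X + f \nabla_Y X$. The main (minor) obstacle throughout is just notational discipline---carrying the representation-dependent sums through each computation until one can invoke Proposition \ref{covariant well defined} to pass to the underlying tensor.
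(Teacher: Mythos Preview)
Your proposal is correct and follows essentially the same route as the paper: the paper dispatches the linearity claims in one line (``follow directly from the definition'') and then proves the Leibniz rule on a single term $X = f'\nabla h$ by expanding $d(ff')$ with the Leibniz rule for $d_0$, exactly as you do for a general finite sum. Your write-up is simply more explicit about the $\A$-linearity in $Y,Z$ and the passage to $\nabla_Y X$, but the underlying argument is identical.
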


\begin{restatable}{reprop}{nablapreservesmetric}
\label{nabla preserves metric}
The covariant derivative $\nabla$ is compatible with the metric, meaning 
$$
\nabla_X g(Y,Z) = g(\nabla_X Y, Z) + g(Y, \nabla_X Z).
$$
\end{restatable}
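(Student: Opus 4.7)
The plan is to reduce to generators of $\mathfrak{X}^1(M)$ and then collapse both sides to Proposition \ref{hessian product rule}. Since both sides of the claimed identity are bilinear in $Y, Z$ (with $X$ fixed), it suffices to verify it on typical generators $Y = f \nabla h$ and $Z = g \nabla k$, where $h, k \in W^{2,\infty}(M)$ so that the Hessians involved land in $\Omega^1(M)^{\otimes 2}$ and $\nabla_X Y, \nabla_X Z$ are genuinely vector fields.

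For the left-hand side, $g(Y, Z) = fg\, \Gamma(h, k)$, so applying $\nabla_X = X(\cdot)$ and the Leibniz rule for the derivation $X$ gives
$$
\nabla_X g(Y,Z) = X(f)\, g\, \Gamma(h,k) + f\, X(g)\, \Gamma(h,k) + fg\, X(\Gamma(h,k)).
$$
For the right-hand side, Definition \ref{(1,1) nabla} gives $\nabla_X Y = X(f)\nabla h + f\,[H(h)(X,\cdot)]^\sharp$, and the first-order metric identities together with $g(\alpha^\sharp, W) = \alpha(W)$ yield
$$
g(\nabla_X Y, Z) = X(f)\, g\, \Gamma(h,k) + fg\, H(h)(X, \nabla k),
$$
with the symmetric computation for $g(Y, \nabla_X Z)$. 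Summing these and comparing with the expanded left-hand side, the terms involving $X(f)$ and $X(g)$ match trivially, and the identity reduces to
$$
X(\Gamma(h,k)) = H(h)(X, \nabla k) + H(k)(X, \nabla h).
$$

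This final identity is exactly Proposition \ref{hessian product rule}: that proposition states $d(\Gamma(h,k)) = H(h)(\nabla k, \cdot) + H(k)(\nabla h, \cdot)$, and evaluating both sides on $X$ using $X(\phi) = d\phi(X)$ and the symmetry of $H(f)$ in its two arguments closes the computation. There is no real obstacle beyond bookkeeping with the musical isomorphisms and the symmetries of the Hessian; the substantive input, the product rule for $\Gamma$ expressed via the Hessian, is already available from Proposition \ref{hessian product rule}, which is itself the genuine second-order consequence of the diffusion property of $\Gamma$. Extending from generators to all of $\mathfrak{X}^1(M)$ is then immediate by $\A$-bilinearity, using that $\nabla_X$ is a derivation on $\A$ and $\A$-linear in $X$ (Proposition \ref{nabla affine connection}).
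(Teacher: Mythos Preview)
Your proof is correct and follows essentially the same route as the paper: reduce to generators $Y=f_1\nabla h_1$, $Z=f_2\nabla h_2$, expand $X(g(Y,Z))$ with the Leibniz rule for $X$, and then invoke Proposition \ref{hessian product rule} to handle the $X(\Gamma(h_1,h_2))$ term. One cosmetic issue: you use $g$ both for the coefficient function in $Z=g\nabla k$ and for the metric, which is mildly confusing; the paper avoids this by writing $f_1,f_2,h_1,h_2$.
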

\subsubsection{Lie bracket}

Another notion of \q{differentiating a vector field} comes from the Lie bracket (see Figure \ref{fig:f6} (c)).
The final property needed for the covariant derivative to be a Levi-Civita connection (along with being an affine connection: see Proposition \ref{nabla affine connection}) is that it is also \textit{torsion-free}, a property involving the Lie bracket.
Here, we will take this property as a definition.

\begin{definition}\label{Lie bracket def}
If $X,Y \in \mathfrak{X}^1(M)$ then we define their \textbf{Lie bracket} $[X,Y] \in \mathfrak{X}(M)$ by
$$
[X,Y] := \nabla_X Y - \nabla_Y X.
$$
\end{definition}

The Lie bracket is usually defined as the commutator of $X$ and $Y$ as operators $XY - YX$, although we cannot always guarantee\footnote{The commutator $XY - YX$ is always a derivation, but that is not enough to be a vector field.} that $XY - YX \in \mathfrak{X}(M)$.
However, on $\mathfrak{X}^1(M)$ where the Lie bracket is a well-defined vector field, it does agree with the commutator.

\begin{restatable}{reprop}{liebracketcommutator}
\label{lie bracket commutator}
We have $[X,Y] = XY - YX$ for all $X,Y \in \mathfrak{X}^1(M)$.
\end{restatable}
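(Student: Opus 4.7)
The strategy is to verify the identity as derivations on $\A$: for an arbitrary test function $f \in \A$, I would show that $(\nabla_X Y - \nabla_Y X)(f) = X(Y(f)) - Y(X(f))$. Writing $X = \sum_i f_i \nabla h_i$ and $Y = \sum_j g_j \nabla k_j$ with $h_i, k_j \in W^{2,\infty}(M)$, the left-hand side expands via Definition \ref{(1,1) nabla} (using that $[H(h_i)(Y,\cdot)]^\sharp(f) = H(h_i)(Y, \nabla f)$ by the musical duality) to
$$
\sum_j X(g_j)\Gamma(k_j, f) - \sum_i Y(f_i)\Gamma(h_i, f) + \sum_j g_j H(k_j)(X, \nabla f) - \sum_i f_i H(h_i)(Y, \nabla f).
$$

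To unpack the commutator, I would twice iterate the derivations together with the Leibniz rule, using that for any vector field $V = \sum v_l \nabla w_l$ one has $V(\phi) = \sum v_l \Gamma(w_l, \phi)$. The nontrivial step is evaluating $X(\Gamma(k_j, f))$: here I would invoke Proposition \ref{hessian product rule}, namely $d(\Gamma(k_j, f)) = H(k_j)(\nabla f, \cdot) + H(f)(\nabla k_j, \cdot)$, and then contract with $X^\flat$ via $X(\phi) = g(d\phi, X^\flat)$ to obtain $X(\Gamma(k_j, f)) = H(k_j)(\nabla f, X) + H(f)(\nabla k_j, X)$. The same procedure applied to $Y(\Gamma(h_i, f))$ handles the other side.

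The key cancellation emerges when subtracting: the resulting $H(f)$ contributions combine into $\sum_j g_j H(f)(\nabla k_j, X) - \sum_i f_i H(f)(\nabla h_i, Y) = H(f)(Y, X) - H(f)(X, Y)$ by $\A$-bilinearity in each slot, and this vanishes because $H(f)$ is symmetric in its two arguments (immediate from the defining formula in Definition \ref{hessian def}, which is symmetric under $a \leftrightarrow b$). What remains matches the expansion of $(\nabla_X Y - \nabla_Y X)(f)$ term by term after invoking the symmetry of $H(k_j)$ and $H(h_i)$ in their two slots.

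The main obstacle is careful bookkeeping rather than any deep idea: I must track which slot of each Hessian is contracted with which vector field, and apply the musical duality consistently each time I convert between a derivation acting on a function and a metric pairing. Crucially, $X, Y \in \mathfrak{X}^1(M)$ guarantees that all the Hessians involved live in $\Omega^1(M)^{\otimes 2}$, so every contraction defines an honest element of $\A$ and the "weak" interpretations of $\nabla_Y X$ coincide with the strong ones — the argument would fail at the level of well-definedness without this hypothesis. Conceptually, the cancellation expresses the classical fact that symmetry of the Hessian is equivalent to the covariant derivative being torsion-free.
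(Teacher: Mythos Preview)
Your proposal is correct and follows essentially the same route as the paper: expand the iterated action $X(Y(f))$ via the Leibniz rule, apply Proposition \ref{hessian product rule} to rewrite $X(\Gamma(k_j,f))$ as a sum of two Hessian contractions, and cancel the $H(f)(X,Y)-H(f)(Y,X)$ terms using symmetry of the Hessian. The paper streamlines the bookkeeping by first establishing the clean intermediate identity $X(Y(a)) = \nabla_X Y(a) + H(a)(X,Y)$ for a single-term $Y=f\nabla h$ and then invoking linearity, whereas you work with the full sums from the start, but the substance is identical.
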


\begin{figure}[!ht]
  \centering
  \captionsetup{width=0.66\linewidth}
  \subfloat[][$X$]{\includegraphics[width=.48\textwidth]{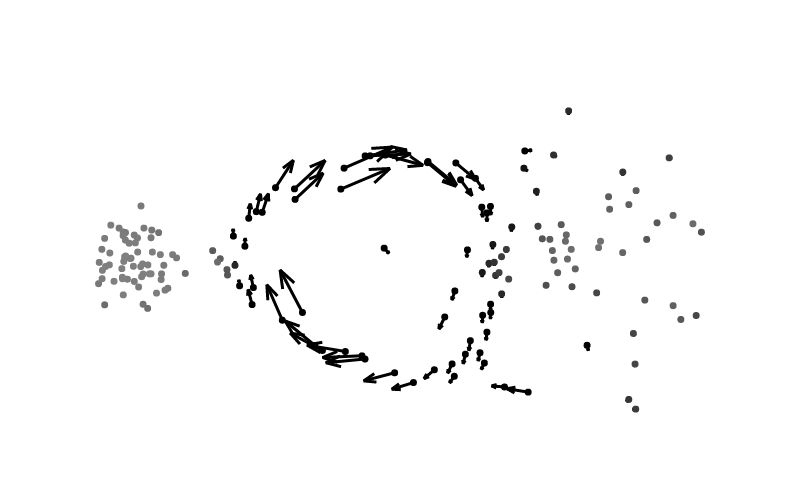}}\quad
  \subfloat[][$Y$]{\includegraphics[width=.48\textwidth]{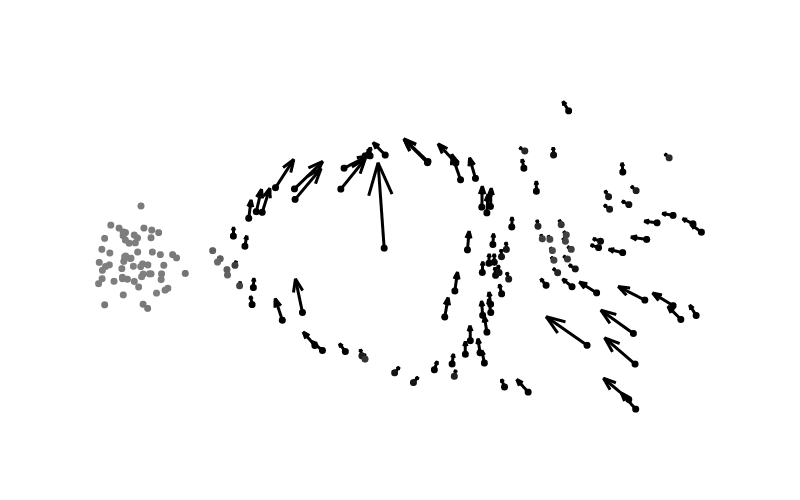}}\\
  \subfloat[][${[X,Y]}$]{\includegraphics[width=.48\textwidth]{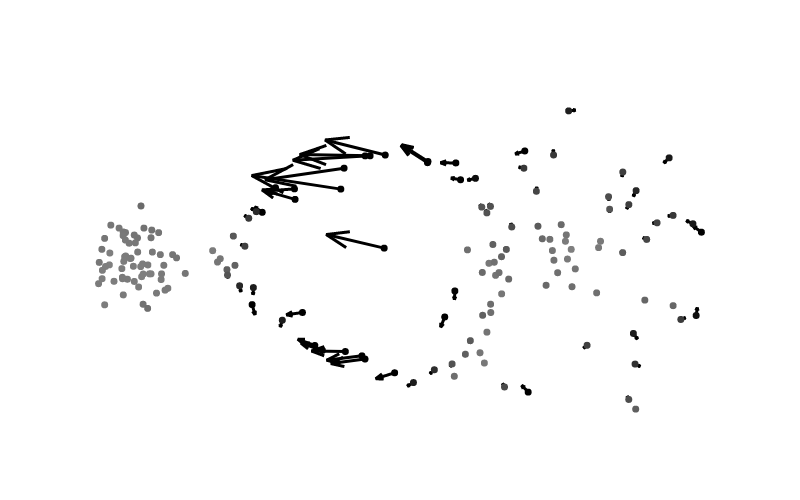}}\quad
  \subfloat[][$\nabla_X Y$]{\includegraphics[width=.48\textwidth]{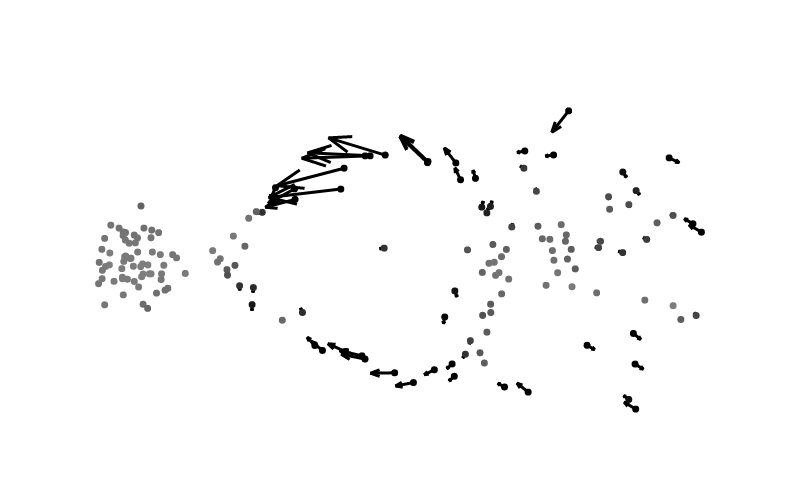}}
  \caption{\textbf{Lie bracket and covariant derivative.} 
  Given two vector fields $X$ and $Y$ (a, b), both $[X,Y]$ and $\nabla_X Y$ define a notion of \q{differentiating Y with respect to X} (c, d).
  They measure the \q{acceleration} or change in \textit{relative} velocity of $Y$ in the direction of $X$, and produce similar pictures. 
  Following the cycle of $X$ clockwise, the flow of $Y$ on the left and right sides is constant relative to $X$, and so $[X,Y]$ and $\nabla_X Y$ are zero here. 
  But at the top and bottom $Y$ changes orientation (relative to $X$): it accelerates at the bottom of the cycle and decelerates over the top, and $[X,Y]$ and $\nabla_X Y$ increase in the direction of this force.}
  \label{fig:f6}
\end{figure}

Notice, again, that we can define a \q{weak} Lie bracket as a derivation $[X,Y] : \A \rightarrow \A$ by $f \mapsto X(Y(f)) - Y(X(f))$, although this does not necessarily define a vector field without stronger assumptions.
We can also define the action of a form on the Lie bracket in this weak sense, by (in the example of 1-forms),
\begin{equation}
\label{weak lie bracket eq}
fdh([X,Y]) = f\big(X(Y(h)) - Y(X(h)) \big).
\end{equation}
This \q{weak} notion of action will be crucial in Proposition \ref{ext derivative invariant formula} below.
We find that the usual product rule holds for the Lie bracket.

\begin{restatable}{reprop}{liebracketleibniz}
\label{lie bracket leibniz}
We have $[X,fY] = X(f)Y + f[X,Y]$ and $[fX,Y] = - Y(f)X + f[X,Y]$.
\end{restatable}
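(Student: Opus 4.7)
The plan is a short computation directly from the definition $[A,B] := \nabla_A B - \nabla_B A$ together with the affine-connection properties already established in Proposition \ref{nabla affine connection}. Before computing, I would first verify that the brackets appearing in the statement are actually defined: writing $Y = \sum g_i \nabla h_i \in \mathfrak{X}^1(M)$ with $h_i \in W^{2,\infty}(M)$, multiplying by $f \in \A$ gives $fY = \sum (f g_i) \nabla h_i$, which still lies in $\mathfrak{X}^1(M)$ because the twice-differentiated functions $h_i$ are unchanged. The same check applies to $fX$, so both $[X, fY]$ and $[fX, Y]$ are well-defined vector fields in $\mathfrak{X}(M)$ via Definition \ref{Lie bracket def}.

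For the first identity I would invoke Proposition \ref{nabla affine connection} twice. The Leibniz rule in the second slot gives $\nabla_X(fY) = X(f)Y + f \nabla_X Y$, while the $\A$-linearity in the first slot gives $\nabla_{fY} X = f \nabla_Y X$. Subtracting,
$$
[X, fY] = \nabla_X(fY) - \nabla_{fY} X = X(f)Y + f \nabla_X Y - f \nabla_Y X = X(f)Y + f[X,Y].
$$

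The second identity then follows by antisymmetry $[X,Y] = -[Y,X]$, which is immediate from Definition \ref{Lie bracket def}. Applying the identity just proved with the roles of $X$ and $Y$ exchanged gives $[Y, fX] = Y(f)X + f[Y,X]$, and negating produces
$$
[fX, Y] = -[Y, fX] = -Y(f)X - f[Y,X] = -Y(f)X + f[X,Y].
$$

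I do not anticipate any genuine obstacle here: all of the analytic and tensorial content has already been absorbed into Proposition \ref{nabla affine connection}, and the present proposition is essentially the classical Lie-bracket identity read off from a connection that is linear in one slot and Leibniz in the other. The only point requiring attention is the brief well-definedness check at the outset, since $\mathfrak{X}^1(M)$ is only a submodule of $\mathfrak{X}(M)$ and one should confirm that multiplication by $f \in \A$ preserves membership; as noted, the structure of $\mathfrak{X}^1(M)$ — constraining only the twice-differentiated factors — makes this immediate.
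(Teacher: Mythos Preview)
Your proof is correct and follows essentially the same approach as the paper: both compute directly from the definition $[X,fY] = \nabla_X(fY) - \nabla_{fY}X$ and invoke the Leibniz rule and $\A$-linearity from Proposition \ref{nabla affine connection}. Your derivation of the second identity via antisymmetry is a minor stylistic variant of the paper's ``likewise'', and your well-definedness check for $fY \in \mathfrak{X}^1(M)$ is a helpful addition that the paper leaves implicit.
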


The above properties of $\nabla$ and the Lie bracket lead, by a standard argument, to the \textit{Koszul formula}
\begin{equation*}
    \begin{split}
        g(\nabla_X Y,Z) = \nabla Y (X,Z) = \frac{1}{2}\Big(
        & X(g(Y,Z)) + Y(g(Z,X)) - Z(g(X,Y)) \\
        &+ g([X,Y],Z) - g([Y,Z],X) + g([Z,X],Y) \Big),
    \end{split}
\end{equation*}
which shows that $\nabla$ is the unique affine linear connection (in the sense of Proposition \ref{nabla affine connection}) which is torsion-free (satisfies Definition \ref{Lie bracket def}) and preserves the metric (Proposition \ref{nabla preserves metric}), and so is the \textit{Levi-Civita connection}.

We can use the Lie bracket to derive the following formula to evaluate the Hessian.

\begin{restatable}{reprop}{hessianevalformulae}
\label{hessian eval formulae}
The Hessian satisfies
$$
H(f)(X,Y) = \frac{1}{2}\Big(g(X, [Y,\nabla f]) + g(Y,[X,\nabla f]) + \Gamma(f, g(X,Y)) \Big).
$$
\end{restatable}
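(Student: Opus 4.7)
The strategy is to verify the identity first on pairs of gradient vector fields, where everything unwinds directly from the definition of the Hessian, and then extend to general $X,Y \in \mathfrak{X}(M)$ by $\A$-linearity.

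Write $X = \nabla a$ and $Y = \nabla b$ with $a,b \in W^{2,\infty}(M)$. The key observation is that for any vector field $Z$ one has $g(\nabla a, Z) = Z(a)$, because of the musical isomorphism together with $g(da, Z^\flat) = Z(a)$. Applied to $Z = [Y,\nabla f]$, and using the weak Lie bracket as a derivation as in equation (\ref{weak lie bracket eq}), this gives
\begin{equation*}
g(X, [Y,\nabla f]) = [Y,\nabla f](a) = Y(\nabla f(a)) - \nabla f(Y(a)) = \Gamma(b, \Gamma(f,a)) - \Gamma(f, \Gamma(a,b)),
\end{equation*}
where I used $Y(a) = g(\nabla b, \nabla a) = \Gamma(a,b)$ and $\nabla f(a) = \Gamma(f,a)$. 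Swapping the roles of $a$ and $b$ gives the symmetric expression for $g(Y, [X,\nabla f])$. Adding these two and adding $\Gamma(f, g(X,Y)) = \Gamma(f, \Gamma(a,b))$ yields exactly $2 H(f)(\nabla a, \nabla b)$ by Definition \ref{hessian def}. Dividing by 2 gives the identity for gradient fields.

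To extend to general vector fields, I would check that both sides are $\A$-linear in $X$ (the argument for $Y$ is identical by symmetry). The left side is obviously $\A$-linear since $H(f) \in \Omega^1(M)^{\otimes 2}$. For the right side, replacing $X$ by $\phi X$ produces three correction terms: from Proposition \ref{lie bracket leibniz}, $[\phi X, \nabla f] = \phi[X,\nabla f] - \nabla f(\phi) X$, contributing $-\Gamma(f,\phi) g(X,Y)$; and from the diffusion property applied to $\Gamma(f, \phi g(X,Y))$, one picks up $+\Gamma(f,\phi) g(X,Y)$. These correction terms cancel, leaving $\phi$ times the original right-hand side. Since any $X \in \mathfrak{X}(M)$ is a finite sum $\sum f_i \nabla h_i$, the identity follows from the gradient case by $\A$-linearity.

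The only real subtlety is bookkeeping: one must be careful that $[Y,\nabla f]$ is being used in the weak derivation sense of equation (\ref{weak lie bracket eq}), since $\nabla f$ need not lie in $\mathfrak{X}^1(M)$, so the pairing $g(X, [Y,\nabla f])$ should be read as $[Y,\nabla f]$ acting on the test function determining $X$. With this understanding, no regularity beyond $f \in \A$ is required and the computation goes through cleanly. The main step is really the unfolding of $g(\nabla a, [Y,\nabla f])$ into iterated carrés du champ, which is where the Hessian's defining combination appears.
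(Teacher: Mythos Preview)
Your proof is correct and follows essentially the same approach as the paper: a direct computation on elementary vector fields followed by linearity. The only organizational difference is that the paper works with $X=a_0\nabla a_1$, $Y=b_0\nabla b_1$ in one pass (absorbing the $\A$-linearity check into the main calculation), whereas you first treat pure gradients and then verify $\A$-linearity separately; your version is arguably cleaner. One small slip: in the base case you require $a,b\in W^{2,\infty}(M)$, but nothing in the computation uses this --- only carr\'e du champ identities for $a,b\in\A$ --- and indeed you need the gradient case for all $a,b\in\A$ to reach arbitrary $X=\sum f_i\nabla h_i\in\mathfrak{X}(M)$, so drop that hypothesis.
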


\subsubsection{Covariant derivative of forms}

We can use these conditions, as well as duality, to extend the definition of $\nabla_X$ to other tensors.
We first define the following notions of \q{differentiable} forms analogous to $\mathfrak{X}^1(M)$.

\begin{definition}
We define the \textbf{space of differentiable $k$-forms}
$$
\Omega^{k,1}(M) := \left\{\sum f_i dh^i_1 \wedge \dots \wedge dh^i_k \in \Omega^k(M) : h^i_j \in W^{2,\infty}(M) \text{ for all } i,j\right\}.
$$
\end{definition}

So $\Omega^{1,1}(M)$ is exactly the $\flat$-dual of $\mathfrak{X}^1(M)$.
We can define the covariant derivative on 1-forms by this duality.

\begin{definition}
If $\alpha \in \Omega^{1,1}(M)$, then define $\nabla_X \alpha := \big(\nabla_X(\alpha^\sharp)\big)^\flat = \nabla(\alpha^\sharp)(X,\cdot)$.
\end{definition}

We can derive the usual formula for the action of $\nabla_X \alpha$ on a vector field $Y$, using the fact that $\nabla_X$ preserves the metric, as
\begin{equation*}
\begin{split}
\nabla_X \alpha (Y)
&= g(\nabla_X(\alpha^\sharp), Y) \\
&= \nabla_X g(\alpha^\sharp, Y) - g(\alpha^\sharp, \nabla_X Y) \\
&= X(\alpha(Y)) - \alpha(\nabla_X Y).
\end{split}
\end{equation*}
This action formula defines a \q{weak} notion of $\nabla_X \alpha$ as an operator $\mathfrak{X}(M)\rightarrow\A$ for any $\alpha \in \Omega^1(M)$, but $\nabla_X \alpha$ is not necessarily a 1-form in $\Omega^1(M)$ unless $\alpha \in \Omega^{1,1}(M)$.
We can further extend $\nabla_X$ to tensor products in a way that satisfies the Leibniz rule, so in general
$$
\nabla_X(\xi \otimes \zeta) = \nabla_X(\xi) \otimes \zeta + \xi \otimes \nabla_X(\zeta).
$$
In particular, we obtain the following definition.

\begin{definition}
$\nabla_X$ is defined inductively on $\alpha, \beta \in \Omega^{k,1}(M)$ by
$$
\nabla_X(\alpha \wedge \beta) := \nabla_X(\alpha) \wedge \beta + \alpha \wedge \nabla_X(\beta).
$$
\end{definition}

It is straightforward to check that $\nabla_X \alpha$ is still linear in $\alpha$ and $\A$-linear in $X$, and satisfies the Leibniz rule in $\alpha$.

\subsection{Exterior derivative and codifferential}
\label{subsec ext derivative}

We can now introduce the exterior derivative on higher-order forms.
The derivative is a fundamental object in geometry, although we have waited this long to define it to employ the Lie bracket in the following proofs.

\begin{definition}
The \textbf{exterior derivative} is the linear map $d_k : \Omega^k(M) \rightarrow \Omega^{k+1}(M)$ defined by
$$
f dh_1\wedge\ddd\wedge dh_k \mapsto df \wedge dh_1\wedge\ddd\wedge dh_k.
$$
\end{definition}

\begin{restatable}{reprop}{extderivativeinvariantformula}
\label{ext derivative invariant formula}
If $X_0,...,X_k \in \mathfrak{X}(M)$ and $\alpha \in \Omega^k(M)$ then $d_k \alpha$ satisfies
$$
d_k \alpha (X_0,...,X_k) = \sum_{i=0}^k (-1)^i X_i(\alpha(...,\hat{X_i},...)) + \sum_{i<j} (-1)^{i+j} \alpha([X_i, X_j],...,\hat{X_i},...,\hat{X_j},...),
$$
where $[X_i, X_j]$ is defined in the weak sense\footnote{See equation \ref{weak lie bracket eq}.}.
In particular, if $\|\alpha\| = 0$ then $\|d_k \alpha\| = 0$, so $d_k$ is a well defined map.
\end{restatable}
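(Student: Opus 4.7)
The plan is to establish the invariant formula by a direct calculation on decomposable forms, and then to deduce the ``in particular'' from it by combining Cauchy--Schwarz with a Hessian-based rewriting of the weak Lie bracket.

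I would begin by reducing, by linearity in $\alpha$, to the case $\alpha = f\,dh_1\wedge\cdots\wedge dh_k = f\omega$. The LHS is then the $(k{+}1){\times}(k{+}1)$ determinant $d\alpha(X_0,\ldots,X_k) = \det\bigl(X_a(g_b)\bigr)$ with $g_0=f,\ g_b=h_b$; Laplace expansion along the first column gives $\sum_i(-1)^i X_i(f)\,\omega(X_0,\ldots,\widehat{X_i},\ldots,X_k)$. Distributing each $X_i$ over $\alpha(\ldots)=f\cdot\omega(\ldots)$ on the RHS by Leibniz reproduces exactly this expression, so it remains to check the residual identity
\[
\sum_i (-1)^i X_i\bigl(\omega(\ldots,\widehat{X_i},\ldots)\bigr) + \sum_{i<j}(-1)^{i+j}\omega\bigl([X_i,X_j],\widehat{X_i},\widehat{X_j},\ldots\bigr) = 0.
\]
To verify this I would expand $\omega(\ldots,\widehat{X_i},\ldots)$ as an alternating sum of products $\prod_r X_{\sigma(r)}(h_r)$ and apply $X_i$ by Leibniz, producing ``second-order'' terms $X_i(X_j(h_r))\prod_{s\neq r}X_{\tau(s)}(h_s)$. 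After summing with sign $(-1)^i$ these pair with their $(i,j)$-swapped partners into weak Lie brackets $(X_iX_j - X_jX_i)(h_r) = [X_i,X_j](h_r)$, whose prefactors match the second sum up to sign, so everything cancels. This can be formalised by induction on $k$ or by a direct permutation argument.

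For the ``in particular'', I would first observe that $\|d\alpha\|=0$ iff $g(d\alpha,d\alpha)=0$ a.e.\ by positivity of $g$ (Proposition \ref{positive_metric}), and by pointwise Cauchy--Schwarz this reduces to showing $g(d\alpha,\gamma) = 0$ a.e.\ for simple test forms $\gamma = f\,dh_0'\wedge\cdots\wedge dh_k'$. Since $g(d\alpha,\gamma) = f\cdot d\alpha(\nabla h_0',\ldots,\nabla h_k')$, I would apply the invariant formula at $X_i = \nabla h_i'$. The derivation sum vanishes a.e.\ immediately: each $\alpha(\nabla h_0',\ldots,\widehat{\nabla h_i'},\ldots,\nabla h_k') = g(\alpha,\,dh_0'\wedge\cdots\wedge\widehat{dh_i'}\wedge\cdots\wedge dh_k')$ is zero a.e.\ by pointwise Cauchy--Schwarz applied to $g(\alpha,\alpha)=0$, so it is the zero element of $\A$ and $\nabla h_i'$ applied to it returns zero.

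The hard part will be the Lie bracket sum, because $W := [\nabla h_i',\nabla h_j']$ is only a weak derivation and cannot be paired with $\alpha$ directly via the metric. My plan is to restrict to $h_i',h_j' \in W^{2,\infty}(M)$ (which still gives a class of test forms sufficient to control $\|d\alpha\|$) and use Definition \ref{hessian def} to establish
\[
[\nabla h_i',\nabla h_j'](h) \;=\; H(h_j')(\nabla h_i',\nabla h) \;-\; H(h_i')(\nabla h_j',\nabla h),
\]
obtained by antisymmetrising two instances of the Hessian identity in $i,j$ so that the $\Gamma(h,\Gamma(h_i',h_j'))$ terms cancel by symmetry of $\Gamma$. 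Writing $H(h_j') = \sum_s u_s\,da_s\otimes db_s \in \Omega^1(M)^{\otimes 2}$ and substituting into the Laplace expansion of $\alpha(W,\nabla h_0',\ldots)$ along the $W$-column then rewrites it as $\sum_s u_s\,\Gamma(a_s,h_i')\cdot\alpha(\nabla b_s,\nabla h_0',\ldots)$, and each inner factor vanishes a.e.\ by the same Cauchy--Schwarz argument. A density argument extends the conclusion to all $h_i'\in\A$, giving $g(d\alpha,\gamma)=0$ a.e.\ on a spanning set and hence $\|d\alpha\|=0$.
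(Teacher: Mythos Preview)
Your argument for the invariant formula is the paper's: reduce to $\alpha=f\omega$ with $\omega=dh_1\wedge\cdots\wedge dh_k$, Laplace-expand $d\alpha(X_0,\ldots,X_k)$ along the first column to produce $\sum_i(-1)^iX_i(f)\,\omega(\ldots,\widehat{X_i},\ldots)$, and then verify the residual identity $T_\omega=0$ by pairing the second-order terms $X_iX_j(h_r)$ with their $(i,j)$-swaps into weak brackets. The paper carries out exactly this permutation argument in full.

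Where you diverge is in the ``in particular''. The paper is much terser: having shown $d\alpha(X_0,\ldots,X_k)=T_\alpha(X_0,\ldots,X_k)$ at the level of representatives, it applies pointwise Cauchy--Schwarz to get $\alpha(Y_1,\ldots,Y_k)=0$ a.e.\ for any $k$ vector fields, and then asserts in one line that $T_\alpha=0$ a.e., so $\langle d\alpha,\beta\rangle=\int f\,T_\alpha(\nabla h_0',\ldots,\nabla h_k')\,d\mu=0$. It does not isolate the bracket terms, does not invoke the Hessian, and does not restrict the test functions. Your caution about $\alpha([X_i,X_j],\ldots)$ is reasonable---the weak bracket need not lie in $\mathfrak{X}(M)$, so Cauchy--Schwarz does not bound that term directly---and your Hessian identity rewriting it as an $\A$-linear combination of honest vector-field evaluations is correct and pretty. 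But it buys rigor at the cost of a new hypothesis: your closing density step needs test forms built from $h_i'\in W^{2,\infty}(M)$ to suffice for detecting $\|d\alpha\|$, and the paper nowhere establishes that $W^{2,\infty}$ is dense in $\A$ (Theorem~\ref{W22 A0 equivalence gigli} only gives $\A=W^{2,2}$ on RCD spaces, which is weaker). So your route patches one soft spot while introducing another; the paper's route simply does not acknowledge the first.
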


As with $d_0$, we will usually drop the subscript.
We immediately see that $d^2 = 0$, because
$$
d^2(fdh_1\wedge\dots\wedge dh_k) = d(1) \wedge df \wedge dh_1\wedge\dots\wedge dh_k
$$
and $d(1) = 0$.
We also find that the exterior derivative satisfies the Leibniz rule.

\begin{restatable}{reprop}{leibnizforextderivative}
\label{leibniz for ext derivative}
If $\alpha \in \Omega^k(M)$ and $\beta \in \Omega^l(M)$ then
$$
d(\alpha \wedge \beta) = d\alpha \wedge \beta + (-1)^k \alpha \wedge d\beta.
$$
\end{restatable}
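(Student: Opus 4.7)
The plan is to verify the identity first on pure elementary forms of the shape $\alpha = f\,dh_1 \wedge \cdots \wedge dh_k$ and $\beta = f'\,dh_1' \wedge \cdots \wedge dh_l'$, and then extend by bilinearity. Well-definedness of $d$ at each stage (i.e.\ descent to the quotients $\Omega^k(M)$ and $\Omega^{k+l}(M)$) is already guaranteed by Proposition~\ref{ext derivative invariant formula}, so the argument reduces to a purely formal manipulation at the level of $\A \otimes \bigwedge \A$.

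On elementary forms, the definitions of $\wedge$ and $d$ give
\begin{equation*}
\alpha \wedge \beta = ff'\, dh_1 \wedge \cdots \wedge dh_k \wedge dh_1' \wedge \cdots \wedge dh_l',
\end{equation*}
and hence
\begin{equation*}
d(\alpha \wedge \beta) = d(ff') \wedge dh_1 \wedge \cdots \wedge dh_k \wedge dh_1' \wedge \cdots \wedge dh_l'.
\end{equation*}
The Leibniz rule for $d_0$ from Proposition~\ref{calculus_rules_d0} expands $d(ff') = f\,df' + f'\,df$, splitting this into two terms. The term containing $f'\,df$ is already $d\alpha \wedge \beta$ by the definition of $d_k$ and $\wedge$. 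In the term containing $f\,df'$, I move $df'$ to the right past the $k$ one-forms $dh_1, \ldots, dh_k$ using the skew-symmetry of the wedge product on $\Omega^1(M)$, which costs a sign $(-1)^k$. After this move the term becomes $(-1)^k \alpha \wedge d\beta$, yielding the claimed identity on elementary forms.

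The extension to general $\alpha$ and $\beta$ follows because both sides of the identity are bilinear in $(\alpha, \beta)$: $d$ is linear by definition, $\wedge$ is bilinear by Proposition~\ref{wedge_well_defined}, and arbitrary forms are finite linear combinations of elementary ones. I would finally note that the sign convention is consistent with the usual graded Leibniz rule and that no delicate analytic passage to the completion is required, since all operations remain within $\Omega^k(M)$ and $\Omega^{k+l}(M)$.

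The only real subtlety is the bookkeeping with the sign $(-1)^k$; there is no analytic obstacle here, because well-definedness of $d$ on the quotient spaces has already been established elsewhere. So I do not expect a significant obstacle --- the proof is essentially the classical one for differential forms on a manifold, transplanted to this quotient-of-tensor-algebra setting, with the calculus rules for $d_0$ doing all the work.
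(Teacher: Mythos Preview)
Your proof is correct and follows essentially the same approach as the paper: verify the identity on elementary forms using the Leibniz rule for $d_0$ (Proposition~\ref{calculus_rules_d0}) together with the antisymmetry of the wedge to produce the sign $(-1)^k$, then extend by bilinearity. The paper's argument is identical in structure and detail, differing only in notation.
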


We can also introduce the dual operator to $d$, known as the \textit{codifferential} $\partial$, which we define as follows.

\begin{definition}
The \textbf{codifferential} is the linear map $\partial_k : \Omega^{k,1}(M) \rightarrow \Omega^{k-1}(M)$ defined by
\begin{equation*}
\begin{split}
f dh_1 \wedge \dots \wedge dh_k
&\mapsto \sum_{i=1}^k (-1)^i \big( \Gamma(f, h_i) - f L(h_i) \big) dh_1 \wedge \dots \wedge \hat{dh_i} \wedge \dots \wedge dh_k \\
&\qquad + \sum_{i<j} (-1)^{i+j} f [\nabla h_i, \nabla h_j]^\flat \wedge dh_1 \wedge \dots \wedge \hat{dh_i} \wedge \dots \wedge \hat{dh_j} \wedge \dots \wedge dh_k.
\end{split}
\end{equation*}
\end{definition}

\begin{restatable}{reprop}{codiffwelldefined}
\label{codiff well defined}
The codifferential satisfies $\inp{\partial_k \alpha}{\beta} = \inp{\alpha}{d_{k-1} \beta}$ for all $\beta$, and so, in particular, is a well defined map.
\end{restatable}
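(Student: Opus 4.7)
The plan is to verify the stated adjoint identity on elementary forms and then deduce well-definedness as a corollary. By bilinearity of the inner product, $d_{k-1}$, and the formula defining $\partial_k$, it suffices to check
\[
\inp{\partial_k \alpha}{\beta} = \inp{\alpha}{d_{k-1}\beta}
\]
for $\alpha = f\, dh_1 \wedge \cdots \wedge dh_k \in \Omega^{k,1}(M)$ and $\beta = f'\, dh'_1 \wedge \cdots \wedge dh'_{k-1}$. Once this is established, well-definedness is automatic: if $\|\alpha\| = 0$ then by Cauchy--Schwarz (Proposition \ref{positive_metric}) $\inp{\alpha}{d_{k-1}\beta} = 0$ for every $\beta$, so $\inp{\partial_k\alpha}{\beta}=0$ for every $\beta$, which forces $\|\partial_k \alpha\|=0$, and hence $\partial_k \alpha$ is the zero class in the quotient $\Omega^{k-1}(M)$.

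For the right-hand side, the definitions of $d$ and of the metric give $\inp{\alpha}{d\beta} = \int f\, \det N\, d\mu$, where $N$ is the $k \times k$ matrix with $N_{i1} = \Gamma(h_i, f')$ and $N_{ij} = \Gamma(h_i, h'_{j-1})$ for $j \geq 2$. Expanding along the first column gives $\det N = \sum_{i=1}^k (-1)^{i+1} \Gamma(h_i, f')\, D_i$, where $D_i = \det[\Gamma(h_l, h'_m)]_{l \neq i,\, m=1,\dots,k-1}$. I would then apply the diffusion property twice together with integration by parts against $\mu$ (using $\int \Gamma(\phi, h_i)\, d\mu = \int \phi L(h_i)\, d\mu$, which is equivalent to the fundamental identity of the Markov triple) to obtain
\[
\int f D_i\, \Gamma(h_i, f')\, d\mu = \int f' D_i\, \bigl(f L(h_i) - \Gamma(f, h_i)\bigr)\, d\mu \;-\; \int f f'\, \Gamma(h_i, D_i)\, d\mu.
\]
Summing with weights $(-1)^{i+1}$, the first family of integrands reassembles into exactly $\inp{\sum_i (-1)^i (\Gamma(f, h_i) - fL(h_i))\, dh_1 \wedge \cdots \wedge \hat{dh_i} \wedge \cdots \wedge dh_k}{\beta}$, which is the first sum in $\partial_k\alpha$ paired with $\beta$.

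The main obstacle is matching the leftover terms $\sum_i (-1)^i \int f f'\,\Gamma(h_i, D_i)\, d\mu$ to the Lie bracket portion of $\partial_k\alpha$. Applying the chain-rule form of the diffusion property to the determinant $D_i$ gives
\[
\Gamma(h_i, D_i) = \sum_{l \neq i,\, m} C^{(i)}_{lm}\, \Gamma(h_i, \Gamma(h_l, h'_m)),
\]
where $C^{(i)}_{lm}$ is the $(l,m)$-cofactor of the matrix $[\Gamma(h_l, h'_m)]_{l\neq i}$. Re-indexing the triple sum over $i$, $l\neq i$, and $m$ by unordered pairs $\{i,j\}$, each such pair contributes twice with opposite roles, and the two terms combine into the antisymmetric difference $\Gamma(h_i, \Gamma(h_j, h'_m)) - \Gamma(h_j, \Gamma(h_i, h'_m)) = [\nabla h_i, \nabla h_j](h'_m) = dh'_m\bigl([\nabla h_i, \nabla h_j]\bigr)$, using the weak Lie bracket identity (equation \ref{weak lie bracket eq}). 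What remains is careful bookkeeping of signs: the $(-1)^i$ from the outer sum, the cofactor sign from $D_i$, and the orientation sign from the antisymmetrisation combine to yield precisely the $(-1)^{i+j}$ weight in $\partial_k\alpha$, while the residual $(k-2)\times(k-2)$ minors match the determinant expansion of $\inp{f[\nabla h_i, \nabla h_j]^\flat \wedge dh_1 \wedge \cdots \hat{dh_i}\cdots\hat{dh_j}\cdots\wedge dh_k}{\beta}$. I would attack this combinatorial step either via a Laplace expansion along two columns of a suitably enlarged auxiliary determinant, or by induction on $k$, since the base case $k=2$ (where only one Lie bracket term appears) is a short direct calculation and illustrates the pattern of sign cancellations needed in general.
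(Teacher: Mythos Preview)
Your approach is correct in outline and differs from the paper's proof mainly in how you dispatch the leftover terms $\sum_i (-1)^i \int ff'\,\Gamma(h_i, D_i)\,d\mu$. You propose a direct combinatorial attack: expand $\Gamma(h_i,D_i)$ via the diffusion property into cofactor-weighted iterated $\Gamma$'s, antisymmetrise over pairs $\{i,j\}$ to produce $[\nabla h_i,\nabla h_j](h'_m)$, and then track the signs and residual minors to match the Lie-bracket part of $\partial_k\alpha$. This works, but it is exactly the bookkeeping you flag as the ``main obstacle''.

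The paper sidesteps that combinatorics entirely by writing the test form as $a\beta$ with $\beta = dh_1'\wedge\cdots\wedge dh_{k-1}'$ a pure exact form, and then observing that the leftover expression
\[
\sum_{i}(-1)^i\,\Gamma\big(h_i,\,g(\hat{dh_i},\beta)\big)\;-\;\sum_{i<j}(-1)^{i+j}\,g\big([\nabla h_i,\nabla h_j]^\flat\wedge\hat{dh_i}\wedge\hat{dh_j},\,\beta\big)
\]
is, term for term, the invariant formula for $d\beta(\nabla h_1,\ldots,\nabla h_k)$ from Proposition~\ref{ext derivative invariant formula}. Since $\beta$ is closed, this vanishes, and the identity $\inp{\partial_k\alpha}{a\beta}=\inp{\alpha}{d(a\beta)}$ drops out with no further sign-chasing. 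In effect, the paper has already paid for your combinatorial step when proving the invariant formula for $d$, and cashes it in here. Your route is self-contained but duplicates that effort; the paper's is shorter precisely because it reuses a result you also have available.
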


We took $\Omega^{k,1}(M)$ as the domain for $\partial_k$ because we needed $[\nabla h_i, \nabla h_j]^\flat$ to be a well defined form.
We can, if needed, extend this domain by the adjoint relationship with $d_{k-1}$.

\begin{definition}
The \textbf{domain of the codifferential} $\Omega^k_\partial(M)$ is defined as the set of all $\alpha \in \Omega^k(M)$ for which there exists some $\eta \in \Omega^{k-1}(M)$ satisfying
$$\inp{\eta}{\beta} = \inp{\alpha}{d_{k-1} \beta}$$
for all $\beta \in \Omega^{k-1}(M)$.
This $\eta$ is then uniquely determined and is called $\partial_k \alpha$.
We equip $\Omega^k_\partial(M)$ with the Sobolev inner product
$$\inp{\alpha}{\beta}_\partial = \inp{\alpha}{\beta} + \inp{\partial\alpha}{\partial\beta}$$
and let $L^2\Omega^k_\partial(M)$ be the completion of  $\Omega^k_\partial(M)$ with respect to $\|\cdot\|_\partial$.
\end{definition}

We can, as usual, also define a \q{weak} codifferential $\partial_k \alpha: \mathfrak{X}^{k-1}(M) \rightarrow \A$ for all $\alpha \in \Omega^k(M)$, using the weak Lie bracket
$$
[\nabla h_i, \nabla h_j]^\flat(X) = \Gamma(h_i, X(h_j)) - \Gamma(h_j, X(h_i)).
$$

\subsection{Differential operators, cohomology and the Hodge Laplacian}
\label{theory_cohomology_section}

We can use the standard language of differential operators from commutative algebra to describe operators in diffusion geometry. 

\begin{definition}
A linear operator $X:\A \rightarrow \A$ is
\begin{itemize}
    \item a \textbf{first-order differential operator} if its carré du champ
    $$
    \Gamma_X(f,h) = \frac{1}{2}\big(X(fh) - fX(h) - hX(f)\big)
    $$
    vanishes (i.e. it is a derivation), and
    \item a \textbf{$\mathbf{n^{th}}$-order differential operator} if its carré du champ is a differential operator of order  $(n-1)$ in both $f$ and $h$.
\end{itemize}
\end{definition}

Vector fields are first-order differential operators.
Notice that the Markov generator $L$ is a second-order operator precisely when the carré du champ $\Gamma$ satisfies the diffusion property (Definition \ref{diffusion property}). The Hessian $f \mapsto H(f)(X,Y)$ is also a second-order operator because we can show
$$
\Gamma_H(f,h)(X,Y) = \frac{1}{2}\big( X(f)Y(h) + X(h)Y(f) \big),
$$
for all $X$ and $Y$, which satisfies the Leibniz rule in both $f$ and $h$.
We can also define differential algebra for the Markov triple $M$.

\begin{definition}
We call $\Omega^\bullet(M) = \bigoplus_k \Omega^k(M)$ the \textbf{algebra of differential forms}.
The facts that $d^2 = 0$ and $d$ satisfies the Leibniz rule (Proposition \ref{leibniz for ext derivative}) mean that $(\Omega^\bullet(M), \wedge, d)$ is a \textit{commutative differential graded algebra}.
We can interpret $\Omega^\bullet(M)$ as a \textit{cochain complex}
$$
\begin{tikzcd}[column sep=small]
\Omega^0(M) \arrow[r, "d"]
& \Omega^1(M) \arrow[r, "d"]
& \ddd \arrow[r, "d"]
& \Omega^{k-1}(M) \arrow[r, "d"]
& \Omega^k(M) \arrow[r, "d"]
& \Omega^{k+1}(M) \arrow[r, "d"]
& \ddd
\end{tikzcd}
$$
and define the \textbf{de Rham cohomology groups}
$$
H^k(M) := \frac{\ker(d_k)}{\text{im} (d_{k-1})}.
$$
The wedge product $\wedge$ descends to a well defined \textbf{cup product} on $H^k(M)$ by the Leibniz rule, which we denote $\cup$, and makes $(H^\bullet(M), \cup)$ into a \textit{commutative graded algebra}.
The algebra structure makes $(H^\bullet(M), \cup)$ a strictly stronger piece of data than $H^\bullet(M)$ alone: see Figure \ref{fig:cohom product} for an example of two spaces that share the same homology but have different cohomology, so can be distinguished by the cup product.
\end{definition}

A fundamental link between the geometry and topology of a manifold is established by the Hodge theorem, which uses the \textit{Hodge Laplacian}.
Before defining this, we need to introduce one more space of forms.

\begin{definition}
We define the \textbf{space of coefficient-differentiable $k$-forms}
$$
\Omega^{k,1}_+(M) = \left\{\sum f_i dh^i_1 \wedge \dots \wedge dh^i_k \in \Omega^k(M) : f_i, h^i_j \in W^{2,\infty}(M) \text{ for all } i,j\right\}.
$$
\end{definition}

Recall that forms in $\Omega^{k,1}(M)$ do not also require the coefficient functions $f_i$ to be in $W^{2,\infty}(M)$.
This additional assumption in $\Omega^{k,1}_+(M)$ means we can define the following.

\begin{definition}
The \textbf{Hodge Laplacian} is the linear map $\Delta_k : \Omega^{k,1}_+(M) \rightarrow \Omega^k(M)$ defined by
$$
\Delta_k := d_{k-1}\partial_k + \partial_{k+1}d_k.
$$
\end{definition}

This definition makes sense for $\alpha \in \Omega^{k,1}_+(M)$, because $\Omega^{k,1}_+(M) \subseteq \Omega^{k,1}(M)$, so $\partial\alpha$ is well defined, and $d\alpha \in \Omega^{k+1,1}(M)$, so $\partial d\alpha$ is also well defined.
As with the codifferential, the domain of $\Delta_k$ may actually be slightly larger, and so we make the following definition.

\begin{definition}
The \textbf{domain of the Hodge Laplacian} $\Omega^k_\Delta(M)$ is defined as the set of all $\alpha \in \Omega^k_\partial(M)$ for which there exists some $\eta \in \Omega^k(M)$ satisfying
$$\inp{\eta}{\beta} = \inp{d\alpha}{d\beta} + \inp{d\partial\alpha}{\beta}$$
for all $\beta \in \Omega^k(M)$.
This $\eta$ is then uniquely determined and is called $\Delta_k \alpha$.
We equip $\Omega^k_\Delta(M)$ with the Sobolev inner product
$$\inp{\alpha}{\beta}_\Delta = \inp{\alpha}{\beta} + \inp{d\alpha}{d\beta} + \inp{\partial\alpha}{\partial\beta}$$
and let $L^2\Omega^k_\Delta(M)$ be the $\|\cdot\|_\Delta$-completion of $\Omega^k_\Delta(M)$.
\end{definition}

For clarity, the hierarchy of spaces we have now defined is
$$
\Omega^{k,1}_+(M) \subseteq \Omega^k_\Delta(M) \subseteq \Omega^k_\partial(M) \subseteq \Omega^k(M).
$$

The Hodge theorem states that, if $\M$ is a manifold (or weighted manifold), then $H^k(\M) \cong \ker \Delta_k$.
It may be the case that $H^k(M) \cong \ker \Delta_k$ on more general Markov triples than manifolds, but we do not address that question here (see \cite{gigli2018nonsmooth} for a discussion of Hodge theory on $L^2$ spaces of forms on $RCD(k,\infty)$ spaces).
Nonetheless, this result will motivate our practical approach to measuring the \q{shape} of a dataset with the spectrum of the Hodge Laplacian (see Figure \ref{fig:f4}).

\begin{figure}[!ht]
  \centering
  \captionsetup{width=0.66\linewidth}
  \subfloat[][0.09]{\includegraphics[width=.3\textwidth]{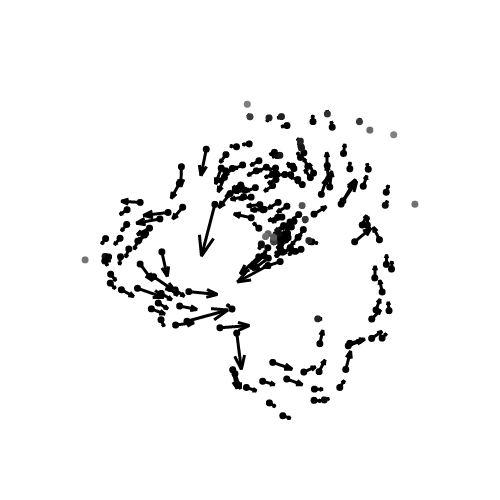}}
  \subfloat[][0.44]{\includegraphics[width=.3\textwidth]{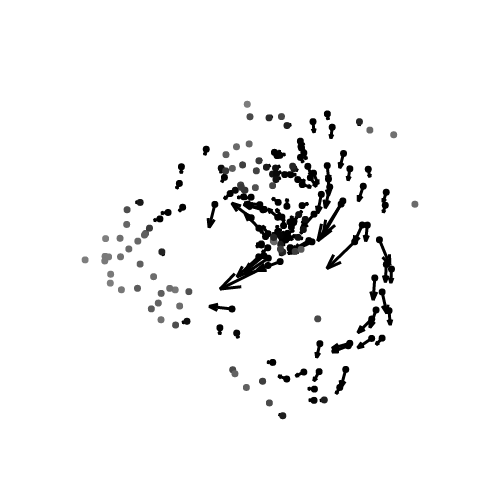}}
  \subfloat[][0.60]{\includegraphics[width=.3\textwidth]{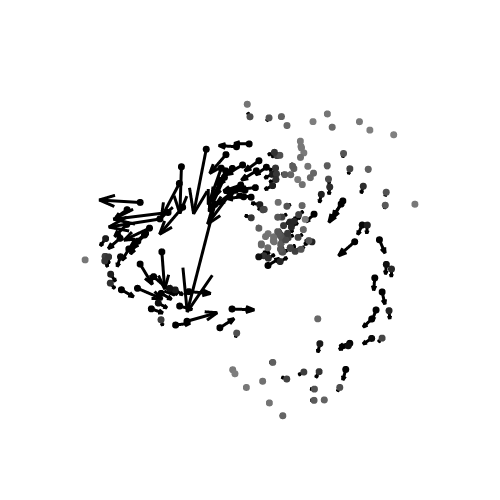}}\\
  \subfloat[][0.67]{\includegraphics[width=.3\textwidth]{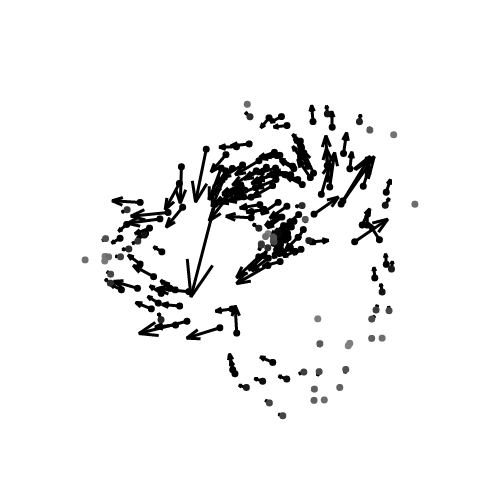}}
  \subfloat[][1.63]{\includegraphics[width=.3\textwidth]{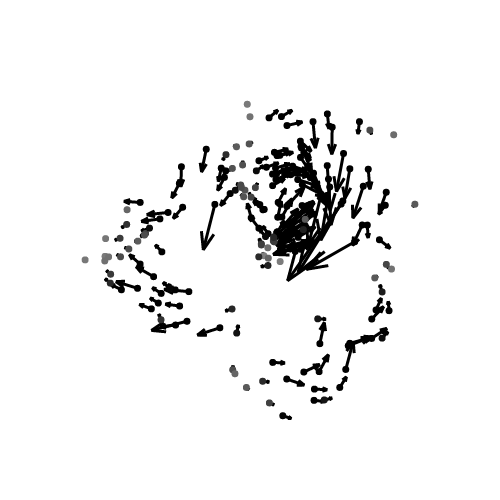}}
  \subfloat[][2.33]{\includegraphics[width=.3\textwidth]{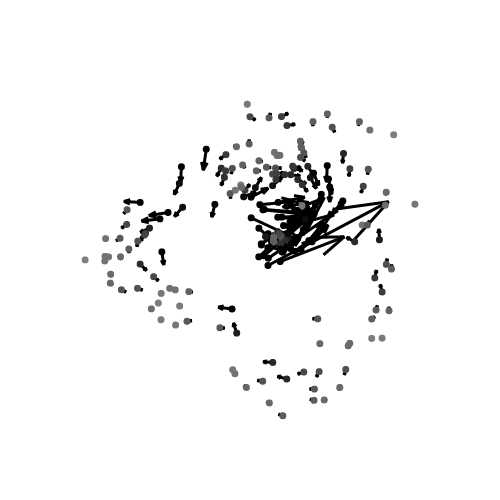}}\\
  \caption{\textbf{The spectrum of the 1-Hodge Laplacian.}
  Eigenforms are labelled with their eigenvalues (a - f). 
  The first eigenvalue is very close to zero (0.09) and, up to the influence of noise and artefacts of the discretisation, represents the large \q{hole} in the data as an element of its $H^1$ cohomology. 
  The eigenform's dual vector field (a) represents a rotating flow around that hole. 
  The second eigenvalue is bigger (0.44), and measures a much less prominent hole at the top as indicated by its corresponding eigenform (b).}
  \label{fig:f4}
\end{figure}

\subsection{Third order calculus: second covariant derivative and curvature}
\label{subsec third order}

We are also interested in the curvature of functions, tensors, and spaces, and this analysis requires the \textit{iterated} covariant derivative. 
To simplify the exposition, and to work in further generality, we consider the covariant derivative $\nabla_X \xi$ where $\xi \in F$ and $F$ may stand for $\A$, $\mathfrak{X}^1(M)$, or $\Omega^{k,1}(M)$ (or other tensors).
We can then regard $\nabla_\bullet \xi : \mathfrak{X}(M) \rightarrow F$ as an \textit{$F$-valued 1-form}, i.e. an element of $\Omega^1(M) \otimes F$.
To avoid confusion with the gradient $\nabla: \A \rightarrow \mathfrak{X}(M)$, we will always write $\nabla_\bullet$ to mean the covariant derivative in this sense. 
So
$$
\nabla_\bullet: F \rightarrow \Omega^1(M) \otimes F.
$$
We have already introduced two examples of this $\nabla_\bullet$:
\begin{enumerate}
    \item $F = \mathfrak{X}^1(M)$, where $\nabla_\bullet$ is the (1,1) tensor of Definition \ref{(1,1) nabla}, and
    \item $F = \A$, where $\nabla_X f = X(f) = df(X)$, so $\nabla_\bullet f = df$ and $\nabla_\bullet = d:\A \rightarrow \Omega^1(M)$.
\end{enumerate}

Just like the extension of $\nabla_\bullet$ from vector fields to 1-forms, we will extend $\nabla_\bullet$ to $\Omega^1(M) \otimes F$ by preserving the Leibniz rule, so we expect
$$
\nabla_\bullet(\alpha\otimes\xi) = \nabla_\bullet(\alpha)\otimes\xi + \alpha\otimes\nabla_\bullet(\xi).
$$
For example, for this to hold would mean that the \textit{second covariant derivative} $\nabla^2_{X,Y}$ must satisfy
$$
\nabla_X(\nabla_Y\xi) = \nabla_{\nabla_X Y}\xi + \nabla^2_{X,Y} \xi,
$$
although this requires vector fields with even more regularity than $\mathfrak{X}^1(M)$.

\begin{definition}
We define the \textbf{space of twice-differentiable vector fields}
$$
\mathfrak{X}^2(M) := \{X \in \mathfrak{X}^1(M) : \nabla X \in \Omega^1(M) \otimes \Omega^{1,1}(M)\},
$$
or, equivalently, $X \in \mathfrak{X}^2(M)$ if $\nabla_Y X \in \mathfrak{X}^1(M)$ for all $Y \in \mathfrak{X}(M)$.
\end{definition}

These vector fields are \q{twice-differentiable} because we can take their covariant derivative twice.
If $Z \in \mathfrak{X}^2(M)$ then $\nabla_X Z \in \mathfrak{X}^1(M)$ for all $X \in \mathfrak{X}(M)$, so $\nabla_Y \nabla_X Z \in \mathfrak{X}(M)$ is well defined for all $Y \in \mathfrak{X}(M)$.
We also define the corresponding spaces of twice-differentiable $k$-forms.

\begin{definition}
We define the \textbf{space of twice-differentiable $k$-forms} as
$$
\Omega^{k,2}(M) := \{\alpha \in \Omega^{k,1}(M) : \nabla_Y \alpha \in \Omega^{k,1}(M) \text{ for all } Y \in \mathfrak{X}(M)\}.
$$
In particular, $\Omega^{1,2}(M) = \mathfrak{X}^2(M)^\flat$.
\end{definition}

As before, we can take the covariant derivative twice: if $\alpha \in \Omega^{k,2}(M)$, then $\nabla_X \alpha \in \Omega^{k,1}(M)$ and $\nabla_Y \nabla_X \alpha \in \Omega^k(M)$.
We can now define the second covariant derivative on $\A$, $\mathfrak{X}^2(M)$, and $\Omega^{k,2}(M)$.

\begin{definition}
If $X \in \mathfrak{X}(M)$, $Y \in \mathfrak{X}^1(M)$, and $F = \A$, $\mathfrak{X}^2(M)$, or $\Omega^{k,2}(M)$, the \textbf{second covariant derivative} on $F$ is given by $\nabla^2_{X,Y} := \nabla_X\nabla_Y - \nabla_{\nabla_X Y}$. So if
\begin{enumerate}
    \item $F = \A$, then $\nabla^2_{X,Y} : \A \rightarrow \A$,
    \item $F = \mathfrak{X}^2(M)$, then $\nabla^2_{X,Y} : \mathfrak{X}^2(M) \rightarrow \mathfrak{X}(M)$, and
    \item $F = \Omega^{k,2}(M)$, then $\nabla^2_{X,Y} : \Omega^{k,2}(M) \rightarrow \Omega^k(M)$.
\end{enumerate}
\end{definition}

Note that we need $Y \in \mathfrak{X}^1(M)$ to ensure that $\nabla_X Y \in \mathfrak{X}(M)$.
Also, we can see that the regularity issues that led us to define $\mathfrak{X}^1(M)$ and $\mathfrak{X}^2(M)$ do not arise for functions in $\A$, because
$$
\nabla^2_{X,Y}(f) = XY(f) - \nabla_X Y(f)
$$
is defined for all $f \in \A$.
In the proof of Proposition \ref{lie bracket commutator} we show that that
$$
XY(f) = \nabla_X Y (f) + H(f)(X, Y),
$$
so $\nabla^2_{X,Y}(f) = H(f)(X,Y)$.
Although we only technically proved Proposition \ref{lie bracket commutator} in the case that $X,Y \in \mathfrak{X}^1(M)$, the same argument applies to the \q{weak} formulations of operators, with which we are working here.
So in fact $\nabla^2_{X,Y}(f) = H(f)(X,Y)$ for all $X,Y \in \mathfrak{X}(M)$.

Finally, we can use the covariant derivative to introduce the Riemann curvature operator.

\begin{definition}
The \textbf{Riemann curvature operator} is the map $R: \mathfrak{X}^1(M) \times \mathfrak{X}^1(M) \times \mathfrak{X}^2(M) \rightarrow \mathfrak{X}(M)$ given by
$$
R(X,Y)Z := \nabla_X \nabla_Y Z - \nabla_Y \nabla_X Z -\nabla_{[X,Y]} Z.
$$
\end{definition}

We needed $X,Y \in \mathfrak{X}^1(M)$ to ensure $[X,Y] \in \mathfrak{X}(M)$, and $Z \in \mathfrak{X}^2(M)$ to ensure $\nabla_X \nabla_Y Z \in \mathfrak{X}(M)$.
The torsion-free property of $\nabla_X$ means that $R(X,Y) = \nabla^2_{X,Y} - \nabla^2_{Y,X} : \mathfrak{X}^2(M) \rightarrow \mathfrak{X}(M)$.

\section{Computing Diffusion Geometry from Data}\label{section_estimation}

We can separate the question of computing diffusion geometry into two parts: how to compute the infinitesimal generator (i.e. the Laplacian), and how to compute the rest of the geometry given that operator.

\subsection{Computing the infinitesimal generator}

Estimating the Laplacian of a manifold from a point cloud of samples from it is a very well-developed topic.
It has been a central question in \textit{manifold learning} because the eigenfunctions of the Laplacian are good candidates for coordinate functions for dimensionality reduction.
The approach we favour here, known as \textit{diffusion maps} \cite{COIFMAN20065} (Algorithm \ref{alg:DM}), exploits the relationship between the Laplacian and the heat kernel laid out in Section \ref{section_markov} and particularly equation \ref{inf generator}.
They show that, if $\M$ is a manifold isometrically embedded in $\R^d$, and $p_t(x,y)$ is the heat kernel \textit{in the ambient space} (equation \ref{heat kernel eq}), then $p_t$ approximates the heat kernel in $\M$. So if 
$$
P_t f(x) = \int_\M p_t(x,y) f(y) dy,
$$
then
$$
\lim_{t\rightarrow 0} \frac{f - P_t f}{t} = \Delta f
$$
pointwise (where $\Delta$ is the Laplacian on $\M$).
Given a finite sample $(x_i)_{i\leq n}$ from $\M$, then by the law of large numbers we can approximate
$$
P_t f(x_i) \approx \frac{1}{n}\sum_{j=1}^n p_t(x_i,x_j) f(x_j),
$$
like a Monte Carlo integration.
So, if the function $f$ is represented by a vector of length $n$ (encoding its value on the $n$ points), then multiplication by the heat kernel matrix $(p_t(x_i,x_j))_{ij}$ represents the action of the heat diffusion operator.
Diagonalising this matrix then yields estimators for the eigenfunctions of the Laplacian.

When $\M$ has a density $q$ (i.e. it is a weighted manifold), the authors of \cite{COIFMAN20065} show that the heat diffusion operator
$$
P_t f(x) = \int_\M p_t(x,y) f(y) q(y) dy,
$$
is generated by
\begin{equation}\label{eq generator weighted}
\lim_{t\rightarrow 0} \frac{f - P_t f}{t} = \frac{\Delta(fq)}{q} - \frac{\Delta(q)}{q}f
\end{equation}
which is equivalent (by conjugation) to the weighted Laplacian in Example \ref{eg weighted manifold}.
This is relevant when the data are sampled non-uniformly from $\M$, with probability mass function $q$, because
$$
\E \Big(\frac{1}{n}\sum_j p_t(x_i,x_j) f(x_j) \Big) = \int_\M p_t(x_i,y) f(y) q(y) dy,
$$
so the heat kernel matrix will instead yield eigenfunctions of this $q$-weighted operator (\ref{eq generator weighted}).
They also introduce a simple 1-parameter renormalisation of the heat kernel, which controls the role of this density $q$.
We may use this in practice, but refer to \cite{COIFMAN20065} for the details.
See Algorithm \ref{alg:DM} for the basic implementation of diffusion maps.

\begin{algorithm}
\caption{Diffusion maps estimation of the infinitessimal generator}\label{alg:DM}
\KwData{$n\times n$ distance matrix $D$, bandwidth $\epsilon$}
\KwResult{Eigenvalues $\lambda_i$ and eigenfunctions $\p{i}$ of the infinitessimal generator, empirical density $D$}
$P_{ij} \gets \exp(-D_{ij}^2/\epsilon$)\;
$D_i \gets \sum_j P_{ij}$\;
$D \gets \text{diagonal}(D)$\;
$\Lambda, \Phi \gets \text{solutions of the generalised eigenproblem } P\Phi = D \Phi \Lambda$\;
$\Phi \gets (1 - \Phi) / \epsilon$
\end{algorithm}

In the following, we assume that our data are drawn from some smooth distribution on $
\R^d$, or on an embedded submanifold of $\R^d$, and so the diffusion maps algorithm will estimate the infinitesimal generator (\ref{eq generator weighted}) (where the Laplacian $\Delta$ in (\ref{eq generator weighted}) is either that of $\R^d$ or the submanifold). 
However, there are many other pertinent approaches, such as the combinatorial Laplacian on a graph or simplicial complex, or the generator of some correlation matrix between the data.
Working with the graph Laplacian would lead to an alternative construction of graph theory, which we will explore in future work.

\subsection{Computing diffusion geometry with the generator}

Now suppose we have an infinitesimal generator $L$, which acts on an $n$-dimensional function algebra $\A$.
We will usually suppose that $\A\cong\R^n$ represents functions on $n$ points or vertices, and so the basis vector $e_i$ represents a function being 1 on the $i\thupper$ vertex and 0 on the others.
A natural way to represent forms is suggested by the definition, which we recall (for 1-forms) to be
$$
\Omega^1(M) = \frac{\A \otimes \A}{D^1}.
$$
We should 
\begin{enumerate}
    \item choose a basis $\{f_i : i \in I\}$ for the function algebra $\A$, so $\{f_i \otimes f_j : i,j \in I\}$ gives a basis for $\A \otimes \A$ (we could use the standard basis $\{e_i\}$ or, as discussed below, the eigenfunctions of $L$),
    \item compute the inner product (Gram) matrix $G$ for $\{f_i \otimes f_j : i,j \in I\}$, and
    \item quotient $\A \otimes \A$ by its kernel (i.e. diagonalise $G$ and project onto the orthonormal basis of eigenvectors with positive eigenvalue).
\end{enumerate}
In other words, the eigenvectors of $G$ give an orthonormal basis for $\Omega^1(M)$.
The formulae given in Section \ref{section_theory} can then be used to construct all the other objects in the theory, given the operator $L$. 
For example, we can compute the carré du champ
$$
\Gamma(f_i,f_j) = \frac{1}{2} \big( f_i L f_j + f_j L f_i - L (f_if_j) \big)
$$
and metric
$$
g(f_i df_j, f_k df_l) = f_if_k \Gamma(f_j,f_l)
$$
in the $f_i \otimes f_j$ basis for $\A \otimes \A$, and then project into the orthonormal basis for $\Omega^1(M)$.
We take the analogous approach for higher-order forms.
All the operators defined above are then matrices and tensors and can be processed using highly optimised software for multilinear algebra.
We outline this approach for 1-forms in Algorithm \ref{alg:1forms}.

\subsubsection{Well-posed Galerkin scheme}

We want to show that this finite-dimensional space of forms computed from data is an estimate for the space of forms on the underlying probability space.
In other words, just like in the finite-element method, we need to justify that these discrete representations of objects in diffusion geometry will converge to the true objects in the large-data limit.

Suppose that, instead of projecting onto eigenvectors of the Gram matrix, we had simply applied Gram-Schmidt.
This may result in a different choice of orthonormal basis but is otherwise identical to the process above.
So to represent 1-forms we can
\begin{enumerate}
    \item choose a basis $\{f_i : i \in I\}$ for $\A$, so the span of $\{b_{ij} = f_i \otimes f_j : i,j \in I\}$ projects onto a dense subspace of $L^2(\Omega^1(M))$, and
    \item apply the Gram-Schmidt orthonormalisation to the $b_{ij}$, removing linearly dependent elements, to obtain an orthonormal basis $\{c_k : k \in \N\}$ for $L^2(\Omega^1(M))$.
\end{enumerate}
In other words, \textit{because the basis is orthonormal}, we have
$$
\lim_{K\rightarrow\infty} \Big\|\sum_{k = 1}^K \inp{\alpha}{c_k} c_k - \alpha \Big\| = 0
$$
for all $\alpha \in \Omega^1(M)$.
This means that the representations of forms and operators in the finite \textit{truncated basis} $\{c_k : k \leq K\}$ are guaranteed to converge to the correct objects as $K \rightarrow \infty$.
The same approach applies to all the $\Omega^k(M)$ and justifies our computational framework for $L^2$-convergent forms.

If we want to represent operators on forms with higher regularity than $\Omega^k(M)$, such as the Hodge Laplacian $\Delta$, we need to work in the appropriate Sobolev spaces, like $\Omega^k_\Delta(M)$.
In these cases, we can apply the same method as above but pick an orthonormal basis for the Sobolev inner product instead, which in $\Omega^k_\Delta(M)$ is
$$
\inp{\alpha}{\beta}_\Delta = \inp{\alpha}{\beta} + \inp{d\alpha}{d\beta} + \inp{\partial\alpha}{\partial\beta}.
$$
If we had only used the $L^2$ inner product, there is a risk that, as $K\rightarrow\infty$, the truncated representations converge to something with no well-defined Hodge Laplacian.
By using the Sobolov inner product, we guarantee that the truncated sequence converges in $\Omega^k_\Delta(M)$ and so $\Delta$ is guaranteed to exist.

\subsubsection{Solving the eigenproblem for the Hodge Laplacian}

We can now solve the eigenproblem for $\Delta$ in $\Omega^k_\Delta(M)$ using the weak formulation: rather than solve $\Delta\alpha = \lambda\alpha$ directly, we solve
\begin{equation}\label{galerkin eq}
\inp{d\alpha}{d\beta} + \inp{\partial\alpha}{\partial\beta} + \epsilon\inp{\alpha}{\beta}_\Delta = \lambda\inp{\alpha}{\beta}
\end{equation}
for all $\beta$, where $\epsilon$ is a small regularisation parameter.
The bilinear form on the left of this equation is then \textit{coercive}, because 
$$
\inp{d\alpha}{d\alpha} + \inp{\partial\alpha}{\partial\alpha} + \epsilon\inp{\alpha}{\alpha}_\Delta \geq \epsilon\|\alpha\|_\Delta^2
$$
and \textit{bounded}, because 
\begin{equation*}
\begin{split}
\inp{d\alpha}{d\alpha} + \inp{\partial\alpha}{\partial\alpha} + \epsilon\inp{\alpha}{\alpha}_\Delta
&= (1+\epsilon)\|\alpha\|_\Delta^2 - \|\alpha\|^2 \\
&\leq (1+\epsilon)\|\alpha\|_\Delta^2
\end{split}
\end{equation*}
and the form defines an inner product so Cauchy-Schwarz implies
\begin{equation*}
\begin{split}
\inp{d\alpha}{d\beta} + \inp{\partial\alpha}{\partial\beta} + \epsilon\inp{\alpha}{\beta}_\Delta
&\leq \big(\inp{d\alpha}{d\alpha} + \inp{\partial\alpha}{\partial\alpha} + \epsilon\inp{\alpha}{\alpha}_\Delta\big)^\frac{1}{2} \big(\inp{d\beta}{d\beta} + \inp{\partial\beta}{\partial\beta} + \epsilon\inp{\beta}{\beta}_\Delta\big)^\frac{1}{2} \\
&\leq (1+\epsilon)\|\alpha\|_\Delta \|\beta\|_\Delta.
\end{split}
\end{equation*}
These two properties mean that the Galerkin equation \ref{galerkin eq} is well-posed and has a unique solution in $\Omega^k_\Delta(M)$.
The finite truncated basis $\{c_k : k \leq K\}$ is orthonormal, so the solution we obtain for each $K$ will converge to the true solution as $K \rightarrow \infty$.
This formulation is fully symmetric, so is efficient to compute (see Section \ref{appendix hodge laplacian}).

We can solve equations involving other operators, like the Hessian or codifferential, by defining the corresponding Sobolev inner products on their domains, picking an orthonormal basis for that inner product, and solving the equations in the truncated basis.

\subsubsection{Choosing the basis}\label{subsection choosing basis}

The steps above give a general computational framework for diffusion geometry, although working with higher-order forms and tensors may become computationally expensive as the dimension of the function space, $n$, increases.
Usually (and in the \textit{diffusion maps} setting) $n$ is the number of data, so this computational complexity will become prohibitive for large data sets.
We can, instead, project the functions into a smaller space and reduce the cost.

If $L$ is an $n \times n$ matrix, we can diagonalise $L$ and use the span of its first $n_0$ eigenfunctions as a compressed representation of $\A$. 
This is equivalent to the band-limited Fourier space of functions when $L$ is the Laplacian on a manifold.
This space is a natural choice for compression, because, among all sets of $n_0$ functions in $\A$, the first $n_0$ eigenfunctions of $L$ have the lowest \textit{Dirichlet energy} as defined by the energy functional
$$
\mathcal{E}(f) := \int fL(f) d\mu = \int \Gamma(f,f) d\mu.
$$
Functions with low Dirichlet energy are smoother and less oscillatory, so we can capture the finer details of functions by increasing $n_0$.

We apply the same principle to representing higher-order forms.
For example, the 2-forms are spanned by the terms $\p{i}d\p{j}\wedge d\p{k}$, so if we use all $n$ functions $\p{i},\p{j},\p{k}$ then the 2-forms are (at most) $n^3$-dimensional.
Instead, we limit $i \leq n_1$ and $j,k \leq n_2$ for some $n_1,n_2 \leq n$.
These \textit{approximate} 2-forms are then (at most) $n_1n_2^2$-dimensional.
In general, the $k$-forms are $n_1n_2^k$-dimensional, which lets us explicitly trade off computational complexity against precision.
We test the consequences of this in Section \ref{section_complexity}.

Working with these eigenfunctions also leads to some simplification in the formulae. Let $\p{i}$ be the $i\thupper$ eigenfunction of $L$, with eigenvalue $\lambda_i$, and define the \textit{structure constants}
$
c_{ijk} := \int \p{i}\p{j}\p{k} d\mu,
$
so that $\p{i}\p{j} = \sum_k c_{ijk}\p{k}$.
Then, for example, we can simplify
$$
\Gamma(\p{i},\p{j}) = \frac{1}{2} \sum_k (\lambda_i + \lambda_j - \lambda_k) c_{ijk} \p{k}.
$$
Formulae for computing all the other objects in Section \ref{section_theory} follow straight from their definitions, and are deferred to Appendix B (Section \ref{appendix implementation}). 
The Python code is available at \url{https://github.com/Iolo-Jones/DiffusionGeometry}.
We outline this approach to computing 1-forms in Algorithm \ref{alg:1forms}, using formulae referenced from Appendix B (Section \ref{appendix implementation}).

\begin{algorithm}
\caption{Computing 1-forms in diffusion geometry}\label{alg:1forms}
\KwData{Eigenvalues $\lambda_i$ and eigenfunctions $\p{i}$ of the infinitessimal generator, empirical density $D$, parameters $n_0, n_1, n_2$}
\KwResult{Projection matrix $P$ onto an orthonormal basis for the 1-forms}
$c_{ijk} \gets \sum_{s=1}^{n_0} (\p{i})_s(\p{j})_s(\p{k})_s D_s$ 
\hfill $\vartriangleright$ structure constants (\ref{structure constants appendix}), so

\hfill $\p{i}\p{j} = \sum_{k=1}^{n_0} c_{ijk}\p{k}$

$\Gamma_{ijs} \gets \frac{1}{2} \sum_{k=1}^{n_0} (\lambda_i + \lambda_j - \lambda_k) c_{ijk} (\p{k})_s$ 
\hfill $\vartriangleright$ carré du champ (\ref{cdc appendix}), so

\hfill $\Gamma(\p{i},\p{j}) = \sum_{k=1}^{n_0} \Gamma_{ijk}\p{k}$

$G_{ijkl} \gets \sum_{s=1}^{n_0} c_{iks} \Gamma_{jls}$
\hfill $\vartriangleright$ inner products as a 4-tensor (\ref{gram matrix forms appendix})

$G \gets G_{(ij)(kl)}$
\hfill $\vartriangleright$ Gram matrix: inner products 

\hfill reshaped to a 2-tensor

$L, U \gets \text{solutions of the eigenproblem } GU = UL$ 

\qquad\qquad in increasing eigenvalue order

$\nu \gets \text{index of the smallest positive eigenvalue in }L$

$P \gets (U_{ij}/ \sqrt{L_i} : j \geq \nu)$
\hfill $\vartriangleright$ projection matrix onto an orthonormal \\
\hfill basis for the 1-forms (i.e. $P^T G P = I_\nu$)

\end{algorithm}

This approach to representing and computing forms as tensor products of eigenfunctions is based on Berry and Giannakis' \q{spectral exterior calculus} (SEC) on Riemannian manifolds \cite{berry2020spectral}. The authors used the fact that simplifications like the above exist to derive formulae for computing the Hodge Laplacian on 1-forms. In the special case of Riemannian manifolds, the computation of diffusion geometry can be seen as an extension of the SEC programme to other objects in Riemannian geometry, as well as higher-order forms.

These Laplacian eigenfunctions are, in the above sense, the most natural choice for representing objects in diffusion geometry but computing them has $\ord(n^3)$ complexity (from diagonalising the $n\times n$ matrix in Algorithm \ref{alg:DM}), which may still be prohibitively expensive in some cases.
We could choose other bases to allow faster approximate computation of diffusion geometry, but this will be explored in future work.

\section{Computational Geometry and Topology, Machine Learning, and Statistics}\label{section_ml}

Diffusion geometry provides a very general framework for explainable geometric machine learning and statistics, where we can interpret all the objects defined above as representing \textit{geometric features} of the data.
It also provides a broad framework for computational geometry and topology, where the data are assumed to lie on a manifold and we want to compute various properties.

In this section, we give some simple demonstrations of diffusion geometry as a tool for
\begin{enumerate}
    \item computational geometry, as a measure for identifying intersections and tangent vectors in manifold-like data,
    \item computational topology, for computing the cohomology of manifolds,
    \item unsupervised learning, as a biomarker for the infiltration of immune cells into a tumour, and
    \item supervised learning, as a feature vector for classifying different types of immune cells in tumours by their spatial distribution.
\end{enumerate}

\subsection{Computational geometry and topology}

The geometry of manifolds is a common source of inspiration for computational geometry, where the data set is often called the \q{data manifold}.
This is usually just a metaphor, but the explicit assumption that data are actually drawn from a manifold is called the \q{manifold hypothesis}.
In this setting, diffusion geometry agrees with the usual Riemannian geometry so can be used to estimate properties of that manifold from a finite sample.
When the data are drawn from a space that looks almost everywhere like a manifold, we can still compute the same objects and use these to \textit{test} the manifold hypothesis.

\begin{figure}[!ht]
  \centering
  \captionsetup{width=0.66\linewidth}
  \subfloat[][Two circles and a curve intersecting]{\includegraphics[width=.48\textwidth]{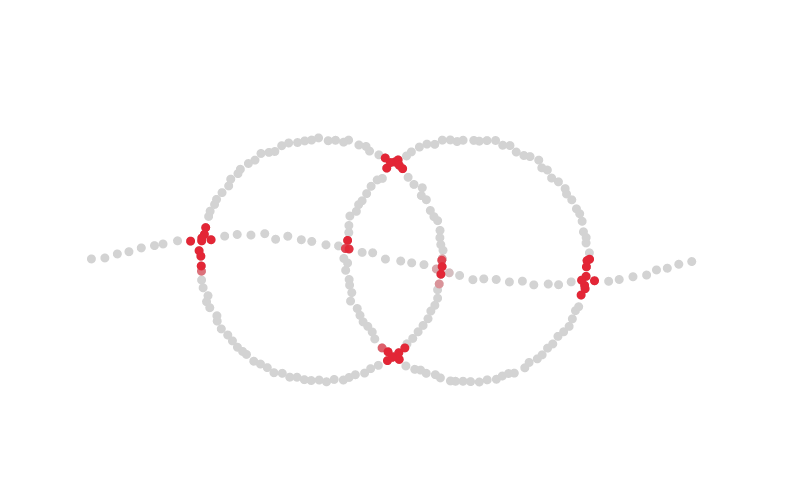}}\quad
  \subfloat[][Two circles and a curve intersecting (noisy)]{\includegraphics[width=.48\textwidth]{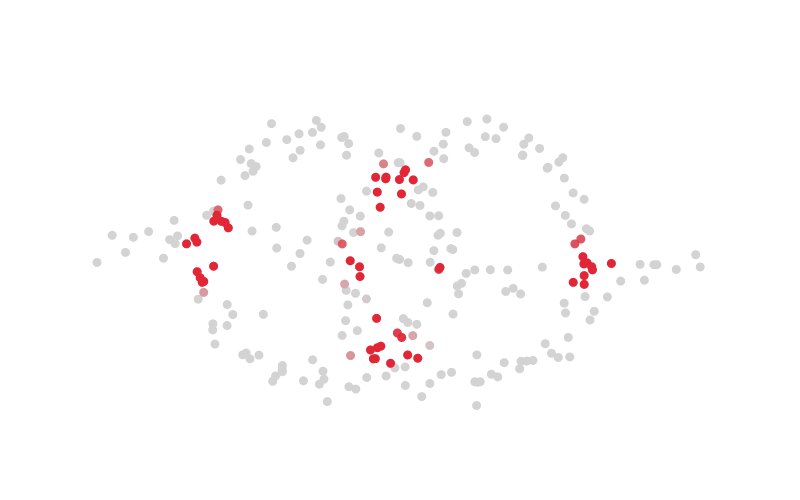}}\\
  \subfloat[][Two spheres and a circle intersecting]{\includegraphics[width=.48\textwidth]{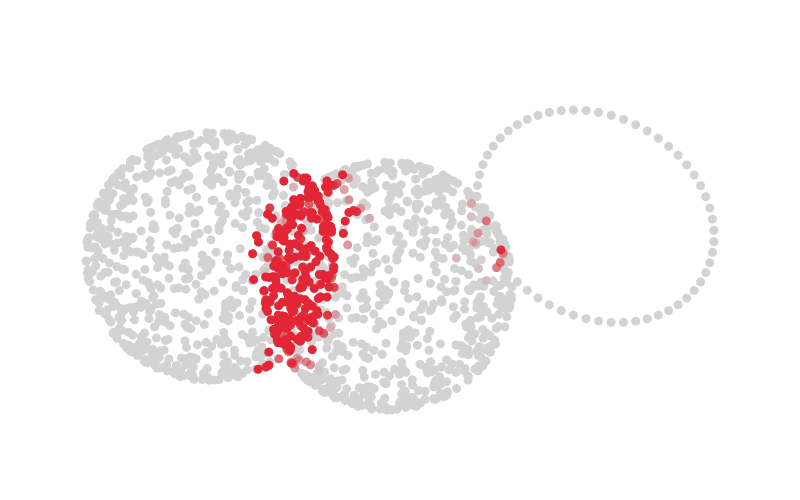}}\quad
  \subfloat[][Two spheres and a circle intersecting (noisy)]{\includegraphics[width=.48\textwidth]{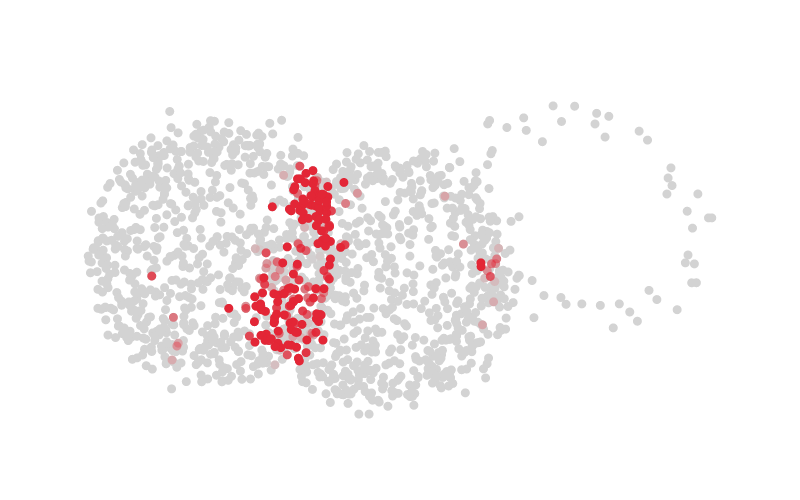}}\\
  \caption{\textbf{Testing the manifold hypothesis in 2D (a, b) and 3D (c, d).} 
  The first eigenvalue of the metric measures the degeneracy of the tangent space. 
  It is large (grey) where the data looks like a manifold, and dips lower (rose) at the singularities where the tangent space degenerates. 
  This measure is very robust to noise (b, d).}
  \label{fig:manifold_hypothesis_1}
\end{figure}

\subsubsection{Geometry of manifolds and cell complexes}

On a Riemannian manifold $\M$ of dimension $d$, each point has a \textit{tangent space} $T_x\M$, and the \textit{Riemannian metric} is an inner product $T_x\M \times T_x\M \rightarrow \R$.
The tangent space $T_x\M$ is $d$-dimensional, and so the metric has exactly $d$ positive eigenvalues, whose eigenvectors form an orthonormal basis for $T_x\M$.
Given a sample of data from $\M$ we can compute the metric at $x$ (i.e. a square symmetric matrix at each point $x$) and find these eigenvalues.
The $k\thupper$ largest eigenvalue is, therefore, a proxy statistic for \q{at least $k$-dimensionality at $x$}.

Suppose now that the data are drawn from a space that is not quite a manifold (such as the intersection of manifolds or a cell complex).
The tangent space is well defined everywhere that the space looks like a manifold, but degenerates at the points where the manifold hypothesis fails.
We can measure this local failure with the largest eigenvalue of the metric, which should be positive where the manifold hypothesis holds, but dip down towards zero at the degenerate points.
We evaluate this in Figure \ref{fig:manifold_hypothesis_1}.
The results are comparable to existing approaches for singularity detection (such as \cite{lim2023hades, stolz2020geometric, von2023topological}), but are significantly more robust to noise and outliers.

Where the manifold hypothesis holds, i.e. the largest eigenvalue of the metric is positive, we can also expect its eigenvectors to span the tangent space.
We test this in Figure \ref{fig:tangent_bundle_1}.
Unlike the commonly used local principal component analysis \cite{kambhatla1997dimension}, this approach is very robust to noise and outliers, and may yet perform better with further smoothing processes, which we will explore in future work.

\begin{figure}[!ht]
  \centering
  \captionsetup{width=0.66\linewidth}
  \subfloat[][]{\includegraphics[height=.35\textwidth]{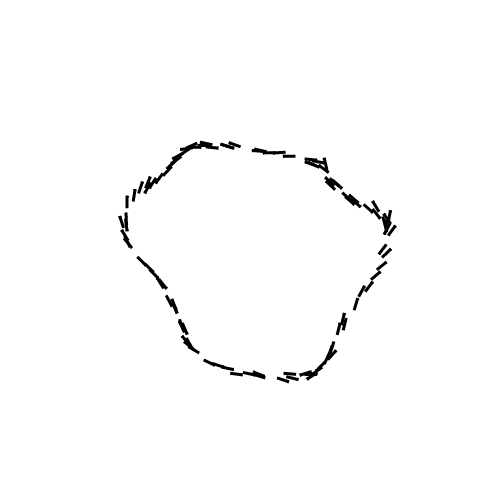}}
  \subfloat[][]{\includegraphics[height=.35\textwidth]{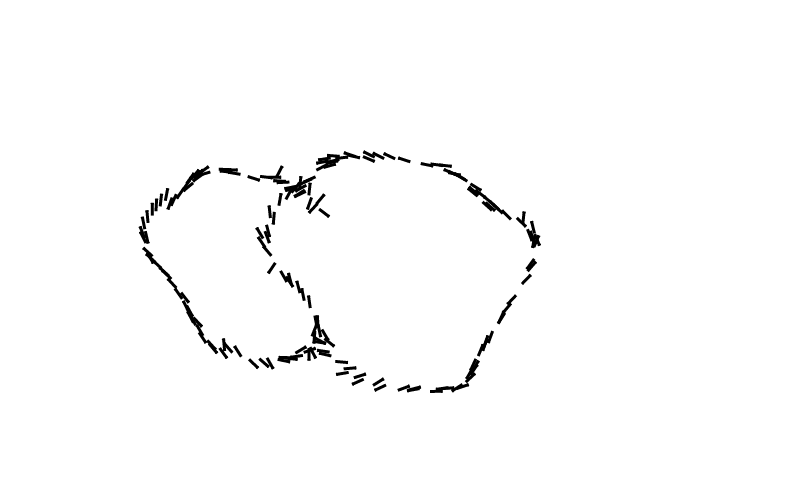}}\\
  \subfloat[][]{\includegraphics[height=.35\textwidth]{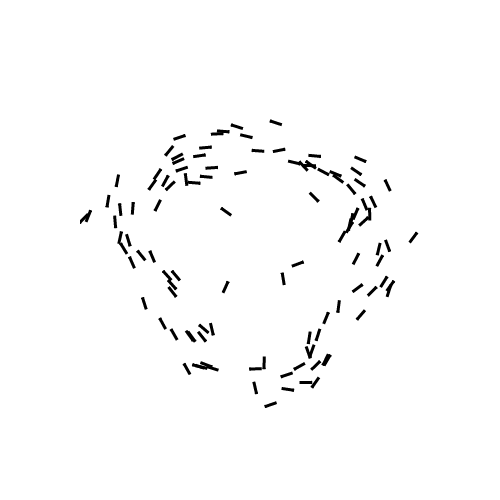}}
  \subfloat[][]{\includegraphics[height=.35\textwidth]{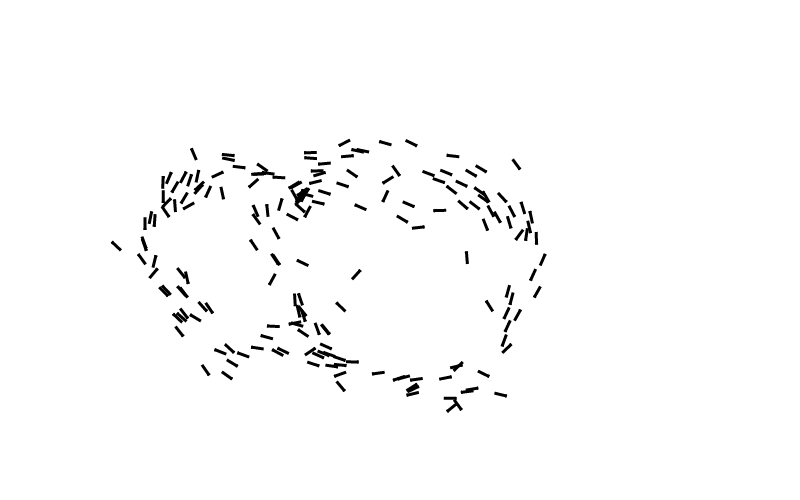}}
  \caption{\textbf{Fitting tangent lines to data.} 
  The first eigenvector of the metric represents the tangent space on a one-dimensional manifold (a). 
  Where the manifold hypothesis fails (b), the tangent vectors try to align at the intersections. 
  This process is very robust to noise (c, d).}
  \label{fig:tangent_bundle_1}
\end{figure}

\subsubsection{Topology}

Homology and cohomology are important topological properties of a space, which can be used to tell spaces apart when they are topologically distinct.
The dimensions of the homology (or cohomology) groups $H_i$ are called the \textit{Betti numbers} $\beta_i$ of the space, and measure the number of topological features in each dimension.
For example, $\beta_0$ is the number of connected components, $\beta_1$ is the number of holes or loops in the space, and $\beta_2$ is the number of enclosed voids.

\begin{figure}[!ht]
  \centering
  \captionsetup{width=0.66\linewidth}
  \subfloat[][$\alpha_1$, $\|\alpha_1\| = 1$]{\includegraphics[width=.48\textwidth]{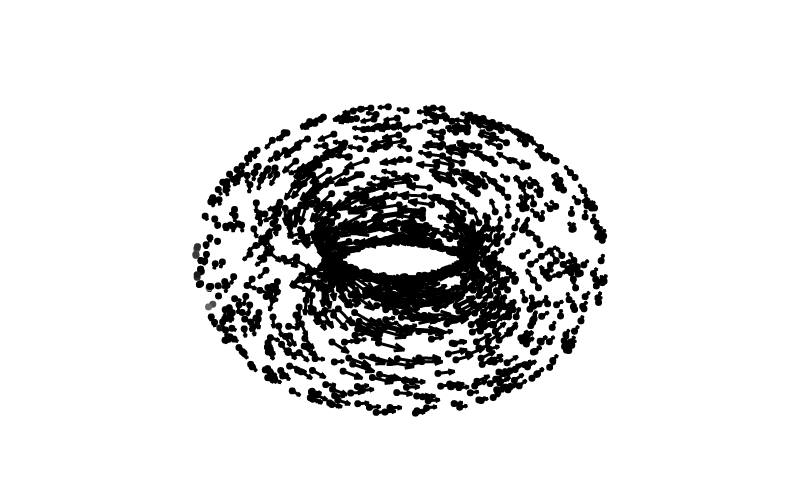}}\quad
  \subfloat[][$\beta_1$, $\|\beta_1\| = 1$]{\includegraphics[width=.48\textwidth]{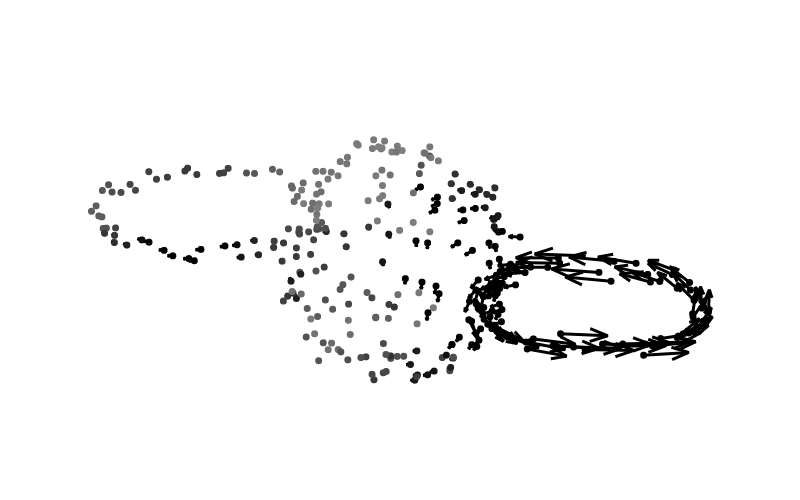}}\\
  \subfloat[][$\alpha_2$, $\|\alpha_2\| = 1$]{\includegraphics[width=.48\textwidth]{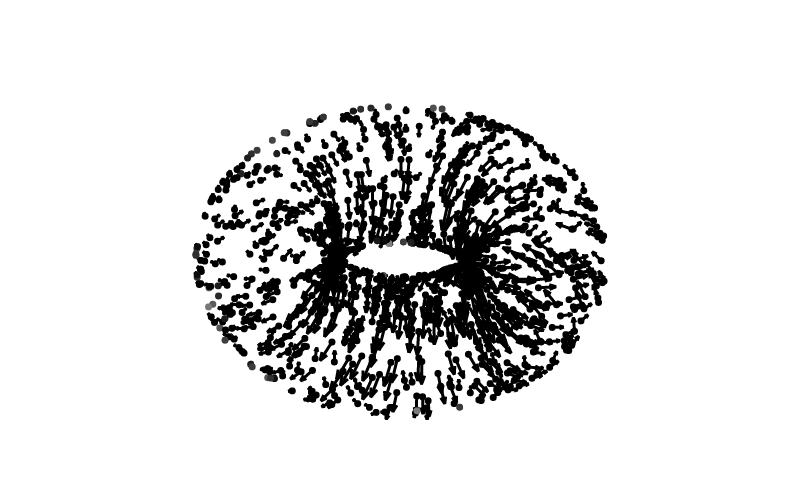}}\quad
  \subfloat[][$\beta_2$, $\|\beta_2\| = 1$]{\includegraphics[width=.48\textwidth]{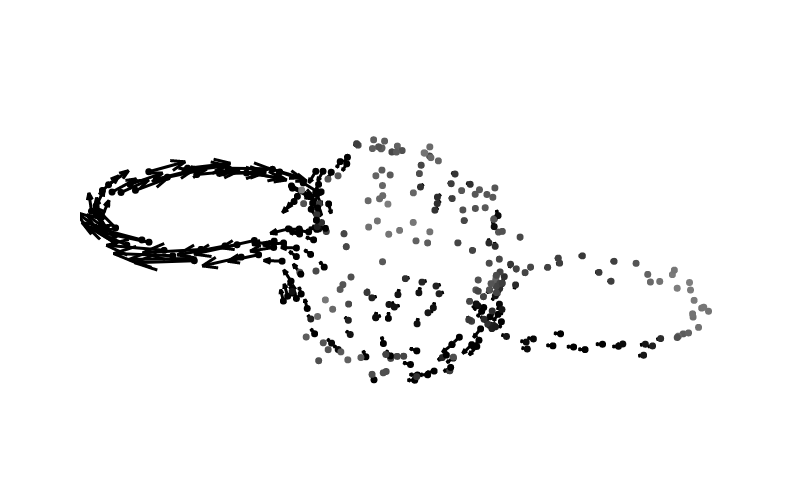}}\\  
  \subfloat[][$\alpha_1\wedge\alpha_2$, $\|\alpha_1\wedge\alpha_2\| = 0.112$]{\includegraphics[width=.48\textwidth]{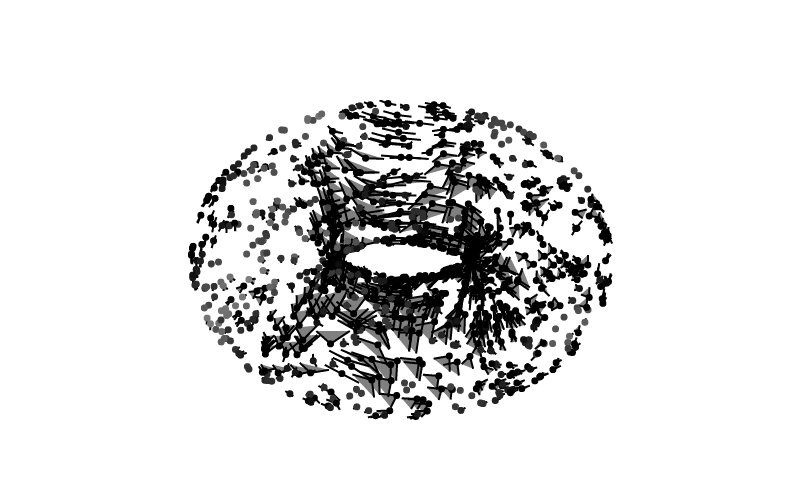}}\quad
  \subfloat[][$\beta_1\wedge\beta_2$, $\|\beta_1\wedge\beta_2\| = 0.003$]{\includegraphics[width=.48\textwidth]{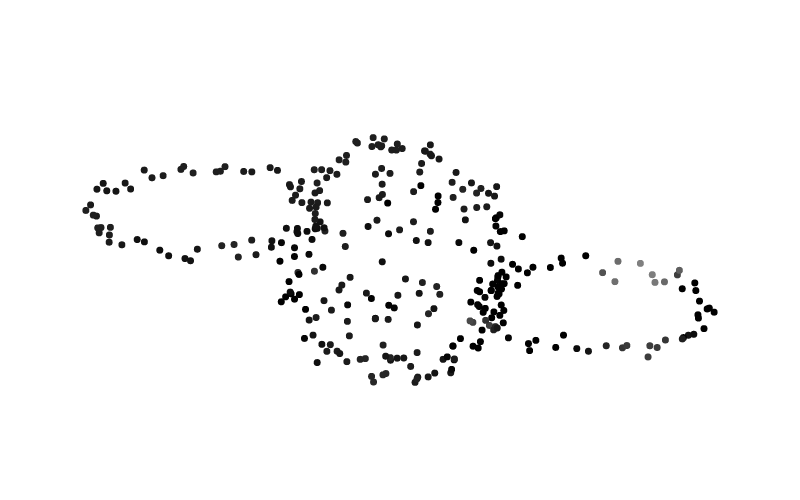}}\\
  \caption{\textbf{The wedge product on cohomology.}
  A torus (left column) and a sphere with two adjoined circles (right column) share the same Betti numbers (one component, two holes, one void) and so have the same \textit{homology}.
  However, \textit{cohomology} has a product that can tell the two spaces apart.
  Both spaces have two harmonic 1-forms (a, c) and (b, d), which measure the holes.
  However, their product is nonzero on the torus (e) but zero on the sphere-and-circles (f).}
  \label{fig:cohom product}
\end{figure}

As discussed in Section \ref{theory_cohomology_section}, we can compute the cohomology of a manifold by finding differential forms in the kernel of the Hodge Laplacian, which are known as \textit{harmonic} forms (by analogy with harmonic functions in the kernel of the Laplacian).
Unlike homology, cohomology also has a product structure (i.e. it is a ring), so we can multiply elements of cohomology together.
This makes cohomology a strictly finer invariant than homology: two spaces may share the same Betti numbers (so their homology and cohomology groups are the same size), but the \textit{products} of elements in the cohomology groups may differ.
They are indistinguishable by homology, but cohomology can tell them apart.

We can use diffusion geometry as a computational tool for cohomology by finding eigenforms of the Hodge Laplacian, whose zero (or approximately zero) eigenvalues correspond to harmonic forms.
We can then compute their wedge products (the product on cohomology) to obtain more topological information about the space.
One of the most popular tools for computational geometry and topology is \textit{persistent homology} \cite{robins1999towards, edelsbrunner2002topological}, which computes the homology (or cohomology) of data across a range of scales and represents it in a \textit{persistence diagram}.
However, even though persistent cohomology does have a product, it cannot be straightforwardly represented in the diagram\footnote{The product of points in the persistence diagram is, in general, a linear combination of other points.}.
By contrast, the wedge product of harmonic forms can be easily described in diffusion geometry.
We test this in Figure \ref{fig:cohom product} on two spaces that have the same Betti numbers, but distinct cohomology.

Recall that diffusion maps approximates the weighted Laplacian in Example \ref{eg weighted manifold}.
So, when the data are drawn from a smooth density on a submanifold of $\R^n$, we can expect diffusion geometry to estimate the Riemannian geometry of that submanifold with a metric weighted by the density.
In this case, the classical Hodge theorem applies, and so we expect to recover the cohomology of the submanifold.
In general, we can say that the cohomology groups we compute should approximate the \textit{cohomology of the support of the density}.

\subsection{Feature vectors for geometric machine learning and statistics}
\label{subsection feature vectors}

Machine learning and statistics usually require data in the form of \textit{feature vectors} that encode the necessary or interesting information.
Given complex geometric data, such as a point cloud, it is a fundamental problem to produce feature vectors that give a rich geometric description of the data with a large amount of information, are robust to noise and outliers, are fast to compute, and are explainable, so that the output of the final model can be interpreted and used to learn more about the data.

A fundamental observation of geometric deep learning \cite{bronstein2021geometric} is that geometric machine learning models must be invariant under various group actions, such as translations and rotations of the input data.
Diffusion maps, and hence diffusion geometry, works with the pairwise distances of the data (through the heat kernel), and so is naturally invariant under all isometries: translation, rotation, and reflection.
It can also be made scale-invariant through an appropriate choice of the heat kernel bandwidth.

However, group invariance alone is not enough.
Geometry is also preserved by adding noise and outliers, and, perhaps, changing the sampling density of the data.
Models should therefore be both group invariant \textit{and} statistically invariant.
The heat kernel is highly robust to noise, as we will explore, and the density-renormalisation of diffusion maps means that diffusion geometry models can be made \textit{density-invariant} as well.

We can use diffusion geometry to produce feature vectors by

\begin{enumerate}
    \item computing eigenfunctions $\phi_i$ and eigenforms $\alpha_i$ of the Laplacian and Hodge Laplacian (which encode rich geometric and topological information about the space: see Subsection \ref{theory_cohomology_section} and Figure \ref{fig:f4}),
    \item applying different combinations of the geometric operators defined in Section \ref{section_theory} to produce more functions and forms, e.g. $d\phi_2$, $H(\phi_1)(\nabla_{\alpha_1}\alpha_2, \alpha_3)$,
    \item reducing these functions and forms to single numbers by taking inner products, e.g. $\inp{\alpha_3}{d\phi_2}$, and
    \item stacking these numbers into a long vector.
\end{enumerate}

These vectors are explicitly computing (generalisations of) well-understood objects from Riemannian geometry, so give a rich geometric description and are explainable.
In the computational framework described in Section \ref{section_estimation} (and tested later in Section \ref{section_complexity}), they are also fast to compute and inherit very strong noise-robustness from the heat kernel used in the diffusion maps algorithm.
We can then use these vectors for unsupervised learning (dimensionality reduction or clustering) and supervised learning (as features for regression).

\subsubsection{Unsupervised learning: biomarkers for tumour dynamics}

We consider the problem of finding biomarkers for processes in the tumour microenvironment to demonstrate diffusion geometry's effectiveness for unsupervised representation learning.

\begin{figure}[!ht]
  \centering
  \captionsetup{width=0.66\linewidth}
  \includegraphics[width=\textwidth]{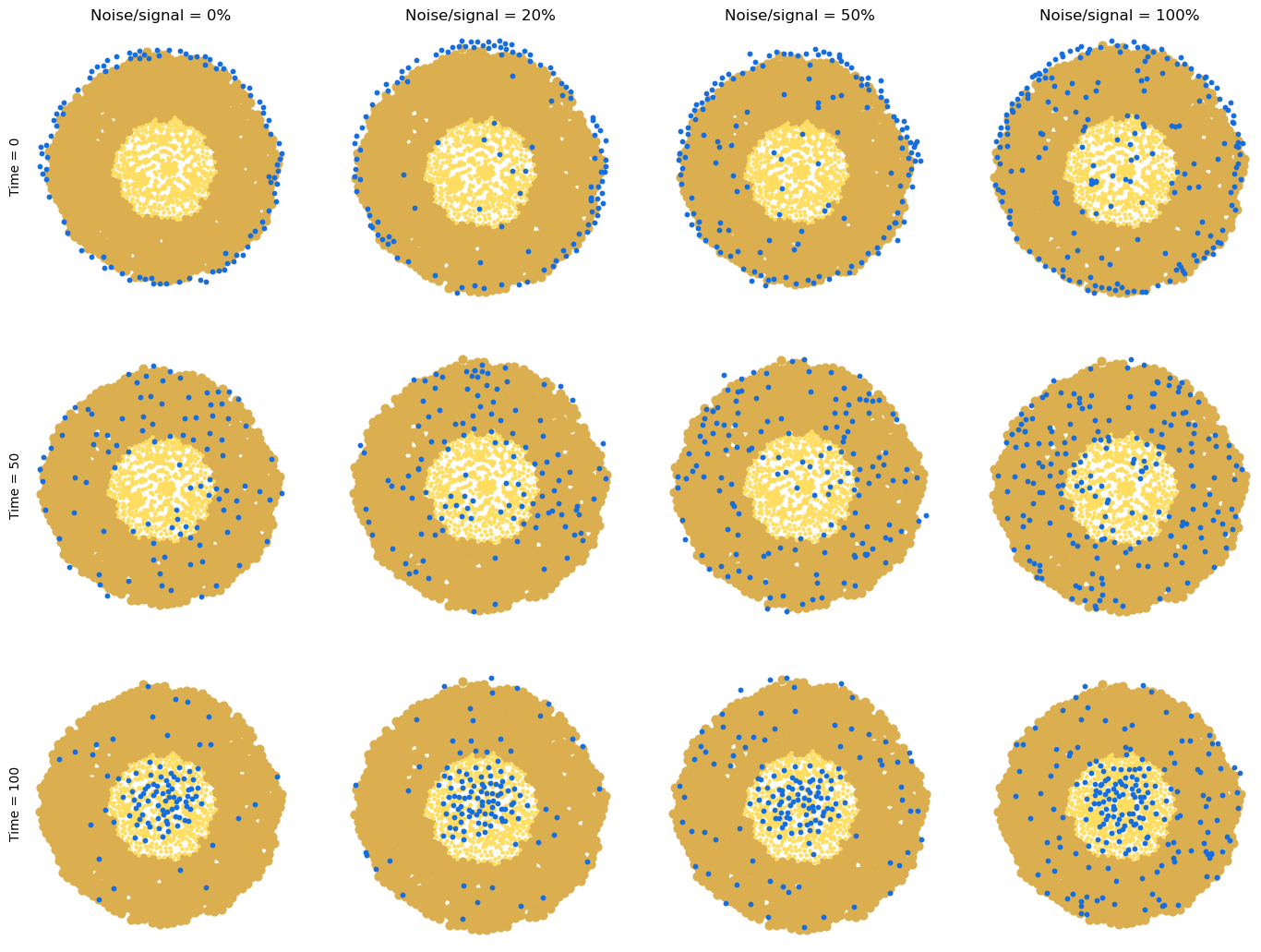}
  \caption{\textbf{Agent-based model simulation of immune cells (blue) infiltrating a tumour (yellow).}
  The cells start on the outside edge (first row) and end in the centre (third row).
  We consider data with different levels of background noise (columns), where increasing numbers of tumour cells are misclassified as immune cells.}
  \label{fig:ABM data}
\end{figure}

Figure \ref{fig:ABM data} shows agent-based model (ABM) data from \cite{Vipond}, which simulates the infiltration of immune cells into the centre of a tumour.
Cells start outside the tumour and end up in the middle.
The problem the authors of \cite{Vipond} consider is to find a statistic (called a \textit{biomarker} in this context) that measures the stage of this process and lets us track the infiltration of cells over time.
The ABMs are initialised with different \textit{chemotactic gradients} $\chi$ which change the rate of infiltration: when $\chi = 0$ the immune cells move slowly and when $\chi = 10$ they are roughly twice as fast.
A good biomarker will measure the stage of the process while clearly separating the different chemotaxis parameters.
Real data from biological and medical imaging are frequently corrupted with noise and outliers, and this is modelled by misclassifying some of the background tumour cells as immune cells.
The biomarker must still perform well even under the addition of this kind of noise.

As a benchmark, the authors of \cite{Vipond} consider biomarkers derived from persistent homology.
They compute Vietoris-Rips persistence diagrams for each collection of immune cells at each point in time with \textit{Ripser} \cite{bauer2021ripser}.
There are several standard statistics from these diagrams, and \cite{Vipond} uses the longest persistent bar in $H_1$, which measures the radius of the largest \q{hole} in the data.
We additionally consider the \q{total persistence} in $H_1$ (the sum of all the $H_1$ bars), which measures the overall amount of \q{holes} in the data\footnote{Trying other standard vectorisations of the persistence diagrams did not yield significantly different pictures than these two.}.

The results are shown in the first two rows of Figure \ref{fig:biomarkers}, where each biomarker is plotted against time.
Each ABM simulation is run five times, and we plot the means and standard deviation bars for each chemotaxis parameter and at each noise level.
The longest bar in $H_1$ (first row) shows a clear sigmoid transition curve which tracks the transition of cells from the outside to the inside of the tumour, although there is no significant separation between classes, particularly from time 60 onwards.
Conversely, total $H_1$ persistence gives better separation in the latter half, but is generally much harder to interpret.
Both persistent homology biomarkers are significantly distorted by even a small amount of noise and do not yield meaningful results for data with a noise/signal level over $50\%$.

\begin{figure}[!ht]
  \centering
  \captionsetup{width=0.66\linewidth}
  \includegraphics[width=\textwidth]{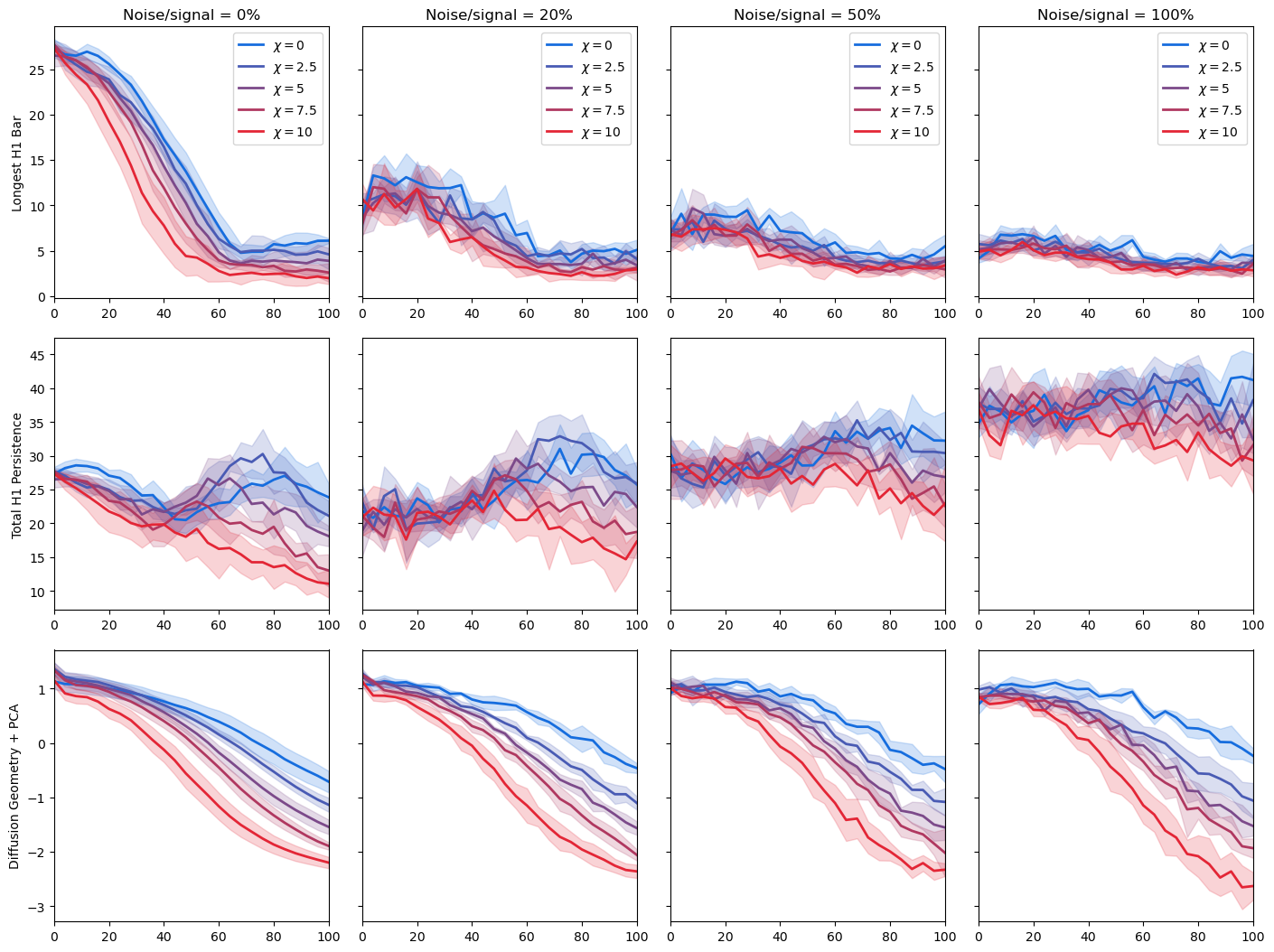}
  \caption{\textbf{Biomarkers for immune cell infiltration.} 
  Three different biomarkers (rows) are plotted over time for the four different noise levels (columns).
  The colours denote the different chemotaxis parameters.
  The persistent homology biomarkers (first two rows) are less descriptive than diffusion geometry (third row), with less chemotaxis class separation, and are significantly less robust to noise.
  }
  \label{fig:biomarkers}
\end{figure}

We compare these with a diffusion geometry biomarker.
The feature vectors described at the start of Section \ref{subsection feature vectors} encode the geometry of a point cloud in a very high-dimensional space, and we compute them for each simulation at each point in time.
We can reduce these high-dimensional vectors to a single dimension with principal component analysis (PCA) for use as a biomarker.

The results are shown in the third row of Figure \ref{fig:biomarkers}.
The diffusion geometry biomarker produces a very clear sigmoid curve as the cells transition from the outside to the inside of the tumour, with very clear chemotaxis parameter separation.
The different curves are also the same shape, except for an elongation that increases with the chemotaxis parameter, suggesting the correct intuition that the underlying process is the same but happening at different speeds.
As such, the diffusion geometry biomarker is strong enough to hint at the underlying biology without prior knowledge of the mechanism.
Crucially, it is also extremely robust to noise: even with $100\%$ noise it outperforms persistent homology with $0\%$ noise in all the above senses.

PCA assigns a weight to each of the features in the feature vector, and we can \textit{sparsify} this weight vector by setting all but the largest few entries to zero.
This produces a PCA feature that depends on only a few geometric features, which we can then interpret.
For example, we find that one of the features here is approximately equal to
$$
\Big[\lambda^0_1
- e^{-\lambda^0_1}
- e^{-10\lambda^0_1}
- e^{-50\lambda^0_1}
\Big]
+ \lambda^0_2
+ 
\Big[\lambda^1_1
- e^{-\lambda^1_1}
- e^{-10\lambda^1_1}
\Big]
$$
where $\lambda^0_i$ is the $i^{th}$ eigenvalue of the Laplacian and $\lambda^1_i$ is the $i^{th}$ eigenvalue of the Hodge Laplacian on 1-forms.
If $f$ is a function then its \textit{Dirichlet energy} $\|df\|^2$ measures how variable it is.
The eigenfunctions $\phi_1$ and $\phi_2$ always increase along the length of the data (see, for example, Figure \ref{fig:f5} (b)), and, since they are eigenfunctions of the Laplacian, the eigenvalues $\lambda_i$ are exactly the Dirichlet energies $\|d\phi_i\|^2$.
So for $\lambda_1$ and $\lambda_2$ to be small means that the data is more connected and compact.
We can also use intuition from Hodge theory (e.g. Figure \ref{fig:f4}) to interpret the Hodge Laplacian eigenvalue $\lambda^1_1$ as measuring the prominence of the \q{hole} in the middle of the data\footnote{We include some exponential eigenvalues $e^{-t\lambda_i^0}$ and $e^{-t\lambda_i^1}$ as additional data.
These are the eigenvalues of the corresponding diffusion operators $\exp(-tL)$ and $\exp(-t\Delta_1)$.}.
As such, we can interpret the decrease in this diffusion geometry feature as measuring the data becoming increasingly connected and compact, and decreasingly hollow.

\subsubsection{Supervised learning: classifying immune cell types}

We also test diffusion geometry feature vectors as a representation for supervised learning.
In the same paper \cite{Vipond}, the authors consider real histology images of slices of head and neck tumours. They use a semiautomated procedure \cite{bull2020combining} to identify the locations of three different types of immune cells: CD8, CD68, and FoxP3, and use persistent homology methods to classify the different cell types based on their spatial distribution.
Automatic cell identification is always vulnerable to the sort of misclassification noise described above, and so the authors use multiparameter persistent homology (MPH) landscapes \cite{vipond2020multiparameter} to mitigate the poor robustness of 1-parameter persistent homology to noise and outliers.

The MPH landscapes are used as feature vectors for linear discriminant analysis.
Conversely, we use the diffusion geometry features described above with logistic regression as a classifier and obtain similar or better results across all categories.

\begin{table}[!ht]
    \centering
    \captionsetup{width=0.66\linewidth}
    \begin{tabular}{c|cc}
         & MPH landscapes & Diffusion geometry  \\
        \hline
        CD8 vs FoxP3 & 74.7\% & 88.2 $\pm$ 5.9\% \\
        CD8 vs CD68 & 65.3\% & 62.4 $\pm$ 8.2\% \\
        FoxP3 vs CD68 & 86.3\% & 90.1 $\pm$ 5.4\% \\
    \end{tabular}
    \label{tab:cell classification}
\end{table}

\subsubsection{Other types of machine learning problem}

Diffusion geometry gives a unified framework for \q{strongly typed} machine learning problems on geometric data.
The features we compute are naturally graded into data in the form of
\begin{enumerate}
    \item[(-1)] numbers, for summaries of the whole point cloud,
    \item[(0)] functions, for segmentation-type problems,
    \item[(1)] vector fields/ 1-forms, for dynamical system and time series problems,
    \item[($k$)] $k$-forms for other analysis.
\end{enumerate}
Many of the operators defined in Section \ref{section_theory}, such as the differential and codifferential, and interior and wedge products, give the tools for moving things up and down this hierarchy.
This is important because different geometric machine learning problems are also located in this hierarchy: point cloud classification requires summary features, segmentation requires functional features, and dynamical systems are vector fields and require vector field and 1-form features.
In the feature vectors described above we just consider the first of these types, computing inner products between all the functions and forms to collapse all the data to the bottom level.
However, given a higher-order problem, we can select higher-order features and use them instead, as we will explore in future work.

\section{Computational Complexity}\label{section_complexity}

The computational framework outlined in Section \ref{section_estimation} gives us explicit control of the computational complexity (subsection \ref{subsection choosing basis}).
The only unavoidable cost is diagonalising the diffusion maps Laplacian, which has $\ord(n^3)$ complexity (where $n$ is the number of data).
After that, the functions and higher-order $k$-forms are represented by spaces of dimension $n_0$ and $n_1n_2^k$ respectively, where $n_0,n_1,n_2 \leq n$.
Crucially, we can vary $n_1$ and $n_2$ to explicitly trade off computational complexity against precision.

\begin{figure}[!ht]
  \centering
  \captionsetup{width=0.66\linewidth}
  \includegraphics[width=\textwidth]{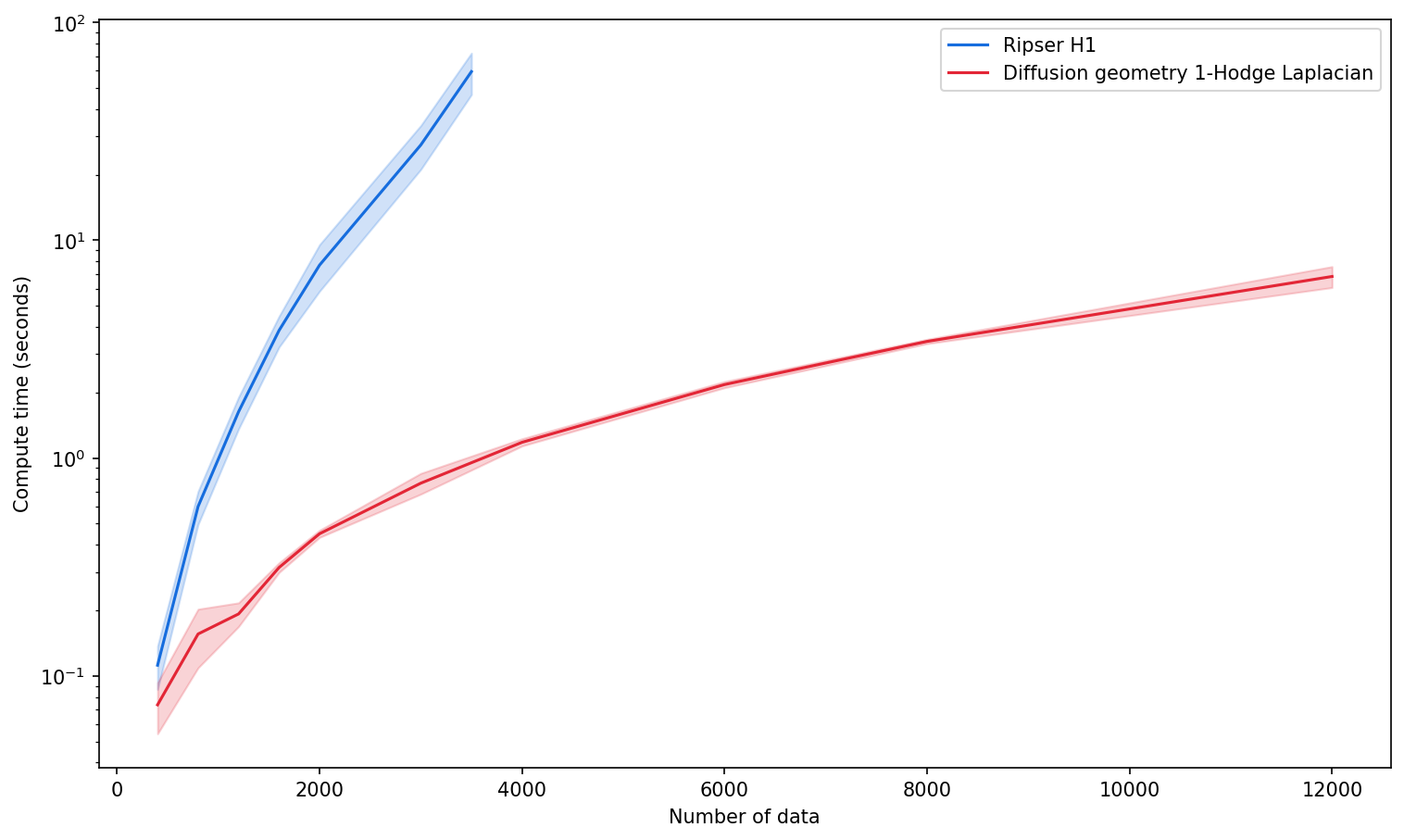}
  \caption{\textbf{Computational complexity of diffusion geometry and persistent homology.}
  Different numbers of data are sampled from a torus.
  We compute $H_1$ persistent homology with Ripser and diagonalise the Hodge Laplacian on 1-forms (the equivalent problem in diffusion geometry).
  We test each size 20 times and plot the mean and standard deviation bars on a logarithmic scale.
  Computing diffusion geometry for 12,000 points takes 6.8 seconds, while the equivalent computation of PH would take over two hours.
  }
  \label{fig:complexity}
\end{figure}

To test the consequences of this, we compare the computation of $H_1$ Vietoris-Rips persistent homology (PH) with diagonalising the Hodge Laplacian on 1-forms (the equivalent problem in diffusion geometry).
Computing $H_1$ persistent homology has $\ord(n^6)$ complexity, and diagonalising the Hodge Laplacian, with fixed $n_0 = 35$, $n_1 = 10$, $n_2 = 4$, has just $\ord(n^3)$ complexity.
We compute PH with Ripser \cite{bauer2021ripser}, which is very highly optimised.
We compute diffusion geometry with a naive implementation in NumPy \cite{harris2020array}, which is not yet optimised.
Data are sampled from a torus in $\R^3$ (the same as in Figure \ref{fig:cohom product}), and the computation is on a standard 2020 M1 Macbook Pro.
We plot the mean and standard deviation bars from 20 runs on a logarithmic scale in Figure \ref{fig:complexity}.
Computing eigenforms of the 1-Hodge Laplacian in diffusion geometry for 12,000 points takes 6.8 seconds, while the equivalent computation of PH in Ripser would take over two hours.

\section{Comparison with Related Work}\label{section_comparison}

\subsection{Theory}

The Bakry-Emery $\Gamma$-calculus discussed here was originally developed in \cite{bakry1985seminaire} to measure the subtle relationship between the geometry and the probability of diffusion operators.
Dirichlet forms like $\Gamma$ and the $\Gamma_2$ operator (which we do not discuss here) give explicit control of curvature and dimension and have a powerful regularising effect on diffusion, leading to a family of functional inequalities \cite{bakry2014analysis, ambrosio2015bakry}.
In a similar vein, the existing work on $\Gamma$-calculus explores the \q{geometry} of Markov diffusion operators as a tool for understanding their probabilistic aspects \cite{sturm2018ricci, arnold2001convex}.
The \q{nonsmooth differential geometry} programme of Gigli, Ambrosio, Savaré and others \cite{ambrosio2014calculus, gigli2018nonsmooth, gigli2020lectures, ambrosio2018calculus, ambrosio2014metric} is spiritually similar to this and explicitly constructs gradients on metric measure spaces using Sobolev weak derivatives.
The approach to \q{Hodge theory on metric spaces} in \cite{bartholdi2012hodge} gives another definition of exterior calculus on metric spaces, but is more topological and is akin to simplicial cohomology.

In this work, we have used the $\Gamma$-calculus to \textit{define} geometry in the most general setting, and our theory agrees with Riemannian geometry, nonsmooth differential geometry, and the constructions in $\Gamma$-calculus, where they apply.
Crucially, by working exclusively with $\Gamma$-calculus we obtain a natural computational model for the whole theory.

\subsection{Computation}

Our estimation of the Laplacian from data uses diffusion maps \cite{COIFMAN20065}, one of many such methods in the richly developed field of \textit{manifold learning}, and we may explore others in the future.

There are several existing approaches for computing objects from differential geometry.
Many assume the manifold hypothesis (that the data are drawn from a manifold), and so can use methods like local principal component analysis (PCA) \cite{kambhatla1997dimension} that depend heavily on that assumption.
This allows the computation of, for example, the Hessian \cite{donoho2003hessian}, the connection Laplacian \cite{singer2012vector}, and curvature \cite{yang2006robust}, but these methods break down when local PCA does (when the manifold hypothesis fails or there is too much noise).
\q{Finite-element exterior calculus} uses an explicit discretisation of a manifold and gives an overarching theory for computation \cite{arnold2006finite, arnold2010finite} in this setting.
A significant influence on this work was Berry and Giannakis' \q{spectral exterior calculus} \cite{berry2020spectral} which derived formulae for the Hodge Laplacian of 1-forms on a manifold in terms of the eigenfunctions of the Laplacian.
We use the same eigenfunction expressions as a computational framework for diffusion geometry.

There are fewer computational geometry frameworks that do not assume the manifold hypothesis, and they generally use simplicial complexes instead, with simplicial cochains playing the role of differential forms.
An overarching theory is given by \q{discrete exterior calculus} \cite{desbrun2005discrete}, and includes many of the objects we define here.
However, explicitly constructing a simplicial complex from a sample of data (such as by connecting nearest neighbours) is problematic, and generally leads to poor noise-robustness.
This is partially addressed by persistent homology (see below) but at the expense of much of the geometry.

Diffusion geometry computes objects from Riemannian geometry, so is a rich geometric measure of the data, but without assuming the manifold hypothesis, so is naturally adapted to the sorts of data encountered in real problems.

\subsection{Data analysis}

Perhaps the most popular tool for topological data analysis is persistent homology (PH) \cite{robins1999towards, edelsbrunner2002topological, zomorodian2004computing}, which tracks the changes in homology on a filtered simplicial complex.
The most popular filtrations use a varying radius around each data point to connect them up, and so PH measures the change in homology across scales.
However, these radius filtrations are not robust to noise (e.g. Figure \ref{fig:biomarkers} and \cite{turkevs2021noise}), and are not statistical estimators of well defined geometric properties of the underlying probability space\footnote{In the large-data limit, PH of radius filtrations converges to the PH of the support of the distribution, which is density-independent.}.
An alternative to radius filtration is the sublevel set filtration of the underlying probability density function, which can be estimated with a kernel \cite{bobrowski2017topological}. 
These are generally harder to compute \cite{shinconfidence} and are less widely applied because they do not measure scale.
Radius and density can be combined in a bifiltration, which results in multiparameter persistent homology (MPH) \cite{carlsson2007theory}. 
However, computing MPH is very computationally expensive \cite{otter2017roadmap, carlsson2009computing} and it has no complete \q{discrete invariants} (like a barcode for 1-parameter PH), and so is hard to quantify statistically \cite{harrington2019stratifying, blumberg2022stability}.

Diffusion geometry is highly robust to noise (Figures \ref{fig:biomarkers}, \ref{fig:manifold_hypothesis_1}, and \ref{fig:tangent_bundle_1}) and fast to compute (Figure \ref{fig:complexity}).

\section{Conclusions}

This work introduces \textit{diffusion geometry} as a framework for geometric and topological data analysis. 

We constructed a theory of Riemannian geometry on measure spaces using the Bakry-Emery $\Gamma$-calculus, which leads to a natural model for computation.
We compute the diffusion geometry of point cloud data with the \textit{diffusion maps} Laplacian, leading to computationally inexpensive and highly robust estimators for the geometry of the underlying probability distribution.

We view diffusion geometry as a statistical tool for geometric and topological data analysis, and computational geometry and topology.
We find that it outperforms existing methods in a handful of real and synthetic examples.
Diffusion geometry has broad potential for future development.

\section*{Acknowledgements}
I am extremely grateful to my supervisor Jeff Giansiracusa for pointing me in the direction that eventually became this project, and his continual support throughout.
I would also like to thank my supervisor Yue Ren, along with Fernando Galaz Garcia, Andrew Krause, David Lanners, Uzu Lim, Jerry Swan, Kelly Maggs, and Thea Stevens for their generous comments, suggestions, and advice, and Josh Bull for his permission to reproduce the ABM simulations in Section \ref{section_ml}.
This work was carried out as part of the Centre for TDA, supported by EPSRC grant EP/R018472/1. 
\newpage

\section{Appendix A: Proofs}\label{appendix proofs}

We now justify the definitions offered in Section \ref{section_theory}, and prove the results stated there.
In the following, we will make use of the following standard properties of Markov diffusion triples, which we reference from \cite{bakry2014analysis}:
\begin{enumerate}
    \item Positivity (1.4.2): $\Gamma(f,f) \geq 0$, which implies
    \item Cauchy-Schwarz inequality (1.4.3): $\Gamma(f,h)^2 \leq \Gamma(f,f) \Gamma(h,h)$.
    \item General diffusion property (3.1.2): for all $f_1,...,f_k \in \A$ and a smooth function $\phi:\R^k\rightarrow\R$ which vanishes at 0, $\phi(f_1,...,f_k) \in \A$ and
    $$
    \Gamma(\phi(f_1,...,f_k),h) = \sum_{i=1}^k \partial_i\phi(f_1,...,f_k) \Gamma(f_i,h)
    $$
    for all $h \in \A$.
    \item Gradient bound (3.1.5): for each $f \in \A$ there is a finite constant $C(f)$ such that
    $$
    \Big| \int \Gamma(f,h)  d\mu \Big| \leq C(f) \|h\|_2
    $$
    for all $g \in \A$.
\end{enumerate}
The general diffusion property implies the standard one by setting $\phi(f_1,f_2) = f_1f_2$.
The gradient bound condition means that $\Gamma$ uniquely specifies an infinitesimal generator $L$ by the integration-by-parts formula
$$
\int h L(f) d\mu = \int \Gamma(f,h) d\mu.
$$

\subsection{Differential forms}

We first justify the definition of differential forms: the inner product is positive semi-definite on $\A \otimes \bigwedge^k \A$ and so descends to a positive definite inner product when we take the quotient by its kernel.

\positivemetric*

\begin{proof}
It is straightforward to check that $g$ is symmetric and bilinear. To show that it is also positive semi-definite, we will take some $\alpha \in \A \otimes \bigwedge^k \A$ and evaluate $g(\alpha,\alpha)(x)$ at some particular point $x$.
Since elements of the tensor product $\A \otimes \bigwedge^k \A$ are \textit{finite} linear combinations of the irreducible elements, we can write
$$
\alpha = \sum_{i=1}^N f_0^i \otimes ( f_1^i \wedge \ddd \wedge f_k^i)
$$
for some finite $N$ depending on $\alpha$. To avoid any infinite sums and the question of convergence, we define $A_\alpha$ to be the (at most) $N(k+1)$-dimensional subspace of $\A$ spanned by
$$
\{f_j^i : i = 1,...,N,\ j = 0,...,k \}.
$$
The carre du champ $\Gamma(\cdot,\cdot)(x)$ evaluated at $x$ defines a symmetric, positive semi-definite, bilinear form on $A_\alpha$, so let us take a basis for the kernel of $\Gamma(\cdot,\cdot)(x)$, and extend it to a basis for the rest of $A_\alpha$. 
We can then apply the Gram-Schmidt orthonormalisation to the positive definite basis elements to produce a basis $e_i$ for $A_\alpha$ where
\[   
\Gamma(e_i,e_j)(x) = 
     \begin{cases}
       0 & \Gamma(e_i,e_i)(x)=0  \text{ or } \Gamma(e_j,e_j)(x)=0\\
       \delta_{ij} &\quad\text{otherwise.} \\ 
     \end{cases}
\]
The $\Gamma(e_i,e_j)(x)=0$ case follows from the Cauchy-Schwarz inequality for $\Gamma$. 
So, after a change of basis, we can express $\alpha$ as
$$
\alpha = \sum_{i=1}^{N(k+1)} h^i \otimes (e_1^i \wedge \ddd \wedge e_k^i)
$$
where each $e_l^i$ is an element of the basis we just constructed, and $h^i$ is a function in $A_\alpha$. 
Using the fact that $f_1^i \wedge \ddd \wedge f_k^i$ is an element of the exterior algebra $\bigwedge^k \A$, we can ensure that the ordering of basis elements $e_l^i$ by $l$ respects the order of the basis. 
By keeping $h^i$ as an arbitrary function, and not an element of the basis, we can also assume without loss of generality that
$$
(e_1^i,...,e_k^i) \neq (e_1^j,...,e_k^j)
$$
for all $i \neq j$ (since otherwise we can combine their coefficient functions $h^i$). 
We can now write
$$
g(\alpha,\alpha)(x) = \sum_{i,j} h^i(x) h^j(x) \det\big[\big(\Gamma(e_l^i, e_m^j)(x)\big)_{l,m}\big].
$$
To simplify the sum, we first remove the terms which contain a basis function in the kernel of $\Gamma(\cdot,\cdot)(x)$. 
If the index set for $i$ is $I = \{1,...,N(k+1)\}$, we can split up $I = I_0 \sqcup I_+$ where
$$
I_0 = \{i \in I : \Gamma(e_l^i,e_l^i)(x)=0 \text{ for some } l = 1,...,k \}
$$
are the indices of those forms containing a function in the kernel of $\Gamma(\cdot,\cdot)(x)$ and
$$
I_+ = \{i \in I : \Gamma(e_l^i,e_l^i)(x)>0 \text{ for all } l = 1,...,k \}
$$
are the indices of those forms entirely comprising functions on which $\Gamma(\cdot,\cdot)(x)$ is positive. 
If $i \in I_0$ (or $j \in I_0$), then
$$
\Gamma(e^i_l, e^j_m)(x)^2 \leq \Gamma(e^i_l, e^i_l)(x)\Gamma(e^j_m, e^j_m)(x) = 0
$$
for some $l$ (or $m$) and all $m$ (or $l$), and so the matrix $\big(\Gamma(e^i_l, e^j_m)(x)\big)_{l,m}$ must contain a row (or column) of zeros, and so has zero determinant. This means that
\[   
\det\big[\big(\Gamma(e_l^i, e_m^j)(x)\big)_{l,m}\big] = 
     \begin{cases}
       0 & i \in I_0 \text{ or } j \in I_0\\
       \delta_{ij} &\quad i,j \in I_+ \\ 
     \end{cases}
\]
and so we can simplify
$$
g(\alpha,\alpha)(x) = \sum_{i \in I_+} h^i(x)^2 \geq 0.
$$
This holds for all $x$, so $g(\alpha,\alpha) \geq 0$ for all $\alpha \in \A \otimes \bigwedge^k \A$, which implies the Cauchy-Schwarz inequality for $g$.
It then follows that $\inp{\cdot}{\cdot} = \int g(\cdot, \cdot) d\mu$ is a symmetric, positive semi-definite, bilinear form on $\A \otimes \bigwedge^k \A$, and so descends to a positive definite inner product on $\Omega^k(M)$.
We also see that $g$ descends to a well defined map on the quotient, since $\|\alpha\| = 0$ if and only if $g(\alpha,\alpha) = 0$ almost everywhere, and so
$$
g(\alpha, \beta)^2 \leq g(\alpha,\alpha) g(\beta,\beta) = 0
$$
for all $\beta \in \Omega^k(M)$.
\end{proof}

\subsection{Wedge product and exterior derivative}

We verify that the wedge product is well defined and that the usual calculus rules hold for $d_0$.

\wedgewelldefined*
\begin{proof}
We will work with the same machinery as in the proof of Proposition \ref{positive_metric}. 
This time we consider two forms $\alpha$ and $\beta$, so let $A = A_\alpha \cup A_\beta$ be the finite subspace of $\A$ spanned by the functions that feature in the expansion of $\alpha$ and $\beta$. 
As before, let $\{e_i\}$ be a basis for $A$ which is orthonormal on the positive definite subspace of $\Gamma(\cdot,\cdot)(x)$. We can then expand 
$$
\alpha = \sum_i f^i \otimes (e_1^i \wedge \ddd \wedge e_k^i)
$$
and
$$
\beta = \sum_j h^j \otimes (e_1^j \wedge \ddd \wedge e_l^j)
$$
where each $e_p^i$ and $e_p^j$ is an element of the basis, the ordering of basis elements $e_p^i$ and $e_p^j$ by $p$ respects the order of the basis, and
$$
(e_1^i,...,e_k^i) \neq (e_1^{i'},...,e_k^{i'})
\qquad
(e_1^j,...,e_l^j) \neq (e_1^{j'},...,e_l^{j'})
$$
for all $i \neq i'$ and $j \neq j'$. We can then evaluate $g(\alpha\wedge\beta,\alpha\wedge\beta)(x)$ as
$$
\sum_{i,i',j,j'} f^i(x) f^{i'}(x) h^j(x) h^{j'}(x) \det\big[\big(\Gamma((e^i\wedge e^j)_p, (e^{i'}\wedge e^{j'})_q)(x)\big)_{p,q}\big],
$$
where by $(e^i\wedge e^j)_p$ we mean the $p\thupper$ element of $(e_1^i,...,e_k^i,e_1^j,...,e_l^j)$.
As before, we simplify this sum by removing the zero terms, which can now appear in two ways. 
If the index sets for $i$ and $j$ are $I$ and $J$, we decompose $I = I_0 \sqcup I_+$ and $J = J_0 \sqcup J_+$ just like in Proposition \ref{positive_metric}. 
Now suppose that $i,j,i',j' \in I_+$, and neither $e^i$ and $e^j$ nor $e^{i'}$ and $e^{j'}$ share a common basis function. 
Let $\pi$ and $\pi'$ be the permutations of $e^i\wedge e^j$ and $e^{i'}\wedge e^{j'}$ into their basis orders, which are unique since the functions are all distinct. 
Then
\begin{equation*}
    \begin{split}
        \det\big[\big(\Gamma((e^i\wedge e^j)_p, (e^{i'}\wedge e^{j'})_q)(x)\big)_{p,q}\big]
        &= \delta_{ii'}\delta_{jj'} (-1)^{\sign(\pi)} (-1)^{\sign(\pi')} \\
        &= \delta_{ii'}\delta_{jj'} \big((-1)^{\sign(\pi)}\big)^2 \\
        &= \delta_{ii'}\delta_{jj'}
    \end{split}
\end{equation*}
since, if $i=i'$ and $j = j'$, then $\pi = \pi'$. 
If either $e^i$ and $e^j$ or $e^{i'}$ and $e^{j'}$ do share a common function, suppose without loss of generality that it is shared by both $e^i$ and $e^j$. 
Let $\pi$ be a permutation of $e^i\wedge e^j$ into its basis order. 
The repeated function means there exists a transposition $\Tilde{\pi}$ that fixes $\pi(e^i\wedge e^j)$, so
\begin{equation*}
    \begin{split}
        &\det\big[\big(\Gamma((e^i\wedge e^j)_p, (e^{i'}\wedge e^{j'})_q)(x)\big)_{p,q}\big]\\
        =\ & (-1)^{\sign(\pi)}\det\big[\big(\Gamma((\pi(e^i\wedge e^j))_p, (e^{i'}\wedge e^{j'})_q)(x)\big)_{p,q}\big] \\
        =\ & (-1)^{\sign(\Tilde{\pi})}(-1)^{\sign(\pi)}\det\big[\big(\Gamma((\Tilde{\pi}(\pi(e^i\wedge e^j)))_p, (e^{i'}\wedge e^{j'})_q)(x)\big)_{p,q}\big] \\
        =\ & -(-1)^{\sign(\pi)}\det\big[\big(\Gamma((\pi(e^i\wedge e^j))_p, (e^{i'}\wedge e^{j'})_q)(x)\big)_{p,q}\big] \\
        =\ & -\det\big[\big(\Gamma((e^i\wedge e^j)_p, (e^{i'}\wedge e^{j'})_q)(x)\big)_{p,q}\big]
    \end{split}
\end{equation*}
and hence
$$
\det\big[\big(\Gamma((e^i\wedge e^j)_p, (e^{i'}\wedge e^{j'})_q)(x)\big)_{p,q}\big] = 0.
$$
Let $S_0$ be the subset of indices $(i,j) \in I_+ \times J_+$ such that $e^i$ and $e^j$ share a common basis function, and let $S_+ = (I_+\times J_+) \setminus S_0$. 
Then, using the same argument as in Proposition \ref{positive_metric} for the indices in $I_0$ and $J_0$, we can conclude that
\[   
\det\big[\big(\Gamma((e^i\wedge e^j)_p, (e^{i'}\wedge e^{j'})_q)(x)\big)_{p,q}\big] = 
     \begin{cases}
       0 & i \in I_0 \text{ or } j \in I_0 \text{ or } i' \in I_0 \text{ or } j' \in I_0\\
       0 &\quad (i,j) \in S_0 \text{ or } (i',j') \in S_0 \\ 
       \delta_{ii'}\delta_{jj'} &\quad (i,j) \in S_+ \text{ and } (i',j') \in S_+ \\ 
     \end{cases}
\]
and simplify
\begin{equation*}
    \begin{split}
        g(\alpha\wedge\beta,\alpha\wedge\beta)(x)
        &= \sum_{(i,j) \in S_+} \big(f^i(x) h^j(x) \big)^2 \\
        &\leq \sum_{i \in I_+, j \in J_+} \big(f^i(x) h^j(x) \big)^2 \\
        &= \sum_{i \in I_+} f^i(x)^2 \sum_{j \in J_+} h^j(x)^2 \\
        &= g(\alpha,\alpha)(x)g(\beta,\beta)(x).
    \end{split}
\end{equation*}
Notice that, as on a manifold, there is equality at $x$ if and only if $S_0 = \varnothing$, i.e. the spans of the 1-forms comprising $\alpha$ and $\beta$ are orthogonal.
We then have, for fixed $\beta$,
$$
\int g(\alpha\wedge\beta,\alpha\wedge\beta) d\mu
\leq \int g(\alpha,\alpha)g(\beta,\beta) d\mu
\leq \sup(g(\beta,\beta)) \int g(\alpha,\alpha) d\mu,
$$
so $\|\alpha\wedge\beta\| \leq \sqrt{\sup(g(\beta,\beta))} \|\alpha\|$, and likewise for $
\beta$. 
In particular, if $\|\alpha\| = 0$ or $\|\beta\| = 0$ then $\|\alpha\wedge\beta\| = 0$, so the wedge product is well defined on $\Omega^k(M) \times \Omega^l(M)$, and is a bounded linear operator in each argument.
\end{proof}

\calculusrulesd*
\begin{proof}
The general diffusion property implies that
\begin{equation*}
\begin{split}
\inp{d\phi(f_1,...,f_k)}{h'dh}
&= \int h'\Gamma(\phi(f_1,...,f_k),h) d\mu \\
&= \sum_{i=1}^k \int \partial_i\phi(f_1,...,f_k) h' \Gamma(f_i,h) d\mu \\
&= \sum_{i=1}^k \inp{\partial_i\phi(f_1,...,f_k) df_i}{h'dh}.
\end{split}
\end{equation*}
The chain rule then follows from the non-degeneracy of the inner product and the fact that terms $h'dh$ span $\Omega^1(M)$. The Leibniz rule follows by setting $\phi(f_1,f_2) = f_1f_2$.
\end{proof}

\subsection{First-order calculus: vector fields and duality}

The interior product is also well defined.

\interiorproductprop*
\begin{proof}
It is clear that condition (3) in the definition is compatible with (1) and (2), and that $i_\alpha(\beta)$ is linear in $\alpha$ and $\beta$ for $k = 0,1$. 
Now take $\beta = f_0 df_1 \wedge \ddd \wedge df_k \in \Omega^k(M)$. 
By definition, we must have
$$
i_\alpha(\beta) = i_\alpha(f_0 df_1 \wedge \ddd \wedge df_{k-1}) \wedge df_k - (-1)^k g(\alpha,df_k) f_0 df_1 \wedge \ddd \wedge df_{k-1}.
$$
Since $f_0 df_1 \wedge \ddd \wedge df_{k-1} \in \Omega^{k-1}(M)$, this expression gives an inductive definition for $i_\alpha(\beta)$, which we extend linearly to all $\beta \in \Omega^k(M)$. Linearity in $\alpha$ also follows by the same induction on $k$.

Notice that the antisymmetry property holds vacuously for $k = 0, 1$, since $i_\gamma i_\alpha(\beta) = i_\alpha i_\gamma (\beta) = 0$, but in general
\begin{equation*}
\begin{split}
i_\gamma i_\alpha(\beta)
&= i_\gamma i_\alpha(f_0 df_1 \wedge \ddd \wedge df_{k-1}) \wedge df_k \\
& \qquad + (-1)^k g(\gamma,df_k) i_\alpha(f_0 df_1 \wedge \ddd \wedge df_{k-1}) \\
& \qquad - (-1)^k g(\alpha,df_k) i_\gamma \big( f_0 df_1 \wedge \ddd \wedge df_{k-1} \big).
\end{split}
\end{equation*}
We can inductively assume that $i_\alpha i_\gamma$ is antisymmetric on $\Omega^{k-1}(M)$, and obtain
\begin{equation*}
\begin{split}
i_\alpha i_\gamma(\beta)
&= -i_\gamma i_\alpha(f_0 df_1 \wedge \ddd \wedge df_{k-1}) \wedge df_k \\
& \qquad + (-1)^k g(\alpha,df_k) i_\gamma \big( f_0 df_1 \wedge \ddd \wedge df_{k-1} \big) \\
& \qquad - (-1)^k g(\gamma,df_k) i_\alpha(f_0 df_1 \wedge \ddd \wedge df_{k-1}) \\
&= -i_\gamma i_\alpha(\beta)
\end{split}
\end{equation*}
which proves the result.
\end{proof}

\subsection{Second-order calculus: Hessian, covariant derivative, and Lie bracket}

To motivate the definition of the Hessian (Definition \ref{hessian def}), we derive the corresponding formula for the Hessian on a manifold.
We will use two standard facts from the Riemannian geometry of manifolds: that the Hessian is given by
\begin{equation}
\label{manifold hess def}
H(f)(X,Y) = g(\nabla_X(\nabla f), Y)
\end{equation}
and the Levi-Civita connection satisfies the Koszul formula
\begin{equation}
\label{koszul formula manifold}
\begin{split}
g(\nabla_X Y,Z) = \nabla Y (X,Z) = \frac{1}{2}\Big(
& X(g(Y,Z)) + Y(g(Z,X)) - Z(g(X,Y)) \\
&+ g([X,Y],Z) - g([Y,Z],X) + g([Z,X],Y) \Big).
\end{split}
\end{equation}
To avoid confusion we will stick with the carré du champ notation, so here $\Gamma(f,h) = g(\nabla f,\nabla h) = \nabla f(h)$.

\begin{prop}\label{manifold hessian}
The Hessian on a manifold satisfies
$$
H(f)(\nabla a,\nabla b) = \frac{1}{2}\big( \Gamma(a, \Gamma(f,b)) + \Gamma(b, \Gamma(f,a)) - \Gamma(f, \Gamma(a,b)) \big).
$$
\end{prop}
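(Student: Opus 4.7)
The plan is to start from the manifold definition $H(f)(X,Y) = g(\nabla_X \nabla f, Y)$ given in equation (\ref{manifold hess def}), specialize it by taking $X = \nabla a$ and $Y = \nabla b$, and then invoke the Koszul formula (\ref{koszul formula manifold}) with the triple $(X,Y,Z) = (\nabla a, \nabla f, \nabla b)$. This rewrites $H(f)(\nabla a, \nabla b)$ as one half of a sum of six terms: three of the form $V(g(W,W'))$ with $V,W,W'$ ranging over $\nabla a, \nabla f, \nabla b$, and three of the form $g([V,W], W')$.

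The next step is a purely translational one. Under the dictionary $g(\nabla u, \nabla v) = \Gamma(u,v)$ and $(\nabla u)(h) = \Gamma(u,h)$, the three derivative terms become $\Gamma(a,\Gamma(f,b))$, $\Gamma(f,\Gamma(a,b))$, $\Gamma(b,\Gamma(a,f))$ with the signs $(+,+,-)$ inherited from Koszul. For the Lie bracket terms, I would use the identity $g([V,W],\nabla h) = [V,W](h) = V(W(h)) - W(V(h))$ so that, for instance,
\begin{equation*}
g([\nabla a, \nabla f], \nabla b) = \Gamma(a, \Gamma(f,b)) - \Gamma(f, \Gamma(a,b)),
\end{equation*}
and analogously for the other two bracket terms.

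Once all six contributions are expressed in terms of the three iterated carré du champ monomials $\Gamma(a,\Gamma(f,b))$, $\Gamma(b,\Gamma(f,a))$, $\Gamma(f,\Gamma(a,b))$, the final step is an arithmetic tally of coefficients: each of the first two monomials collects a net coefficient of $+1$ and the third a net coefficient of $-1$, which, after dividing by $2$, produces exactly the claimed formula.

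I do not expect any real obstacle here; the argument is essentially a substitution plus a small bookkeeping exercise. The only place where care is needed is tracking the signs and the symmetry $\Gamma(u,v) = \Gamma(v,u)$ when matching the Koszul terms to the three target monomials, so I would arrange the calculation in a short table of coefficients before writing it up.
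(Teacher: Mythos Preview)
Your proposal is correct and follows essentially the same route as the paper: specialize $H(f)(X,Y)=g(\nabla_X\nabla f,Y)$ to gradient vector fields, apply the Koszul formula with $(X,Y,Z)=(\nabla a,\nabla f,\nabla b)$, rewrite each of the six terms via $g(\nabla u,\nabla v)=\Gamma(u,v)$ and $g([\nabla u,\nabla v],\nabla w)=\Gamma(u,\Gamma(v,w))-\Gamma(v,\Gamma(u,w))$, and then collect coefficients. The paper writes out the full nine-term expansion explicitly rather than tabulating, but the argument is the same.
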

\begin{proof}
We directly invoke (\ref{manifold hess def}) and (\ref{koszul formula manifold}) to compute
\begin{equation*}
\begin{split}
H(f)(\nabla a,\nabla b)
&= g(\nabla_{\nabla a}(\nabla f), \nabla b) \\
&= \frac{1}{2}\Big( \nabla a(g(\nabla f,\nabla b)) + \nabla f(g(\nabla b,\nabla a)) - \nabla b(g(\nabla a,\nabla f)) \\
&\qquad + g([\nabla a,\nabla f],\nabla b) - g([\nabla f,\nabla b],\nabla a) + g([\nabla b,\nabla a],\nabla f) \Big)
\end{split}
\end{equation*}
and note that
\begin{equation*}
\begin{split}
g([\nabla a,\nabla f],\nabla b)
&= [\nabla a,\nabla f](b) \\
&= \nabla a (\nabla f(b)) - \nabla f (\nabla a(b)) \\
&= \Gamma(a, \Gamma(f,b)) - \Gamma(f, \Gamma(a,b)) \\
\end{split}
\end{equation*}
so obtain
\begin{equation*}
\begin{split}
H(f)(\nabla a,\nabla b)
&= \frac{1}{2}\Big( \Gamma(a, \Gamma(f,b)) + \Gamma(f, \Gamma(a,b)) - \Gamma(b, \Gamma(f,a)) \\
&\qquad + \Gamma(a, \Gamma(f,b)) - \Gamma(f, \Gamma(a,b)) - \Gamma(f, \Gamma(b,a)) \\
&\qquad + \Gamma(b, \Gamma(f,a)) + \Gamma(b, \Gamma(f,a)) - \Gamma(a, \Gamma(f,b)) \Big) \\
&= \frac{1}{2}\big( \Gamma(a, \Gamma(f,b)) + \Gamma(b, \Gamma(f,a)) - \Gamma(f, \Gamma(a,b)) \big)
\end{split}
\end{equation*}
for all $f,a,b$.
\end{proof}

With the definition for the Hessian offered here, we can derive the usual calculus rules.

\hessianleibnizrule*
\begin{proof}
We can compute
\begin{equation*}
\begin{split}
H(fh)(\nabla a, \nabla a)
&= \Gamma(a,\Gamma(fh,a)) - \frac{1}{2}\Gamma(fh,\Gamma(a,a)) \\
&= \Gamma(a,f\Gamma(h,a) + h\Gamma(f,a)) - \frac{1}{2}f\Gamma(h,\Gamma(a,a)) - \frac{1}{2}h\Gamma(f,\Gamma(a,a)) \\
&= f\Gamma(a,\Gamma(h,a)) + \Gamma(f,a)\Gamma(h,a) + h\Gamma(a,\Gamma(f,a)) + \Gamma(f,a)\Gamma(h,a) \\
& \qquad - \frac{1}{2}f\Gamma(h,\Gamma(a,a)) - \frac{1}{2}h\Gamma(f,\Gamma(a,a)) \\
&= f\big[ \Gamma(a,\Gamma(h,a)) - \frac{1}{2}\Gamma(h,\Gamma(a,a)) \big] + h\big[ \Gamma(a,\Gamma(f,a)) - \frac{1}{2}\Gamma(f,\Gamma(a,a)) \big] + 2\Gamma(f,a)\Gamma(h,a) \\
&= \Big[f H(h) + h H(f) + 2 (df \otimes dh) \Big] (\nabla a, \nabla a)
\end{split}
\end{equation*}
from which the result follows by polarisation and linearity.
\end{proof}

\hessianproductrule*
\begin{proof}
We can compute
\begin{equation*}
\begin{split}
H(f_1)(\nabla f_2, \nabla h) + H(f_2)(\nabla f_1, \nabla h)
&= \frac{1}{2} \Big( \Gamma(f_2, \Gamma(f_1,h)) + \Gamma(h, \Gamma(f_1,f_2)) - \Gamma(f_1, \Gamma(f_2,h)) \\
& \qquad \Gamma(f_1, \Gamma(f_2,h)) + \Gamma(h, \Gamma(f_1,f_2)) - \Gamma(f_2, \Gamma(f_1,h)) \Big)\\
&= \Gamma(h, \Gamma(f_1,f_2)) \\
&= d\big(\Gamma(f_1,f_2)\big)(\nabla h) \\
\end{split}
\end{equation*}
and extend $\nabla h$ to an arbitrary $X \in \mathfrak{X}(M)$ by linearity.
\end{proof}

We can derive the following formula for evaluating the Hessian.

\hessianevalformulae*
\begin{proof}
Suppose first that $X = a_0\nabla a_1$ and $Y = b_0 \nabla b_1$. We evalutate
\begin{equation*}
\begin{split}
g(X, [Y,\nabla f])
&= g( a_0\nabla a_1, [b_0 \nabla b_1,\nabla f]) \\
&= a_0 [b_0 \nabla b_1,\nabla f](a_1) \\
&= a_0 \big( b_0 [\nabla b_1,\nabla f] - \Gamma(f,b_0)\nabla b_1 \big) (a_1) \\
&= a_0 b_0 \Gamma\big( b_1, \Gamma(f, a_1) \big) - a_0b_0 \Gamma\big(f, \Gamma(a_1, b_1) \big) - a_0\Gamma(f,b_0)\Gamma(a_1, b_1)
\end{split}
\end{equation*}
and
$$
\Gamma\big(f, a_0b_0\Gamma(a_1,b_1)\big)
= a_0b_0\Gamma\big(f, \Gamma(a_1,b_1)\big)
+ a_0 \Gamma(f, b_0) \Gamma(a_1,b_1) 
+ b_0 \Gamma(f, a_0) \Gamma(a_1,b_1) 
$$
so
\begin{equation*}
\begin{split}
&\frac{1}{2}\Big(g(X, [Y,\nabla f]) + g(Y,[X,\nabla f]) + \Gamma(f, g(X,Y)) \Big) \\
=\ & \frac{1}{2}\Big[a_0 b_0 \Gamma\big( b_1, \Gamma(f, a_1) \big) - a_0b_0 \Gamma\big(f, \Gamma(a_1, b_1) \big) - a_0\Gamma(f,b_0)\Gamma(a_1, b_1) \\
&\qquad + a_0 b_0 \Gamma\big( a_1, \Gamma(f, b_1) \big) - a_0b_0 \Gamma\big(f, \Gamma(a_1, b_1) \big) - b_0\Gamma(f,a_0)\Gamma(a_1, b_1) \\
&\qquad + a_0b_0\Gamma\big(f, \Gamma(a_1,b_1)\big) + a_0 \Gamma(f, b_0) \Gamma(a_1,b_1) + b_0 \Gamma(f, a_0) \Gamma(a_1,b_1)  \Big] \\
=\ & \frac{1}{2}a_0 b_0\Big(\Gamma\big( a_1, \Gamma(f, b_1) \big) + \Gamma\big( b_1, \Gamma(f, a_1) \big) - \Gamma\big(f, \Gamma(a_1,b_1)\big) \Big) \\
=\ & a_0 b_0 H(f)(\nabla a_1,\nabla b_1) \\
=\ & H(f)(X,Y) \\
\end{split}
\end{equation*}
from which the result follows by linearity in $X$ and $Y$.
\end{proof}

We also check that the covariant derivative is well defined and satisfies the standard properties of the Levi-Civita connection.

\covariantwelldefined*
\begin{proof}
First notice that $\nabla X$ is indeed in $\Omega^1(M)^{\otimes2}$ because $h_i \in W^{2,\infty}(M)$ for all $i$, and so $H(h_i) \in \Omega^1(M)^{\otimes2}$.
We need to show that, if $\|X\| = 0$, then $\|\nabla X\| = 0$.
If $X = f\nabla h$, we can apply Proposition \ref{hessian product rule} to get
$$
d\big(f\Gamma(h,b)\big) = \Gamma(h,b) df + f H(h)(\nabla b, \cdot)  + f H(b)(\nabla h, \cdot)
$$
and so, by acting on $\nabla a$,
\begin{equation*}
\begin{split}
\Gamma \big(g(X,\nabla b), a\big) 
&= \Gamma(f,a)\Gamma(h,b) + f H(h)(\nabla b, \nabla a)  + f H(b)(\nabla h, \nabla a) \\
&= \nabla X(\nabla a, \nabla b) + H(b)(X, \nabla a), \\
\end{split}
\end{equation*}
which extends to all $X \in \mathfrak{X}^1(M)$ by linearity.
It then follows that
\begin{equation*}
\begin{split}
\inp{\nabla X}{f' da \otimes db}
&= \int f' \nabla X(\nabla a, \nabla b) d\mu \\
&= \int f' \Big[ \Gamma \big(g(X,\nabla b), a\big) - H(b)(X, \nabla a)\Big] d\mu.
\end{split}
\end{equation*}
If $\|X\| = 0$ then $g(X,X) = 0$ almost everywhere, so $g(X,\nabla b) = 0$ by Cauchy-Schwarz.
Likewise
\begin{equation*}
\begin{split}
H(b)(X, \nabla a)^2
&= g(H(b), X^\flat \otimes da)^2 \\
&\leq g(H(b),H(b)) g(X^\flat \otimes da, X^\flat \otimes da) \\
&= g(H(b),H(b)) g(X,X) \Gamma(a,a) \\
&= 0
\end{split}
\end{equation*}
so $\inp{\nabla X}{f' da \otimes db} = 0$.
The terms $f' da \otimes db$ span $\Omega^1(M)^{\otimes2}$, so we must have $\|\nabla X\| = 0$, and the map $X \mapsto \nabla X$ is well defined.
\end{proof}

\nablaaffineconnection*
\begin{proof}
The linearity claims follow directly from the definition. To prove the Leibniz rule, in the case $X = f' \nabla h$, we have
\begin{equation*}
\begin{split}
\nabla(fX)
&= d (ff') \otimes d h + ff' H(h_i) \\
&= \big(fd(f') + f'd(f)\big) \otimes d h + ff' H(h_i) \\
&= df \otimes X^\flat + f\nabla X,
\end{split}
\end{equation*}
from which the result follows by linearity.
\end{proof}

\nablapreservesmetric*
\begin{proof}
We need to check that $X(g(Y,Z)) = \nabla Y(X,Z) + \nabla Z(X,Y)$. If we set $Y = f_1 \nabla h_1$ and $Z = f_2 \nabla h_2$, we can evaluate
\begin{equation*}
\begin{split}
X\big( g(Y,Z) \big)
&= X\big(f_1f_2 \Gamma(h_1, h_2) \big) \\
&= X(f_1f_2) \Gamma(h_1, h_2) + f_1f_2 X \big( \Gamma(h_1,h_2) \big) \\
&= X(f_1)f_2 \Gamma(h_1, h_2) + f_1f_2 H(h_2)(X,\nabla h_2) \\
&\qquad + X(f_2)f_1 \Gamma(h_1, h_2) + f_1f_2 H(h_1)(X,\nabla h_1) \\
&= X(f_1)Z(h_1) + f_1 H(h_2)(X,Z) \\
&\qquad + X(f_2)Y(h_2) + f_2 H(h_1)(X,Y) \\
&= \nabla Y(X,Z) + \nabla Z(X,Y)
\end{split}
\end{equation*}
using the Leibniz rule for $X$ twice and the product rule for $\Gamma$ (Proposition \ref{hessian product rule}). The general case follows by linearity in $Y$ and $Z$.
\end{proof}

We also check that the Lie bracket is indeed the commutator of operators $[X,Y] = XY - YX$ on $\mathfrak{X}^1(M)$, and satisfies the Leibniz rule.

\liebracketcommutator*
\begin{proof}
We first consider $Y = f \nabla h$ and compute
\begin{equation*}
\begin{split}
X(Y(a))
&= X(f \Gamma(h,a)) \\
&= X(f) \Gamma(h,a) + f X(\Gamma(h,a)) \\
&= X(f) \Gamma(h,a) + f\big( H(h)(\nabla a, X) + H(a)(\nabla h, X) \big) \\
&= X(f) \Gamma(h,a) + f H(h)(\nabla a, X) + H(a)(Y, X)
\end{split}
\end{equation*}
using the product rule for $\Gamma$ (Proposition \ref{hessian product rule}).
Now notice that
\begin{equation*}
\begin{split}
\nabla_X Y (a)
&= g(\nabla_X Y, \nabla a) \\
&= \nabla Y (X, \nabla a) \\
&= X(f) \Gamma(h,a) + f H(h)(\nabla a, X)
\end{split}
\end{equation*}
so
$$
X(Y(a)) = \nabla_X Y (a) + H(a)(X, Y),
$$
which extends to arbitrary $Y$ by linearity.
We then see that
$$
X(Y(a)) - Y(X(a)) = [X,Y](a)
$$
by symmetry of $H(a)$.
\end{proof}

\liebracketleibniz*
\begin{proof}
We directly compute
\begin{equation*}
\begin{split}
[X,fY]
&= \nabla_{X}(fY) - \nabla_{fY}X \\
&= \nabla (fY) (X, \cdot)^\sharp - \nabla(X)(fY,\cdot)^\sharp \\
&= (df\otimes Y + f\nabla Y) (X, \cdot)^\sharp - f\nabla(X)(Y,\cdot)^\sharp \\
&= X(f)Y + f\nabla (Y) (X, \cdot)^\sharp - f\nabla(X)(Y,\cdot)^\sharp \\
&= X(f)Y + f[X,Y] \\
\end{split}
\end{equation*}
and likewise for $[fX,Y].$
\end{proof}

\subsection{Exterior derivative and codifferential}

We verify that the exterior derivative and codifferential are well defined on higher-order forms, and that the differential satisfies the Leibniz rule.

\extderivativeinvariantformula*
\begin{proof}
Let $T_\alpha:\mathfrak{X}(M)^{k+1} \rightarrow \A$ be given by
$$
T_\alpha(X_0,...,X_k) = \sum_{i=0}^k (-1)^i X_i(\alpha(...,\hat{X_i},...)) + \sum_{i<j} (-1)^{i+j} \alpha([X_i, X_j],...,\hat{X_i},...,\hat{X_j},...),
$$
so want to show that $T_\alpha = d\alpha$. 
We first show that, if $\beta = dh_1\wedge\ddd\wedge dh_k$, then $T_\beta = 0$.
Let $X_0,...,X_k \in \mathfrak{X}(M)$, and notice that $\beta(X_1,...,X_k) = \det\big(X_i(h_j)\big)$ so that
$$
\beta(...,\hat{X_i},...) = \sum_{\sigma \in S_k} \sign(\sigma) \prod_{l\neq i} X_l(h_{\sigma(a^i(l))})
$$
where
$$
a^i(l) =
\begin{cases}
l+1 & l < i \\
l & l > i
\end{cases}
$$
for $l = 0,...,i-1,i+1,...,k$.
We can compute
\begin{equation*}
\begin{split}
X_i(\beta(...,\hat{X_i},...))
&= X_i \big( \sum_{\sigma \in S_k} \sign(\sigma) \prod_{l\neq i} X_l(h_{\sigma(a^i(l))}) \big) \\
&= \sum_{\sigma \in S_k} \sign(\sigma) \sum_{j\neq i} X_i X_j (h_{\sigma(a^i(j))}) \prod_{l\neq i,j} X_l (h_{\sigma(a^i(l))}) \big), \\
\end{split}
\end{equation*}
with the Leibniz rule, and so
$$
\sum_{i=0}^k (-1)^i X_i(\beta(...,\hat{X_i},...)) = \sum_{i\neq j} \sum_{\sigma \in S_k} \sign(\sigma)(-1)^i X_i X_j (h_{\sigma(a^i(j))}) \prod_{l\neq i,j} X_l (h_{\sigma(a^i(l))}) \big).
$$

On the other hand, we can see that
$$
\beta([X_i, X_j],...,\hat{X_i},...,\hat{X_j},...) = \sum_{\sigma \in S_k} \sign(\sigma) [X_i, X_j](h_{\sigma(1)}) \prod_{l\neq i,j} X_l(h_{\sigma(b^{ij}(l))})
$$
where now
$$
b^{ij}(l) =
\begin{cases}
l+2 & l < \min(i,j) \\
l+1 & \min(i,j) < l < \max(i,j) \\
l & \max(i,j) < l \\
\end{cases}
$$
for $l \neq i,j$.
We then find
\begin{equation*}
\begin{split}
\sum_{i<j} (-1)^{i+j} \beta([X_i, X_j],...,\hat{X_i},...,\hat{X_j},...)
&= \sum_{i < j} (-1)^{i+j} \sum_{\sigma \in S_k} \sign(\sigma) [X_i, X_j](h_{\sigma(1)}) \prod_{l\neq i,j} X_l(h_{\sigma(b^{ij}(l))}) \\
&= \sum_{i < j} (-1)^{i+j} \sum_{\sigma \in S_k} \sign(\sigma) X_i X_j(h_{\sigma(1)}) \prod_{l\neq i,j} X_l(h_{\sigma(b^{ij}(l))}) \\
& \qquad - \sum_{i < j} (-1)^{i+j} \sum_{\sigma \in S_k} \sign(\sigma) X_j X_i(h_{\sigma(1)}) \prod_{l\neq i,j} X_l(h_{\sigma(b^{ij}(l))}) \\
&= \sum_{i < j} (-1)^{i+j} \sum_{\sigma \in S_k} \sign(\sigma) X_i X_j(h_{\sigma(1)}) \prod_{l\neq i,j} X_l(h_{\sigma(b^{ij}(l))}) \\
& \qquad - \sum_{i > j} (-1)^{i+j} \sum_{\sigma \in S_k} \sign(\sigma) X_i X_j (h_{\sigma(1)}) \prod_{l\neq i,j} X_l(h_{\sigma(b^{ij}(l))}). \\
\end{split}
\end{equation*}
We now want to reindex the sum over $S_k$, and do so separately for the cases $i<j$ and $i>j$.
If $i<j$, we define $\tau^{ij} \in S_k$ as
$$
\tau^{ij} = (j,1,2,...,j-1,j+1,...,k)
$$
which has sign $(-1)^j$.
If $i>j$, we define $\tau^{ij} \in S_k$ as
$$
\tau^{ij} = (j+1,1,2,...,j,j+2,...,k)
$$
which has sign $(-1)^{j+1}$.
An elementary but tedious calculation shows that, in both cases, $\tau^{ij}(1) = a^i(j)$ and $\tau^{ij}(b^{ij}(l)) = a^i(l)$.
$\tau^{ij}$ has a regular right action on $S_k$, so we can replace $\sigma$ with $\sigma\tau^{ij}$ in the summand, and obtain
\begin{equation*}
\begin{split}
\sum_{i<j} (-1)^{i+j} \beta([X_i, X_j],...,\hat{X_i},...,\hat{X_j},...)
&= \sum_{i < j} (-1)^{i+j} \sum_{\sigma \in S_k} \sign(\sigma\tau^{ij}) X_i X_j(h_{\sigma\tau^{ij}(1)}) \prod_{l\neq i,j} X_l(h_{\sigma\tau^{ij}(b^{ij}(l))}) \\
& \qquad - \sum_{i > j} (-1)^{i+j} \sum_{\sigma \in S_k} \sign(\sigma\tau^{ij}) X_i X_j(h_{\sigma\tau^{ij}(1)}) \prod_{l\neq i,j} X_l(h_{\sigma\tau^{ij}(b^{ij}(l))}) \\
&= \sum_{i < j} (-1)^{i+j} \sum_{\sigma \in S_k} \sign(\sigma)(-1)^{j + 1} X_i X_j(h_{\sigma(a^i(j))}) \prod_{l\neq i,j} X_l(h_{\sigma(a^i(l))}) \\
& \qquad - \sum_{i > j} (-1)^{i+j} \sum_{\sigma \in S_k} \sign(\sigma)(-1)^{j} X_i X_j(h_{\sigma(a^i(j))}) \prod_{l\neq i,j} X_l(h_{\sigma(a^i(l))}) \\
&= - \sum_{i \neq j} \sum_{\sigma \in S_k} \sign(\sigma)(-1)^{i} X_i X_j(h_{\sigma(a^i(j))}) \prod_{l\neq i,j} X_l(h_{\sigma(a^i(l))}) \\
&= - \sum_{i=0}^k (-1)^i X_i(\beta(...,\hat{X_i},...))
\end{split}
\end{equation*}
so $T_\beta = 0$.

We now let $\alpha = f\beta$ where $\beta = dh_1\wedge\ddd\wedge dh_k$, and find
\begin{equation*}
\begin{split}
T_\alpha(X_0,...,X_k) 
&= \sum_{i=0}^k (-1)^i X_i(f\beta(...,\hat{X_i},...)) + \sum_{i<j} (-1)^{i+j} f\beta([X_i, X_j],...,\hat{X_i},...,\hat{X_j},...) \\
&= \sum_{i=0}^k (-1)^i X_i(f)\beta(...,\hat{X_i},...) + f T_\beta(X_0,...,X_k) \\
&= df \wedge dh_1\wedge\ddd\wedge dh_k (X_0,...,X_k),
\end{split}
\end{equation*}
where the last step follows by expanding the determinant along the first column.
By linearity we see that $
T_\alpha(X_0,...,X_k) = d\alpha(X_0,...,X_k)$ for all $\alpha \in \Omega^k(M)$.

Now suppose that $\|\alpha\| = 0$, so $g(\alpha,\alpha)=0$ almost everywhere.
Then
\begin{equation*}
\begin{split}
\alpha(X_0,...,X_k)^2
&= g(\alpha, X_0^\flat \wedge \dots \wedge X_k^\flat)^2 \\
&\leq g(\alpha,\alpha) g(X_0^\flat \wedge \dots \wedge X_k^\flat, X_0^\flat \wedge \dots \wedge X_k^\flat) \\
&= 0
\end{split}
\end{equation*}
almost everywhere.
$T_\alpha(X_0,...,X_k) = 0$ where $\alpha(X_0,...,X_k) = 0$ so
\begin{equation*}
\begin{split}
\inp{d\alpha}{fdh_0\wedge\dots\wedge dh_k}
&= \int fd\alpha(\nabla h_0,...,\nabla h_k) d\mu \\
&= \int fT_\alpha(\nabla h_0,...,\nabla h_k) d\mu  \\
&= 0.
\end{split}
\end{equation*}
It follows by linearity that $\inp{d\alpha}{\beta} = 0$ for all $\beta\in\Omega^{k+1}(M)$, so $\|d\alpha\| = 0$.
This means that the map $\A \otimes \bigwedge^k(\A) \rightarrow \A \otimes \bigwedge^{k+1}(\A)$ given by
$$
fdh_1\wedge\dots\wedge dh_k \mapsto 1 \otimes df \wedge dh_1\wedge\dots\wedge dh_k
$$
descends to the quotient (by zero-norm tensors), and so $\alpha \mapsto d\alpha$ is a well defined map $\Omega^k(M)\rightarrow \Omega^{k+1}(M)$.
\end{proof}

\leibnizforextderivative*
\begin{proof}
We dealt with the case where $f, h \in \Omega^0(M) = \A$ in Proposition \ref{calculus_rules_d0}. 
If we now take $\alpha = f_0 df_1 \wedge \ddd \wedge df_k \in \Omega^k(M)$ and $\beta = h_0 dh_1 \wedge \ddd \wedge dh_l \in \Omega^l(M)$, we can calculate
\begin{equation*}
    \begin{split}
        d(\alpha \wedge \beta)
        &= d(f_0h_0) \wedge df_1 \wedge \ddd \wedge df_k \wedge dh_1 \wedge \ddd \wedge dh_l \\
        &= \big(h_0df_0 + f_0dh_0\big) \wedge df_1 \wedge \ddd \wedge df_k \wedge dh_1 \wedge \ddd \wedge dh_l \\
        &= h_0 df_0 \wedge df_1 \wedge \ddd \wedge df_k \wedge dh_1 \wedge \ddd \wedge dh_l \\
        & \quad + f_0 dh_0 \wedge df_1 \wedge \ddd \wedge df_k \wedge dh_1 \wedge \ddd \wedge dh_l \\
        &= \big( df_0 \wedge df_1 \wedge \ddd \wedge df_k \big) \wedge \big( h_0 dh_1 \wedge \ddd \wedge dh_l \big) \\
        & \quad + (-1)^k \big(f_0 df_1 \wedge \ddd \wedge df_k \big) \wedge \big( dh_0 \wedge dh_1 \wedge \ddd \wedge dh_l \big) \\
        &= d\alpha \wedge \beta + (-1)^k \alpha \wedge d\beta.
    \end{split}
\end{equation*}
The case for general $\alpha \in \Omega^k(\mu)$ and $\beta \in \Omega^l(\mu)$ follows by linearity of $\wedge$ and $d$.
\end{proof}

\codiffwelldefined*
\begin{proof}
For brevity, we will write
$$
\hat{dh_i} = dh_1 \wedge \dots \wedge \hat{dh_i} \wedge \dots \wedge dh_k
$$
and
$$
\hat{dh_i} \wedge \hat{dh_j} = dh_1 \wedge \dots \wedge \hat{dh_i} \wedge \dots \wedge \hat{dh_j} \wedge \dots \wedge dh_k.
$$
We consider a general form $a\beta \in \Omega^{k-1}(M)$, where $a \in \A$ and $\beta = dh_1' \wedge \dots \wedge dh_{k-1}'$.
We find that
\begin{equation*}
\begin{split}
\inp{\big( \Gamma(f, h_i) - f L(h_i) \big) \hat{dh_i}}{a \beta}
&= \int \Big[ \Gamma(f, h_i) g(\hat{dh_i}, a \beta) - f L(h_i) g(\hat{dh_i}, a \beta) \Big] d\mu \\
&= \int \Big[ \Gamma(f, h_i) a g(\hat{dh_i}, \beta) - \Gamma \big( h_i, fa g(\hat{dh_i}, \beta) \big) \Big] d\mu \\
&= - \int f \Gamma \big( h_i, ag(\hat{dh_i}, \beta) \big) d\mu \\
&= - \int fa \Gamma \big( h_i, g(\hat{dh_i}, \beta) \big) d\mu - \int f \Gamma (h_i, a) g(\hat{dh_i}, \beta) d\mu.
\end{split}
\end{equation*}
Notice that
\begin{equation*}
\begin{split}
- \int f \Big[ \sum_{i=1}^k (-1)^i \Gamma (h_i, a) g(\hat{dh_i}, \beta) \Big] d\mu 
& = \int f g(dh_1\wedge\dots\wedge dh_k, da \wedge \beta) d\mu \\
&= \inp{f dh_1\wedge\dots\wedge dh_k}{d(a\beta)}
\end{split}
\end{equation*}
by expanding the first column of the determinant, so
\begin{equation*}
\begin{split}
\inp{\partial(fdh_1\wedge\dots\wedge dh_k)}{a\beta}
&= \sum_{i=1}^k (-1)^i \inp{\big( \Gamma(f, h_i) - f L(h_i) \big) \hat{dh_i}}{a \beta} \\
& \qquad + \sum_{i<j} (-1)^{i+j} \inp{f [\nabla h_i, \nabla h_j]^\flat \wedge \hat{dh_i} \wedge \hat{dh_j}}{a\beta} \\
&= \inp{f dh_1\wedge\dots\wedge dh_k}{d(a\beta)} \\
& \qquad - \int fa \Big[ 
\sum_{i=1}^k (-1)^i \Gamma \big( h_i, g(\hat{dh_i},\beta) \big) \\
& \qquad \qquad - \sum_{i<j} (-1)^{i+j} g \big( [\nabla h_i, \nabla h_j]^\flat \wedge \hat{dh_i} \wedge \hat{dh_j}, \beta \big)
\Big] d\mu
\end{split}
\end{equation*}
We can apply Proposition \ref{ext derivative invariant formula} to find that
\begin{equation*}
\begin{split}
& \sum_{i=1}^k (-1)^i \Gamma \big( h_i, g(\hat{dh_i},\beta) \big) 
- \sum_{i<j} (-1)^{i+j} g \big( [\nabla h_i, \nabla h_j]^\flat \wedge \hat{dh_i} \wedge \hat{dh_j}, \beta \big) \\
=&\
\sum_{i=1}^k (-1)^i \nabla h_i(\beta(...,\hat{\nabla h_i},...)) - \sum_{i<j} (-1)^{i+j} \beta([\nabla h_i, \nabla h_j],...,\hat{\nabla h_i},...,\hat{\nabla h_j},...) \\
=&\
-d\beta (\nabla h_1,...,\nabla h_k) = 0
\end{split}
\end{equation*}
because $\beta$ is exact (hence closed), which proves the claim.
Now if $\|\alpha\| = 0$ then 
$$
\inp{\partial \alpha}{\beta} = \inp{\alpha}{d \beta} = 0
$$
for all $\beta \in \Omega^{k-1}(M)$, giving $\|\partial \alpha\| = 0$.
So the map $\A \otimes \bigwedge^k(\A) \rightarrow \A \otimes \bigwedge^{k-1}(\A)$ given by $\alpha \mapsto \partial\alpha$ descends to the quotient (by zero-norm tensors), meaning $\alpha \mapsto \partial\alpha$ is a well defined map $\Omega^k(M)\rightarrow \Omega^{k-1}(M)$.
\end{proof}

\newpage

\section{Appendix B: Implementation Details}\label{appendix implementation}

We implement the diffusion maps Laplacian exactly as in \cite{COIFMAN20065}, and outlined in Algorithm \ref{alg:DM}.
All the basic objects in diffusion geometry can be expressed with tensor operations on the eigenvalues and eigenfunctions of $\Delta$. These are easily calculated with the Einstein summation functions in NumPy, and we write the following derivations in that form.

All the forms here are represented in the eigenfunction frame discussed in Section \ref{section_estimation}, so functions are in the basis $\p{i}$, and 1-forms in the frame $\p{i}d\p{j}$ etc.
Where needed, we will project forms into an orthonormal basis for the positive definite subspace of the inner product.
However, it is more convenient here to work in the original eigenfunction frame, and so we will always include forms back in the frame after working in the orthonormal basis.

\subsection{Structure constants for the multiplicative algebra}
\label{structure constants appendix}
$$
c_{ijk} = \inp{\p{i} \p{j}}{\p{k}} = \frac{1}{n}\sum_{s = 1}^n \p{i}(s) \p{j}(s) \p{k}(s) D(s)
$$
Notice that, since $\p{0}$ is constant, we have
$$
c_{ij0} = \p{0} \inp{\p{i}}{\p{j}} = \p{0}\delta_{ij}.
$$
We will let $c_{i_1 \dots i_k}$ denote the corresponding $k$-fold products.

\subsection{Carré du champ}
\label{cdc appendix}
$$
\Gamma_{ijs} = \inp{\Gamma(\p{i}, \p{j})}{\p{s}} = \frac{1}{2}(\lambda_i + \lambda_j -  \lambda_s) c_{ijs}
$$


\subsection{Metric on k-forms}

To illustrate the general formula for $g^k$, we derive it here for $k=2$. Let us write the 2-forms
$$
\alpha_I = \p{i_0} d\p{i_1} \wedge d\p{i_2}
\qquad
\alpha_J = \p{j_0} d\p{j_1} \wedge d\p{j_2}
$$
where $I = (i_0, i_1, i_2)$ and $J = (j_0, j_1, j_2)$. Then, using Einstein summation notation,
\begin{equation*}
\begin{split}
g(\alpha_I,\alpha_J)
&= \p{i_0}\p{j_0} \det 
\begin{bmatrix}
\Gamma(\p{i_1}, \p{j_1}) & \Gamma(\p{i_1}, \p{j_2})\\
\Gamma(\p{i_2}, \p{j_1}) & \Gamma(\p{i_2}, \p{j_2})
\end{bmatrix}
\\
&= \p{i_0}\p{j_0} \det 
\begin{bmatrix}
\Gamma_{i_1 j_1 s}\p{s} & \Gamma_{i_1 j_2 t}\p{t}\\
\Gamma_{i_2 j_1 s}\p{s} & \Gamma_{i_2 j_2 t}\p{t}
\end{bmatrix}
\\
&= \p{i_0}\p{j_0}\p{s}\p{t} \det 
\begin{bmatrix}
\Gamma_{i_1 j_1 s} & \Gamma_{i_1 j_2 t}\\
\Gamma_{i_2 j_1 s} & \Gamma_{i_2 j_2 t}
\end{bmatrix}
\\
&= c_{i_0j_0st\ell} \det 
\begin{bmatrix}
\Gamma_{i_1 j_1 s} & \Gamma_{i_1 j_2 t}\\
\Gamma_{i_2 j_1 s} & \Gamma_{i_2 j_2 t}
\end{bmatrix}
 \p{\ell}
\\
&= c_{i_0j_0u}c_{stu\ell} \det 
\begin{bmatrix}
\Gamma_{i_1 j_1 s} & \Gamma_{i_1 j_2 t}\\
\Gamma_{i_2 j_1 s} & \Gamma_{i_2 j_2 t}
\end{bmatrix}
 \p{\ell}
\end{split}
\end{equation*}
where we are allowed to reuse the dummy indices $s,t$ because two entries in the same column are not multiplied in the determinant.
The general formula is derived analogously. Let $I = (i_0, ..., i_k)$ and $J = (j_0, ..., j_k)$, and take dummy indices $s_1,...,s_k$. Then
$$
g(\alpha_I,\alpha_J) = c_{i_0j_0s_1\ddd s_k \ell} \det \big((\Gamma_{i_n j_m s_m})_{n,m}\big) \p{\ell}.
$$
In the special case $k=1$, we find the formula
$$
g(\p{i}d\p{j}, \p{k}d\p{l}) = c_{ikst} \Gamma_{jls} \p{t},
$$
and when $k=0$ we retrieve $g(\p{i},\p{j}) = \p{i}\p{j} = c_{ijt} \p{t}$.



\subsection{Inner product on k-forms (Gram matrix)} \label{gram matrix forms appendix}

To illustrate the general formula for $G^k$, we derive it here for $k=2$. Let us write the 2-forms
$$
\alpha_I = \p{i_0} d\p{i_1} \wedge d\p{i_2}
\qquad
\alpha_J = \p{j_0} d\p{j_1} \wedge d\p{j_2}
$$
where $I = (i_0, i_1, i_2)$ and $J = (j_0, j_1, j_2)$. Then, using Einstein summation notation,
\begin{equation*}
\begin{split}
\inp{\alpha_I}{\alpha_J}
&= \int \p{i_0}\p{j_0} \det 
\begin{bmatrix}
\Gamma(\p{i_1}, \p{j_1}) & \Gamma(\p{i_1}, \p{j_2})\\
\Gamma(\p{i_2}, \p{j_1}) & \Gamma(\p{i_2}, \p{j_2})
\end{bmatrix}
d\mu \\
&= \int \p{i_0}\p{j_0} \det 
\begin{bmatrix}
\Gamma_{i_1 j_1 s}\p{s} & \Gamma_{i_1 j_2 t}\p{t}\\
\Gamma_{i_2 j_1 s}\p{s} & \Gamma_{i_2 j_2 t}\p{t}
\end{bmatrix}
d\mu \\
&= \Big( \int \p{i_0}\p{j_0}\p{s}\p{t} d\mu \Big) \det 
\begin{bmatrix}
\Gamma_{i_1 j_1 s} & \Gamma_{i_1 j_2 t}\\
\Gamma_{i_2 j_1 s} & \Gamma_{i_2 j_2 t}
\end{bmatrix}
\\
&= c_{i_0j_0st} \det 
\begin{bmatrix}
\Gamma_{i_1 j_1 s} & \Gamma_{i_1 j_2 t}\\
\Gamma_{i_2 j_1 s} & \Gamma_{i_2 j_2 t}
\end{bmatrix}
\end{split}
\end{equation*}
where we are allowed to reuse the dummy indices $s,t$ because two entries in the same column are not multiplied in the determinant. Recall here that
$$
\Gamma_{ijs} = \frac{1}{2}(\lambda_i + \lambda_j -  \lambda_s) c_{ijs}.
$$
The general formula is derived analogously. Let $I = (i_0, ..., i_k)$ and $J = (j_0, ..., j_k)$, and take dummy indices $s_1,...,s_k$. Then
$$
\inp{\alpha_I}{\alpha_J} = c_{i_0j_0s_1\ddd s_k} \det \big((\Gamma_{i_n j_m s_m})_{n,m}\big).
$$
In the special case $k=1$, we obtain the formula
$$
G^1_{ijkl} = \inp{\p{i}d\p{j}}{\p{k}d\p{l}} = c_{iks} \Gamma_{jls},
$$
and when $k=0$ we retrieve $\inp{\p{i}}{\p{j}} = c_{ij} = \delta_{ij}$.

\subsection{Pseudo-inverse of the Gram matrix.}

A common computational problem here is the recovery of a form $v$ from its \q{weak} representation $Gv$. 
For example, we might have some expression for $\inp{w}{v} = w^TGv$ for all $w$ and would like to know $v$.
The Gram matrix we compute above is generally degenerate, and so is not invertible: the solution is to take the Moore-Penrose pseudo-inverse of $G$, where we
\begin{enumerate}
    \item project $v$ onto the positive definite subspace of $G$,
    \item invert $G$ on this subspace and compute $G\inv v$, and
    \item include $G\inv v$ in the total space.
\end{enumerate}
So while we cannot truly invert $G$ in the frame, we can invert it up to a form of zero norm.

We can obtain this representation by diagonalising $G$, and restricting to the eigenspaces with positive eigenvalue (in practice we take eigenvalues above some variable threshold, to control numerical instability).
We denote the pseudoinverse of $G$ by $G^+$ and can express it as
$$
G^+ = P^T (PGP^T)\inv P
$$
where $P$ is the orthonormal projection matrix onto the positive definite subspace of $G$. 
$P$ was obtained by diagonalising $G$, so the matrix $(PGP^T)$ is diagonal and easily inverted. 
We can verify that, if $w = P^Tv$ is in the positive definite subspace of $G$, then
$$
G^+Gw = P^T (PGP^T)\inv PGP^Tv = w
$$
and so we can recover $w$ from $Gw$ with $G^+$. Likewise
\begin{equation*}
\begin{split}
GG^+w &= GP^T (PGP^T)^{-1} PP^Tv \\
&= P^TP GP^T (PGP^T)^{-1} v \\
&= w. \\
\end{split}
\end{equation*}
using the facts that $PP^T = Id$ and $G = P^T D P = P^TPP^T D P = P^TPG$.

\subsection{Dual vector fields}
$$
\inp{\p{k}}{(\p{i}d\p{j})^\sharp \p{l}} = \int g(\p{i}d\p{j}, d\p{l}) \p{k} d\mu = G^1_{ijkl}
$$
So, if $v$ is a 1-form, $G^1v$ gives the coefficients of $v^\sharp$ as a linear operator. 
By reshaping $G^1v$ into a matrix we can compute its action on functions. 
In the following, we will write $G^1v$ to denote this matrix and write $\circ$ to denote matrix multiplication (i.e. composition of vector fields).

\subsection{Dual 1-forms}
By duality with the above, if $X$ is a vector field represented by $X_{ij} = \inp{\p{i}}{X(\p{j})}$, then $X = G^1 X^\flat$, and so we can recover $X^\flat = (G^1)^+X$.

\subsection{Wedge product}

Let $\alpha_I \in \Omega^k(M)$ and $\alpha_J \in \Omega^l(M)$ be given by
$$
\alpha_I = \p{i_0} d\p{i_1} \wedge\dots\wedge d\p{i_k}
\qquad
\alpha_J = \p{j_0} d\p{j_1} \wedge\dots\wedge d\p{j_l}
$$
where $I = (i_0, ..., i_k)$ and $J = (j_0, j_1, j_2)$. 
Then, using Einstein summation notation,
\begin{equation*}
\begin{split}
\alpha_I \wedge \alpha_J
&= (\p{i_0}\p{j_0}) d\p{i_1} \wedge\dots\wedge d\p{i_k} \wedge d\p{j_1} \wedge\dots\wedge d\p{j_l} \\
&= c_{i_0j_0s}\p{s} d\p{i_1} \wedge\dots\wedge d\p{i_k} \wedge d\p{j_1} \wedge\dots\wedge d\p{j_l}. \\
\end{split}
\end{equation*}


\subsection{Exterior derivative}
We can represent the exterior derivative weakly in the matrix
$$
(\tilde d_k)_{ij} = \inp{d_k(v_j)}{w_i}
$$
and recover the strong formulation as the matrix
$$
d_k = (G^k)^+(\tilde d_k).
$$
To illustrate the general formula, we derive it here for $k=1$. Let us write the 2 and 1-forms
$$
\alpha_I = \p{i_0} d\p{i_1} \wedge d\p{i_2}
\qquad
\alpha_J = \p{j_0} d\p{j_1}
$$
where $I = (i_0, i_1, i_2)$ and $J = (j_0, j_1)$. Then, using Einstein summation notation,
\begin{equation*}
\begin{split}
\inp{\alpha_I}{d \alpha_J}
&= \int \p{i_0} \det 
\begin{bmatrix}
\Gamma(\p{i_1}, \p{j_0}) & \Gamma(\p{i_1}, \p{j_1})\\
\Gamma(\p{i_2}, \p{j_0}) & \Gamma(\p{i_2}, \p{j_1})
\end{bmatrix}
d\mu \\
&= \int \p{i_0} \det 
\begin{bmatrix}
\Gamma_{i_1 j_0 s}\p{s} & \Gamma_{i_1 j_1 t}\p{t}\\
\Gamma_{i_2 j_0 s}\p{s} & \Gamma_{i_2 j_1 t}\p{t}
\end{bmatrix}
d\mu \\
&= \Big( \int \p{i_0}\p{s}\p{t} d\mu \Big) \det 
\begin{bmatrix}
\Gamma_{i_1 j_0 s} & \Gamma_{i_1 j_1 t}\\
\Gamma_{i_2 j_0 s} & \Gamma_{i_2 j_1 t}
\end{bmatrix}
\\
&= c_{i_0st} \det 
\begin{bmatrix}
\Gamma_{i_1 j_0 s} & \Gamma_{i_1 j_1 t}\\
\Gamma_{i_2 j_0 s} & \Gamma_{i_2 j_1 t}
\end{bmatrix}
\end{split}
\end{equation*}
where we are allowed to reuse the dummy indices $s,t$ because two entries in the same column are not multiplied in the determinant. The general formula is derived analogously. Let $I = (i_0, ..., i_{k+1})$ and $J = (j_0, ..., j_k)$, and take dummy indices $s_1,...,s_k$. Then
$$
\inp{\alpha_I}{d\alpha_J} = c_{i_0s_1\ddd s_k} \det \big((\Gamma_{i_{n+1} j_m s_m})_{n,m}\big).
$$
In the special case $k=0$, we obtain the formula
$$
\inp{\p{i_0}d\p{i_1}}{d\p{j}} =\Gamma_{i_1ji_0}.
$$
We can also recover the weak and strong forms of $d_k^*$ from $\tilde d_k$ since
$$
(\tilde d_k)_{ij} = \inp{v_j}{d_k^*(w_i)} = (\tilde d_k^*)_{ji}
$$
and so $\tilde d_k^* = \tilde d_k^T$ and $d_k^* = (G^{k-1})^+ \tilde d_k^T$.

\subsection{Hodge Laplacian}\label{appendix hodge laplacian}

We will represent the Hodge Laplacian $\Delta_k$ weakly as
$$
(\Tilde{\Delta_k})_{ij} = \inp{w_i}{\Delta_k(v_j)} = \inp{d_k(w_i)}{d_k(v_j)} + \inp{w_i}{d_{k-1}d^*_{k-1}(v_j)}.
$$
The \q{down} term $\inp{w_i}{d_{k-1}d^*_{k-1}(v_j)}$ is the weak formulation of the operator $d_{k-1}d^*_{k-1}$, which we can evaluate using $\tilde d_{k-1}$ as
$$
G^k d_{k-1} d_{k-1}^* = \tilde d_{k-1} (G^{k-1})^+ \tilde d_{k-1}^T.
$$
Although a similar process would work for the \q{up} term, this would involve the pseudo-inverse of $G^{k+1}$, which is more computationally complex. We can instead use the \q{kernel trick} and directly evaluate $\inp{d_k(w_i)}{d_k(v_j)}$ using the metric to get
$$
\inp{d_k(w_i)}{d_k(v_j)} = \int g(d_k(v_j), d_k(v_j)) d\mu.
$$
We can derive an explicit formula for this \q{up} energy in terms of the eigenfunction frame, which we illustrate here with the case $k=2$. Let us write the 2-forms
$$
\alpha_I = \p{i_0} d\p{i_1} \wedge d\p{i_2}
\qquad
\alpha_J = \p{j_0} d\p{j_1} \wedge d\p{j_2}
$$
where $I = (i_0, i_1, i_2)$ and $J = (j_0, j_1, j_2)$. Then, using Einstein summation notation,
\begin{equation*}
\begin{split}
\inp{d \alpha_I}{d\alpha_J}
&= \int \det 
\begin{bmatrix}
\Gamma(\p{i_0}, \p{j_0}) & \Gamma(\p{i_0}, \p{j_1}) & \Gamma(\p{i_0}, \p{j_2})\\
\Gamma(\p{i_1}, \p{j_0}) & \Gamma(\p{i_1}, \p{j_1}) & \Gamma(\p{i_1}, \p{j_2})\\
\Gamma(\p{i_2}, \p{j_0}) & \Gamma(\p{i_2}, \p{j_1}) & \Gamma(\p{i_2}, \p{j_2})\\
\end{bmatrix}
d\mu \\
&= \int \det 
\begin{bmatrix}
\Gamma_{i_0 j_0 s}\p{s} & \Gamma_{i_0 j_1 t}\p{t} & \Gamma_{i_0 j_2 u}\p{u}\\
\Gamma_{i_1 j_0 s}\p{s} & \Gamma_{i_1 j_1 t}\p{t} & \Gamma_{i_1 j_2 u}\p{u}\\
\Gamma_{i_2 j_0 s}\p{s} & \Gamma_{i_2 j_1 t}\p{t} & \Gamma_{i_2 j_2 u}\p{u}\\
\end{bmatrix}
d\mu \\
&= \Big( \int \p{s}\p{t}\p{u} d\mu \Big) \det 
\begin{bmatrix}
\Gamma_{i_0 j_0 s} & \Gamma_{i_0 j_1 t} & \Gamma_{i_0 j_2 u}\\
\Gamma_{i_1 j_0 s} & \Gamma_{i_1 j_1 t} & \Gamma_{i_1 j_2 u}\\
\Gamma_{i_2 j_0 s} & \Gamma_{i_2 j_1 t} & \Gamma_{i_2 j_2 u}\\
\end{bmatrix}
\\
&= c_{stu} \det 
\begin{bmatrix}
\Gamma_{i_0 j_0 s} & \Gamma_{i_0 j_1 t} & \Gamma_{i_0 j_2 u}\\
\Gamma_{i_1 j_0 s} & \Gamma_{i_1 j_1 t} & \Gamma_{i_1 j_2 u}\\
\Gamma_{i_2 j_0 s} & \Gamma_{i_2 j_1 t} & \Gamma_{i_2 j_2 u}\\
\end{bmatrix}
\end{split}
\end{equation*}
where we are allowed to reuse the dummy indices $s,t,u$ because two entries in the same column are not multiplied in the determinant. Recall here that
$$
\Gamma_{ijs} = \frac{1}{2}(\lambda_i + \lambda_j -  \lambda_s) c_{ijs}.
$$
The general formula is derived analogously. Let $I = (i_0, ..., i_k)$ and $J = (j_0, ..., j_k)$, and take dummy indices $S = (s_0,...,s_k)$. Then
$$
\inp{d \alpha_I}{d\alpha_J} = c_S \det \big((\Gamma_{i_n j_m s_m})_{n,m}\big).
$$
In the special case $k=1$, we have $c_{pq} = \delta_{pq}$ and so
$$
\inp{d\p{i_0}\wedge d\p{i_1}}{d\p{j_0}\wedge d\p{j_1}} =
\det 
\begin{bmatrix}
\Gamma_{i_0 j_0 s} & \Gamma_{i_0 j_1 s}\\
\Gamma_{i_1 j_0 s} & \Gamma_{i_1 j_1 s}\\
\end{bmatrix}
$$
In the very special case $k=0$, we have
$$
c_s = \int \p{s} d\mu = \frac{1}{\p{0}}\int \p{0}\p{s} d\mu = \frac{1}{\p{0}}\delta_{s0}
$$
and so
$$
\inp{d\p{i}}{d\p{j}} = \frac{1}{\p{0}} \Gamma_{i j 0}.
$$
Now notice that
$$
c_{ij0} = \int \p{i}\p{j}\p{0} d\mu = \p{0} \delta_{ij}
$$
so
$$
\Gamma_{ij0} = \frac{1}{2}(\lambda_i + \lambda_j -  \lambda_0) c_{ij0} = \p{0} \lambda_i \delta_{ij}
$$
giving $\inp{d\p{i}}{d\p{j}} = \lambda_i \delta_{ij}$. In particular, we recover that the Laplacian $\Delta$ is represented by a diagonal matrix with entries $\lambda_i$.

\subsection{Hodge Decomposition}
On a manifold, a 1-form $\alpha \in \Omega^1(M)$ can be decomposed uniquely as
$$
\alpha = h + df + d^*\beta
$$
where $h \in \ker \Delta$, $f\in\A$, and $\beta \in \Omega^2(M)$. In our case, the Hodge Laplacian on 1-forms does not \textit{actually} have a kernel, due to approximation error, and so the \q{harmonic} bit $h$ shows up in the $d^*\beta$ part. Although we have not proved this result for general diffusion Markov triples, the Hodge decomposition theorem can be shown (fairly straightforwardly) for finite-dimensional spaces, with which we are working here.

To compute the \q{gradient} part $df$, we can recover the function $f$ as
$$
f = \Delta^+ d^* \alpha,
$$
where $\Delta^+$ is the pseudo-inverse of $\Delta$. In our eigenfunction basis, this is given by
\[   
(\Delta^+f)_i = 
     \begin{cases}
       0 & \lambda_i = 0\\
       f_i / \lambda_i & \lambda_i > 0\\
     \end{cases}
\]
We can therefore write the projection onto the \q{gradient-only} part of $\Omega^1(M)$ as $d \Delta^+ d^*$.

\subsection{Lie bracket}
\begin{equation*}
\begin{split}
[v, w] 
&= \big( v^\sharp \circ w^\sharp - w^\sharp \circ v^\sharp \big)^\flat \\
&= (G^1)^+\big( G^1v \circ G^1 w - G^1w \circ G^1 v \big)
\end{split}
\end{equation*}

\subsection{Covariant derivative}
We use the Koszul formula
\begin{equation*}
\begin{split}
\inp{\nabla_X Y}{Z} = \frac{1}{2}\Big(
        & \frac{1}{\p{0}} \inp{X(g(Y,Z))}{\p{0}} + \frac{1}{\p{0}} \inp{Y(g(Z,X))}{\p{0}} - \frac{1}{\p{0}} \inp{Z(g(X,Y))}{\p{0}} \\
        &+ \inp{[X,Y]}{Z} - \inp{[Y,Z]}{X} + g\inp{[Z,X]}{Y} \Big).
\end{split}
\end{equation*}

\bibliographystyle{plain}
\bibliography{main}

\begin{thebibliography}{10}

\bibitem{ambrosio2018calculus}
Luigi Ambrosio.
\newblock Calculus, heat flow and curvature-dimension bounds in metric measure spaces.
\newblock In {\em Proceedings of the International Congress of Mathematicians: Rio de Janeiro 2018}, pages 301--340. World Scientific, 2018.

\bibitem{ambrosio2014calculus}
Luigi Ambrosio, Nicola Gigli, and Giuseppe Savar{\'e}.
\newblock Calculus and heat flow in metric measure spaces and applications to spaces with ricci bounds from below.
\newblock {\em Inventiones mathematicae}, 195(2):289--391, 2014.

\bibitem{ambrosio2014metric}
Luigi Ambrosio, Nicola Gigli, and Giuseppe Savar{\'e}.
\newblock Metric measure spaces with riemannian ricci curvature bounded from below.
\newblock 2014.

\bibitem{ambrosio2015bakry}
Luigi Ambrosio, Nicola Gigli, and Giuseppe Savar{\'e}.
\newblock Bakry--{\'e}mery curvature-dimension condition and riemannian ricci curvature bounds.
\newblock 2015.

\bibitem{arnold2001convex}
Anton Arnold, Peter Markowich, Giuseppe Toscani, and Andreas Unterreiter.
\newblock On convex sobolev inequalities and the rate of convergence to equilibrium for fokker-planck type equations.
\newblock 2001.

\bibitem{arnold2010finite}
Douglas Arnold, Richard Falk, and Ragnar Winther.
\newblock Finite element exterior calculus: from hodge theory to numerical stability.
\newblock {\em Bulletin of the American mathematical society}, 47(2):281--354, 2010.

\bibitem{arnold2006finite}
Douglas~N Arnold, Richard~S Falk, and Ragnar Winther.
\newblock Finite element exterior calculus, homological techniques, and applications.
\newblock {\em Acta numerica}, 15:1--155, 2006.

\bibitem{bakry1985seminaire}
Dominique Bakry and M~{\'E}mery.
\newblock S{\'e}minaire de probabilit{\'e}s xix 1983/84, 1985.

\bibitem{bakry2014analysis}
Dominique Bakry, Ivan Gentil, Michel Ledoux, et~al.
\newblock {\em Analysis and geometry of Markov diffusion operators}, volume 103.
\newblock Springer, 2014.

\bibitem{bartholdi2012hodge}
Laurent Bartholdi, Thomas Schick, Nat Smale, and Steve Smale.
\newblock Hodge theory on metric spaces.
\newblock {\em Foundations of Computational Mathematics}, 12:1--48, 2012.

\bibitem{bauer2021ripser}
Ulrich Bauer.
\newblock Ripser: efficient computation of vietoris--rips persistence barcodes.
\newblock {\em Journal of Applied and Computational Topology}, 5(3):391--423, 2021.

\bibitem{berry2020spectral}
Tyrus Berry and Dimitrios Giannakis.
\newblock Spectral exterior calculus.
\newblock {\em Communications on Pure and Applied Mathematics}, 73(4):689--770, 2020.

\bibitem{blumberg2022stability}
Andrew~J Blumberg and Michael Lesnick.
\newblock Stability of 2-parameter persistent homology.
\newblock {\em Foundations of Computational Mathematics}, pages 1--43, 2022.

\bibitem{bobrowski2017topological}
Omer Bobrowski, Sayan Mukherjee, and Jonathan~E Taylor.
\newblock Topological consistency via kernel estimation.
\newblock 2017.

\bibitem{bronstein2021geometric}
Michael~M Bronstein, Joan Bruna, Taco Cohen, and Petar Veli{\v{c}}kovi{\'c}.
\newblock Geometric deep learning: Grids, groups, graphs, geodesics, and gauges.
\newblock {\em arXiv preprint arXiv:2104.13478}, 2021.

\bibitem{bull2020combining}
Joshua~A Bull, Philip~S Macklin, Tom Quaiser, Franziska Braun, Sarah~L Waters, Chris~W Pugh, and Helen~M Byrne.
\newblock Combining multiple spatial statistics enhances the description of immune cell localisation within tumours.
\newblock {\em Scientific reports}, 10(1):18624, 2020.

\bibitem{carlsson2009computing}
Gunnar Carlsson, Gurjeet Singh, and Afra Zomorodian.
\newblock Computing multidimensional persistence.
\newblock In {\em Algorithms and Computation: 20th International Symposium, ISAAC 2009, Honolulu, Hawaii, USA, December 16-18, 2009. Proceedings 20}, pages 730--739. Springer, 2009.

\bibitem{carlsson2007theory}
Gunnar Carlsson and Afra Zomorodian.
\newblock The theory of multidimensional persistence.
\newblock In {\em Proceedings of the twenty-third annual symposium on Computational geometry}, pages 184--193, 2007.

\bibitem{COIFMAN20065}
Ronald~R. Coifman and Stéphane Lafon.
\newblock Diffusion maps.
\newblock {\em Applied and Computational Harmonic Analysis}, 21(1):5--30, 2006.
\newblock Special Issue: Diffusion Maps and Wavelets.

\bibitem{desbrun2005discrete}
Mathieu Desbrun, Anil~N Hirani, Melvin Leok, and Jerrold~E Marsden.
\newblock Discrete exterior calculus.
\newblock {\em arXiv preprint math/0508341}, 2005.

\bibitem{donoho2003hessian}
David~L Donoho and Carrie Grimes.
\newblock Hessian eigenmaps: Locally linear embedding techniques for high-dimensional data.
\newblock {\em Proceedings of the National Academy of Sciences}, 100(10):5591--5596, 2003.

\bibitem{edelsbrunner2002topological}
Edelsbrunner, Letscher, and Zomorodian.
\newblock Topological persistence and simplification.
\newblock {\em Discrete \& computational geometry}, 28:511--533, 2002.

\bibitem{gigli2018nonsmooth}
Nicola Gigli.
\newblock {\em Nonsmooth differential geometry--an approach tailored for spaces with Ricci curvature bounded from below}, volume 251.
\newblock American Mathematical Society, 2018.

\bibitem{gigli2020lectures}
Nicola Gigli, Enrico Pasqualetto, et~al.
\newblock {\em Lectures on nonsmooth differential geometry}.
\newblock Springer, 2020.

\bibitem{grigor2006heat}
Alexander Grigor’yan.
\newblock Heat kernels on weighted manifolds and applications.
\newblock {\em Cont. Math}, 398(2006):93--191, 2006.

\bibitem{harrington2019stratifying}
Heather~A Harrington, Nina Otter, Hal Schenck, and Ulrike Tillmann.
\newblock Stratifying multiparameter persistent homology.
\newblock {\em SIAM Journal on Applied Algebra and Geometry}, 3(3):439--471, 2019.

\bibitem{harris2020array}
Charles~R. Harris, K.~Jarrod Millman, St{\'{e}}fan~J. van~der Walt, Ralf Gommers, Pauli Virtanen, David Cournapeau, Eric Wieser, Julian Taylor, Sebastian Berg, Nathaniel~J. Smith, Robert Kern, Matti Picus, Stephan Hoyer, Marten~H. van Kerkwijk, Matthew Brett, Allan Haldane, Jaime~Fern{\'{a}}ndez del R{\'{i}}o, Mark Wiebe, Pearu Peterson, Pierre G{\'{e}}rard-Marchant, Kevin Sheppard, Tyler Reddy, Warren Weckesser, Hameer Abbasi, Christoph Gohlke, and Travis~E. Oliphant.
\newblock Array programming with {NumPy}.
\newblock {\em Nature}, 585(7825):357--362, September 2020.

\bibitem{kambhatla1997dimension}
Nandakishore Kambhatla and Todd~K Leen.
\newblock Dimension reduction by local principal component analysis.
\newblock {\em Neural computation}, 9(7):1493--1516, 1997.

\bibitem{lim2023hades}
Uzu Lim, Harald Oberhauser, and Vidit Nanda.
\newblock Hades: Fast singularity detection with local measure comparison.
\newblock {\em arXiv preprint arXiv:2311.04171}, 2023.

\bibitem{otter2017roadmap}
Nina Otter, Mason~A Porter, Ulrike Tillmann, Peter Grindrod, and Heather~A Harrington.
\newblock A roadmap for the computation of persistent homology.
\newblock {\em EPJ Data Science}, 6:1--38, 2017.

\bibitem{robins1999towards}
Vanessa Robins.
\newblock Towards computing homology from finite approximations.
\newblock {\em Topology proceedings}, 24(1):503--532, 1999.

\bibitem{shinconfidence}
Jaehyeok Shin, Jisu Kim, Alessandro Rinaldo, and Larry Wasserman.
\newblock Confidence sets for persistent homology of the kde filtration.

\bibitem{singer2012vector}
Amit Singer and H-T Wu.
\newblock Vector diffusion maps and the connection laplacian.
\newblock {\em Communications on pure and applied mathematics}, 65(8):1067--1144, 2012.

\bibitem{stolz2020geometric}
Bernadette~J Stolz, Jared Tanner, Heather~A Harrington, and Vidit Nanda.
\newblock Geometric anomaly detection in data.
\newblock {\em Proceedings of the national academy of sciences}, 117(33):19664--19669, 2020.

\bibitem{sturm2018ricci}
Karl-Theodor Sturm.
\newblock Ricci tensor for diffusion operators and curvature-dimension inequalities under conformal transformations and time changes.
\newblock {\em Journal of Functional Analysis}, 275(4):793--829, 2018.

\bibitem{turkevs2021noise}
Renata Turke{\v{s}}, Jannes Nys, Tim Verdonck, and Steven Latr{\'e}.
\newblock Noise robustness of persistent homology on greyscale images, across filtrations and signatures.
\newblock {\em Plos one}, 16(9):e0257215, 2021.

\bibitem{vipond2020multiparameter}
Oliver Vipond.
\newblock Multiparameter persistence landscapes.
\newblock {\em Journal of Machine Learning Research}, 21(61):1--38, 2020.

\bibitem{Vipond}
Oliver Vipond, Joshua~A. Bull, Philip~S. Macklin, Ulrike Tillmann, Christopher~W. Pugh, Helen~M. Byrne, and Heather~A. Harrington.
\newblock Multiparameter persistent homology landscapes identify immune cell spatial patterns in tumors.
\newblock {\em Proceedings of the National Academy of Sciences}, 118(41):e2102166118, 2021.

\bibitem{von2023topological}
Julius Von~Rohrscheidt and Bastian Rieck.
\newblock Topological singularity detection at multiple scales.
\newblock In {\em International Conference on Machine Learning}, pages 35175--35197. PMLR, 2023.

\bibitem{yang2006robust}
Yong-Liang Yang, Yu-Kun Lai, Shi-Min Hu, Helmut Pottmann, et~al.
\newblock Robust principal curvatures on multiple scales.
\newblock In {\em Symposium on Geometry processing}, pages 223--226, 2006.

\bibitem{zomorodian2004computing}
Afra Zomorodian and Gunnar Carlsson.
\newblock Computing persistent homology.
\newblock In {\em Proceedings of the twentieth annual symposium on Computational geometry}, pages 347--356, 2004.

\end{thebibliography}

\end{document}